\newcommand{\bm}[1]{\mbox{\boldmath~$ #1~$}}
\newtheorem{theorem}{Theorem}[section]
\newtheorem{lemma}[theorem]{Lemma}
\newtheorem{proposition}[theorem]{Proposition}
\newtheorem{corollary}[theorem]{Corollary}
\theoremstyle{definition}
\newtheorem{definition}[theorem]{Definition}
\newtheorem{example}[theorem]{Example}
\newtheorem{problem}[theorem]{Problem}
\theoremstyle{remark}
\newtheorem{remark}[theorem]{Remark}
\renewcommand{\O}{{\mathcal O}}
\newcommand{\Nint}{{\mathbb N}}
\newcommand{\be}{\begin{equation}}
\newcommand{\ee}{\end{equation}}
\newcommand{\sib}{\bar{\si}}
\newcommand{\cB}{{\mathcal B}}
\newcommand{\Z}{{\mathcal Z}}
\newcommand{\nablab}{\bar\nabla}
\newcommand{\cc}{\boldsymbol{c}}
\newcommand{\II}{{\bf\rm I\hspace{-.2mm}I}}
\newcommand{\IIo}{\mathring{{\bf\rm I\hspace{-.2mm} I}}{\hspace{.2mm}}}
\newcommand{\si}{\sigma}
\newcommand{\ba}{\begin{array}}
\newcommand{\ea}{\end{array}}
\newcommand{\beq}{\begin{eqnarray}}
\newcommand{\eeq}{\end{eqnarray}}
\newtheorem{lm}{lemma}
\newtheorem{thee}{theorem}
\newtheorem{proo}{proposition}
\newtheorem{co}{corollary}
\newtheorem{rem}{remark}
\newtheorem{deff}{definition}
\newcommand{\bd}{\begin{deff}}
\newcommand{\ed}{\end{deff}}
\newcommand{\bl}{\begin{lm}}
\newcommand{\el}{\end{lm}}
\newcommand{\bp}{\begin{proo}}
\newcommand{\ep}{\end{proo}}
\newcommand{\bt}{\begin{thee}}
\newcommand{\et}{\end{thee}}
\newcommand{\bc}{\begin{co}}
\newcommand{\ec}{\end{co}}
\newcommand{\brm}{\begin{rem}}
\newcommand{\erm}{\end{rem}}
\newcommand{\F}{\overline{F}}
\def\frak{\mathfrak}
\def\Cal{\mathcal}
\newcommand{\newc}{\newcommand}
\let\ccdot\cdot
\def\cdot{\hbox to 2.5pt{\hss$\ccdot$\hss}}
\newc{\aR}{\mbox{\boldmath{$ R$}}}
\newc{\aS}{\mbox{\boldmath{$ S$}}}
\newc{\aT}{\mbox{\boldmath{$ T$}}}
\newc{\aW}{\mbox{\boldmath{$ W$}}}
\newc{\aD}{\mbox{\boldmath{$ D$}}\hspace{-.2mm}}
\newc{\aK}{\mbox{\boldmath{$ K$}}}
\newc{\aL}{\mbox{\boldmath{$ L$}}}
\newcommand{\ce}{{\Cal E}}
\newcommand{\ct}{{\Cal T}}
\newcommand{\hD}{\widehat{D}}
\newcommand{\nd}{\nabla}
\newcommand{\Rho}{{\rm P}}
\newcommand{\Ric}{\operatorname{Ric}}
\newcommand{\Sc}{\operatorname{Sc}}
\newcommand{\cT}{{\mathcal T}}
\newcommand{\nn}[1]{(\ref{#1})}
\newcommand{\bg}{\mbox{\boldmath{$ g$}}}
\newcommand{\J}{{\rm J}}
\newc{\obstrn}[2]{B^{#1}_{#2}}
\newcommand{\rpl}                         
{\mbox{$
\begin{picture}(12.7,8)(-.5,-1)
\put(0,0.2){$+$}
\put(4.2,2.8){\oval(8,8)[r]}
\end{picture}$}}
\newcommand{\lpl}                         
{\mbox{$
\begin{picture}(12.7,8)(-.5,-1)
\put(2,0.2){$+$}
\put(6.2,2.8){\oval(8,8)[l]}
\end{picture}$}}
\newc{\tensor}[1]{#1}
\newc{\Mvariable}[1]{\mbox{#1}}
\newc{\down}[1]{{}_{#1}}
\newc{\up}[1]{{}^{#1}}
\newc{\JulyStrut}{\rule{0mm}{6mm}}
\newc{\midtenPan}{\mbox{\sf S}}
\newc{\midten}{\mbox{\sf T}}
\newc{\midtenEi}{\mbox{\sf U}}
\newc{\ATen}{\mbox{\sf E}}
\newc{\BTen}{\mbox{\sf F}}
\newc{\CTen}{\mbox{\sf G}}
\newcommand{\w}{\mbox{\bf w}} 
\def\sideremark#1{\ifvmode\leavevmode\fi\vadjust{\vbox to0pt{\vss
 \hbox to 0pt{\hskip\hsize\hskip1em
 \vbox{\hsize2cm\tiny\raggedright\pretolerance10000
  \noindent #1\hfill}\hss}\vbox to8pt{\vfil}\vss}}}
\numberwithin{equation}{section}
\newcommand\extd{{\bm d}}
\newcommand\RR{{\mathscr R}}
\renewcommand\F{{\mathcal F}}
\newcommand{\B}{\mathcal B}
\newcommand{\Pop}{{\sf P}}
\begin{document}

\renewcommand{\today}{} \title{A Calculus for Conformal Hypersurfaces
  and new higher Willmore energy functionals} \author{A. Rod
  Gover${}^{\mathfrak G}$ \& Andrew Waldron${}^{\mathfrak W}$}

\address{${}^{\mathfrak G}$Department of Mathematics\\
  The University of Auckland\\
  Private Bag 92019\\
  Auckland 1142\\
  New Zealand} \email{r.gover@auckland.ac.nz}
  
  \address{${}^{\mathfrak W}$Department of Mathematics\\
  University of California\\
  Davis, CA95616, USA} \email{wally@math.ucdavis.edu}

\vspace{10pt}

\renewcommand{\arraystretch}{1}

\begin{abstract} 

The invariant theory for conformal hypersurfaces is studied by
treating these as the conformal infinity of a conformally compact
manifold: For a given conformal hypersurface embedding, a distinguished ambient
metric is found (within its conformal class)  by
solving a singular version of the Yamabe problem.
Using existence results for asymptotic solutions to this problem, we
develop the details of how to proliferate conformal hypersurface
invariants. In addition we show how to compute the
the solution's asymptotics. We also develop a calculus of conformal
hypersurface invariant differential operators
and in particular,
describe how to compute extrinsically coupled analogues of conformal Laplacian
powers.  Our methods also enable the study of integrated conformal
hypersurface invariants and their functional variations. As a main
application we develop new higher dimensional analogues of the Willmore
energy  for embedded surfaces. This complements recent
progress on the existence and construction of such functionals.

\vspace{2cm}

\noindent
{\sf \tiny Keywords:  
Conformally compact, conformal geometry, holography, hypersurfaces, Willmore energy,  Yamabe~problem.}

\end{abstract}
\subjclass[2010]{Primary 53A30, 53A55, 53C21 ; Secondary 53B15}

\maketitle

\pagestyle{myheadings} \markboth{Gover \& Waldron}{Conformal Hypersurface Calculus and Functionals}

\newpage

\tableofcontents
\newpage

\section{Introduction}

The data for our study is a $d$-dimensional Riemannian manifold $(M,g)$ equipped with a smoothly embedded, for simplicity oriented, codimension~1 submanifold~$\Sigma$, commonly termed a {\it hypersurface}:

\begin{center}
\includegraphics[scale=.35]{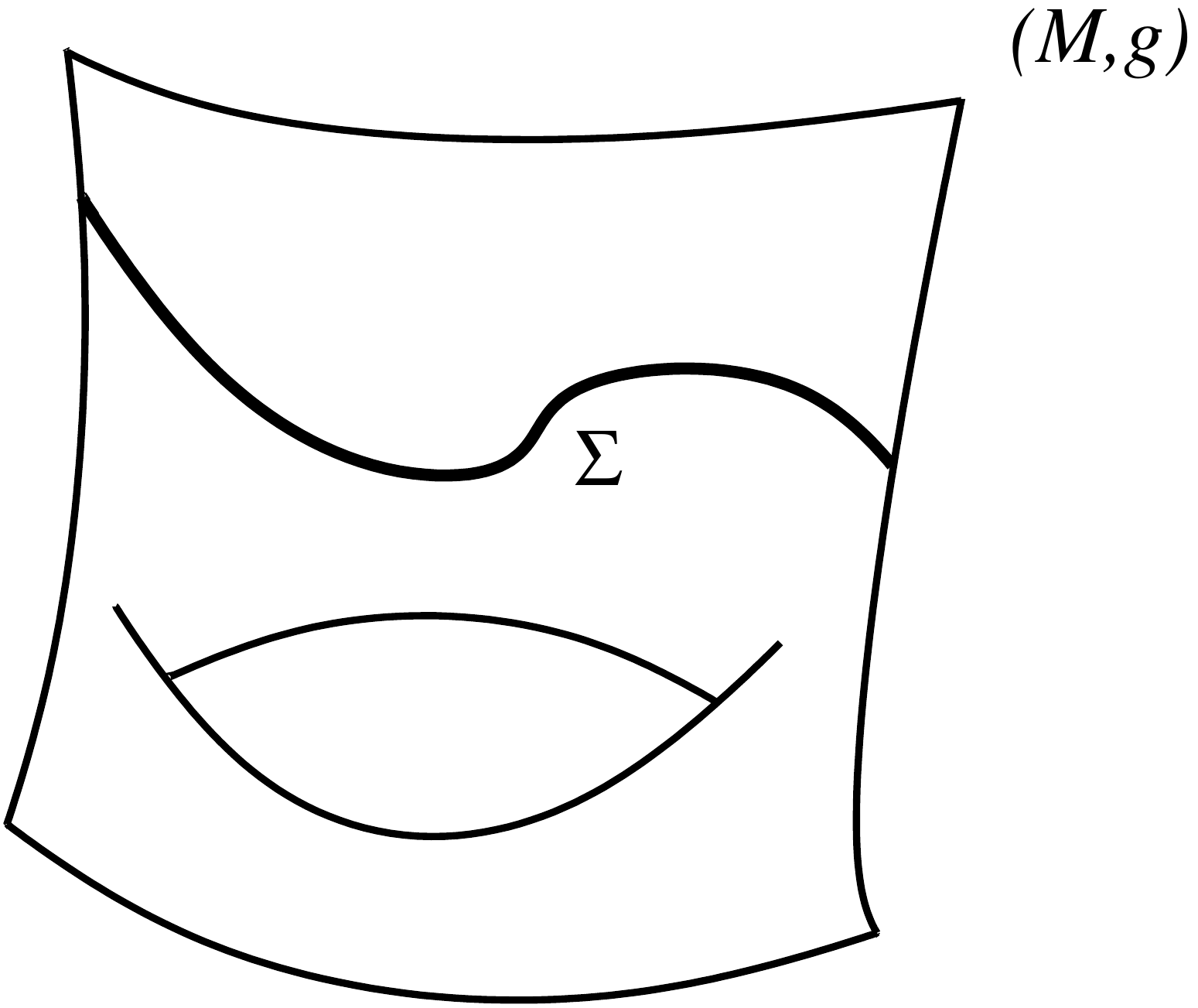}
\end{center}

\noindent
Our aim is to develop a calculus for the study of conformal
hypersurfaces including the natural invariant differential operators
associated with these and {\em conformal hypersurface invariants}.
The latter are natural density-valued tensor fields defined along~$\Sigma$ and determined by the data $(M,g,\Sigma)$, such that, as
densities, they are unchanged when $g$ is replaced by a conformally
related metric~$\Omega^2 g$ where $\Omega$ is a positive
function. 
Among such invariants there are some distinguished
invariants \cite{GW15} that, in a precise sense, provide higher
dimensional analogues of the celebrated Willmore equation studied in
{\it e.g.}~\cite{Marques,Willmore}. Recently 
energy functionals for these objects 
have been constructed from 
conformal anomalies in
a renormalised volume expansion~\cite{Gra} (see also
\cite{GW16}).
 A second main aim here is to apply tools developed in
\cite{Us,GW15} to provide a construction of manifestly  conformally
invariant energies with the same leading order functional gradient (with
respect to variation of embedding) as the anomaly functionals. 
Not only do these new
energies yield
alternative conformally
invariant higher Willmore equation, the
 nature of these suggests they
will also be useful for analysing
and even altering the
functionals in~\cite{GW16,Gra}.
Alterations may be
useful because the positivity of these higher ``energies'' is not
established.  It is also shown in \cite{GW16} that these global invariants
are related to a notion of $Q$-curvature for conformal hypersurfaces.

It is by now well-established that aspects of the {\it intrinsic}
conformal geometry of a hypersurface~$\Sigma$ can be effectively
treated by taking, at least in some collar neighborhood of~$\Sigma$,
the bulk metric $g$ to be the Poincar\'e--Einstein metric of
Fefferman--Graham (FG)~\cite{FGast}. This amounts to solving
Einstein's equations for metrics that are singular
along~$\Sigma$. Unfortunately this approach is not suitable for a
study of hypersurface invariants since it essentially forces the
embedding of $\Sigma$ to be {\em totally
  umbilic}~\cite{LeB-Heaven,Goal}, {\it i.e.}, everywhere vanishing
trace-free second fundamental form. However, in a companion
paper~\cite{GW15}, we showed that the singular Yamabe problem provides
exactly the right weakening of the Poincar\'e--Einstein condition to
yield a powerful ``holographic'' framework for the study of conformal
hypersurface invariants.

\begin{problem}[Singular Yamabe]\label{conffirststep}
Given an oriented hypersurface $\Sigma$, find a smooth function $\sigma$ such that
\begin{enumerate}[(i)]
\item $\sigma$ is a {\it defining function} for $\Sigma$ (so $\Sigma$ is the zero locus~${\mathcal Z}(\sigma)$ and $\!\extd \!\! \sigma \neq 0$ along~$\Sigma$); and
\item the singular metric $g^o=g/\sigma^2$ has scalar curvature $\Sc^{g^o}=-d(d-1)$.
\end{enumerate}
\end{problem}
\noindent
The second part of this problem is governed by the non-linear pde
\begin{equation}\label{Sgsigma}
S(g, \sigma):= |\!\extd\!\! \sigma|_g^2 - \frac 2d\, \sigma \big[\Delta^g + \frac{\Sc^g}{2(d-1)}\big] \sigma = 1\, .
\end{equation}
Here $\!\!\extd\!$ is the exterior derivative and $\Delta^g$ is the (negative energy) Laplacian.
Clearly, since the metric-defining function pair $(\Omega^2 g, \Omega \sigma)$ define the  same singular metric~$g^o$, the above equation is conformally invariant; $S(\Omega^2 g, \Omega \sigma) = S(g,\sigma)$. Therefore the above problem can be treated using conformal geometry.

\subsection{Elements of tractor calculus and the singular Yamabe problem}

A key tool for studying problems in conformal geometry is the tractor calculus 
 of~\cite{BEG} (see also \cite{GoPetCMP}). The standard
tractor bundle and its connection are  equivalent to
the normal conformal Cartan connection~\cite{CapGoirred,CapGoTAMS}, and are related to objects first
developed by Thomas~\cite{Thomas}.

Recall that 
a conformal structure~$\cc$ is an equivalence class of Riemannian
metrics where any two metrics~$g,g'\in \cc$ are related by a {\it conformal rescaling}; that is~$g'=\Omega^2 g$ with $C^\infty M\ni \Omega>0$.
Locally each~$g \in \cc$
determines a volume form and, squaring this, a section of
$(\Lambda^d T^*M)^2$. So, on a conformal manifold~$(M,\boldsymbol{c})$ there is a
canonical section~$\bg$ of~$\odot^2T^* M\otimes \ce M[2]$ called the
{\it conformal metric}.
Here~$\ce M[w]$, for any~$w\in {\mathbb R}$, denotes the
conformal density bundle.
This is the natural (oriented) line bundle 
equivalent, via the conformal structure~$\cc$, to~$\big[(\wedge^d TM)^2\big]^{\frac{w}{2d}}$.

On a conformal manifold~$(M,\cc)$, there is no distinguished connection on the tangent bundle
$TM$. However there is a canonical {\it tractor metric}~$h$ and linear connection
$\nabla^{\ct}$ (preserving~$h$; the superscript $\cT$ will often be supressed) on a related higher rank vector bundle known as
the tractor bundle~${\mathcal T}M$, which  yields a simplified treatment of  Problem~\ref{conffirststep}.
The tractor bundle~$\ct M$ is not
irreducible but has a  composition series summarised via
a semi-direct sum notation $$ \cT M= \ce M[1]\lpl
T^* M[1]\lpl\ce M[-1]\, .$$
Here $T^* M[w]:=T^*M\otimes \ce M[w]$.  A choice of metric~$g \in \cc$, or equivalently  a nowhere vanishing section $\tau$ of $\ce M[1]$ by setting $g=\tau^{-2}\bg$, determines an
isomorphism
\begin{equation*}\label{split}
\mathcal{T}M \stackrel{g}{\cong} \ce M[1]\oplus T^*\!M[1]\oplus
\ce M[-1] ~.
\end{equation*}
Computations relying on this isomorphism will be referred to as ``working in a scale'' and the section $\tau$ is called a {\it true scale} (later the term {\it scale} will be used for more  general sections of $\ce M[1]$).
We will employ an abstract index notation both for sections of tensor bundles in general and for sections $V^A$ of $\ct M$, and thus write $V^A\stackrel g= (v^+,v_a,v^-)=:[V^A]_g$ to denote the image of $V^A$ under the above isomorphism. We denote $h(V,V)$ by $V^2$, and in this scale the squared length of $V$ with respect to the tractor metric is given by \begin{equation}\label{trmet}V^2\stackrel g= 2v^+ v^- + g_{ab} v^a v^b\, .\end{equation}

It is propitious to reformulate the  notion of a defining function in terms of  densities:
A section~$\sigma$ of $\ce M[1]$ 
is said to be a {\it defining density} for a hypersurface~$\Sigma$ if~$\Sigma=\Z(\sigma)$ and~$\nabla \sigma$ is nowhere vanishing along~$\Sigma$ where $\nabla$ is the Levi-Civita connection for some, equivalently  any,  $g\in\cc$.
For a defining density~$\sigma$, we may define a corresponding {\it scale tractor}
\begin{equation*}\label{sctrac-def}
\cT M\ni I^A_\sigma
\stackrel{g}{:=} (\sigma,\nabla_a \si,-\frac{1}{d}(\Delta^g +\J)\si)=:\hD^A \sigma\, .
\end{equation*}
Here $\nabla^g$ is the Levi-Civita connection of $g$ and $\Delta^g$ its Laplacian, while $\J:=-\Sc^g/(d(d-1))$.
In  Riemannian signature, it follows immediately that
 for any defining density~$\sigma$ we have that
\begin{equation*}\label{ge0}
I_\sigma^2>0
\end{equation*}
holds in a neighbourhood of~$\Sigma$.
(We will implicitly use this fact in formul\ae\ involving the reciprocal function $1/I^2$.)
Moreover, 
$$
I^2_\sigma\stackrel g = S(g,\sigma)\, .
$$
In words, the singular Yamabe Problem~\ref{conffirststep} amounts to finding a defining density whose scale tractor has squared length equalling unity. 

It is worthwhile observing that any FG Poincar\'e--Einstein metric $g^o$ solves the singular Yamabe problem. However, for general boundary conformal geometries, the problem of finding a smooth FG Poincar\'e--Einstein metric is obstructed, and a similar statement holds for the singular Yamabe problem~\cite{ACF}. Therefore, we formulate an asymptotic version of Problem~\ref{conffirststep}:

\begin{problem}\label{I2-prob} 
Find a smooth defining density~$ \sigma$ such that
\begin{equation}\label{ind}
I^2_{\si}=1 + {\sigma}^{\ell} A_\ell\, ,
\end{equation}
for some smooth~$A_\ell\in \Gamma(\ce M[-\ell])$,
where~$\ell \in\mathbb{N}\cup\infty$ is as high as possible.
\end{problem}

Building on the foundational work~\cite{ACF}, a solution to this
problem was given in~\cite{GW15}:
\begin{theorem}
\label{BIGTHE}
\label{prod} 
Given a defining density~$\sigma_0$, there exists an improved defining
density \begin{equation}\label{expansion}{\sigma}={\hat
    \sigma}\big(1+{ \alpha_1}\hat \sigma + \cdots +{
    \alpha_{d-1}}{\hat \sigma}^{d-1}\big)\, ,\end{equation} where
$\hat \sigma= \sigma_0/{\sqrt{ I^2_{\sigma_0}}}$ in a neighborhood of
$\Sigma$, and ${ \alpha}_k$ are smooth densities, such that
\begin{equation}\label{one}
I^2_\sigma=1+{ \sigma}^d B\, .
\end{equation}
Moreover, the restriction of the weight~$w=-d$ density $B$
to the hypersurface~$\Sigma={\mathcal Z}(\sigma)$, denoted ${\mathcal B}:=B|_\Sigma$ and termed the {\it ``obstruction density''}, is  a natural 
 conformal hypersurface invariant 
which depends only on the data of the conformal embedding~$\Sigma\hookrightarrow (M,\cc)$.
\end{theorem}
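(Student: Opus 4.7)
The plan is an order-by-order bootstrap: starting from a convenient normalization of $\sigma_0$, I would iteratively multiply by factors of the form $(1+\alpha_\ell \sigma^\ell)$ to kill successive terms in the Taylor expansion of $I^2-1$ along $\Sigma$. The whole argument is driven by a single algebraic indicial factor whose vanishing at one specific order simultaneously produces the obstruction density and guarantees its conformal invariance.

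First I would normalize by setting $\hat\sigma := \sigma_0/\sqrt{I^2_{\sigma_0}}$, which is smooth in a neighbourhood of $\Sigma$ since $I^2_{\sigma_0}>0$ there, and which satisfies $I^2_{\hat\sigma}|_\Sigma=1$; hence $I^2_{\hat\sigma}=1+\hat\sigma\, A_1$ for some smooth weight $-1$ density $A_1$. The engine of the induction is then the following indicial identity, obtained by direct substitution into \eqref{Sgsigma}: if $\sigma$ is a defining density with $I^2_\sigma = 1 + \sigma^\ell A_\ell + O(\sigma^{\ell+1})$ and $\alpha$ is any density of weight $-\ell$, then
\[
I^2_{\sigma(1+\alpha\sigma^\ell)} \;=\; 1 + \sigma^\ell\!\left(A_\ell + \frac{2(\ell+1)(d-\ell)}{d}\,\alpha\right)\!\Big|_\Sigma + O(\sigma^{\ell+1}).
\]
The prefactor $(\ell+1)(d-\ell)$ emerges from combining contributions of $|\extd\sigma|_g^2$ and of $\sigma(\Delta^g+\J)\sigma$ under the multiplicative perturbation, together with the normalization $|\extd\sigma|_g^2|_\Sigma = 1$; crucially it is nonzero precisely for $\ell\in\{1,\ldots,d-1\}$.

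Granted this identity, for each $\ell\in\{1,\ldots,d-1\}$ the equation uniquely determines $\alpha_\ell|_\Sigma = -\tfrac{d\,A_\ell|_\Sigma}{2(\ell+1)(d-\ell)}$; extending to a smooth density on a neighbourhood and replacing $\sigma$ by $\sigma(1+\alpha_\ell\sigma^\ell)$ kills the offending $\sigma^\ell$ term and advances the expansion to order $\ell+1$. Iterating from $\ell=1$ through $\ell=d-1$ and expanding the nested product into a polynomial of degree $d-1$ in $\hat\sigma$ produces the form \eqref{expansion}, and by construction the resulting $\sigma$ satisfies \eqref{one} for some smooth weight $-d$ density $B$.

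It remains to show that $\mathcal B := B|_\Sigma$ depends only on the conformal embedding. Conformal invariance is built into the tractor formalism, since $I^A_\sigma$ and its squared length transform as genuine tractor objects, so the entire construction uses only the conformal class $\cc$. Independence from the residual choices (the initial $\sigma_0$ and the smooth extensions of each $\alpha_\ell$ off $\Sigma$) reduces to the indicial identity at the borderline order $\ell=d$: any two improved densities produced by the algorithm must agree in their $\alpha_\ell|_\Sigma$ for $\ell<d$ by the uniqueness just established, hence differ by a factor $1+\sigma^d \phi+\cdots$, and the coefficient of $\phi$ in the indicial identity with $\ell=d$ vanishes, so $I^2$ is unchanged at order $\sigma^d$. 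The principal technical hurdle is the indicial computation itself---extracting the precise factor $(\ell+1)(d-\ell)/d$ requires careful bookkeeping of the $\sigma$-expansion of each term in $I^2_\sigma$ under the perturbation---while the subtler conceptual point is the invariance argument, which exploits the very vanishing responsible for the obstruction.
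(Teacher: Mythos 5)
The statement you are proving is quoted in the present paper as a result of the companion work \cite{GW15} (built on \cite{ACF}); there is no proof of it here, so I can only assess your reconstruction on its own merits and against what is implied by the surrounding text. Your argument is correct and is essentially the standard Fefferman--Graham-type formal power-series scheme, and it matches the mechanism the paper alludes to (an indicial factor that vanishes precisely at the obstruction order).

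Your indicial identity checks out: writing $\tilde\sigma=\sigma(1+\alpha\sigma^\ell)$ and expanding $I^2_{\tilde\sigma}=|\nabla\tilde\sigma|^2-\tfrac2d\tilde\sigma(\Delta+\J)\tilde\sigma$ one finds
\[
I^2_{\tilde\sigma}=I^2_\sigma+2(\ell+1)\alpha\sigma^\ell-\tfrac{2}{d}\,\ell(\ell+1)\alpha\sigma^\ell+O(\sigma^{\ell+1})
=1+\sigma^\ell\Bigl(A_\ell+\tfrac{2(\ell+1)(d-\ell)}{d}\alpha\Bigr)+O(\sigma^{\ell+1}),
\]
where the $|\nabla\sigma|^2$ coming from $\Delta(\sigma^{\ell+1})$ is $1+O(\sigma)$ by the induction hypothesis. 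The factor $(\ell+1)(d-\ell)$ is nonzero for $\ell=1,\ldots,d-1$ and vanishes at $\ell=d$, exactly as you use it. Your normalization step ($I^2_{\hat\sigma}|_\Sigma=1$) and the observation that only $\alpha_\ell|_\Sigma$ is forced while the extension is free are both right. Two places worth tightening, though neither is a genuine gap: (i) the parenthetical ``$|_\Sigma$'' in your indicial identity is misleading since the whole display is an off-$\Sigma$ expansion --- what is true is that only the restriction of the coefficient matters for the next order; (ii) the phrase ``must agree in their $\alpha_\ell|_\Sigma$ for $\ell<d$ by the uniqueness just established'' is slightly loose because the $A_\ell$ depend on earlier extension choices. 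The clean version of the uniqueness step is an induction showing that if $\sigma$ and $\sigma'$ are two outputs then $\sigma'=\sigma(1+\beta_1\sigma+\cdots)$ with $\beta_k|_\Sigma=0$ for $k<d$, so after re-absorbing, $\sigma'=\sigma\bigl(1+O(\sigma^d)\bigr)$; then the $\ell=d$ indicial identity gives $I^2_{\sigma'}-I^2_\sigma=O(\sigma^{d+1})$ and hence the same $B|_\Sigma$. Finally, you do not explicitly remark that the construction is diffeomorphism-natural (Definition~\ref{R-invtdef}(i)); this is immediate since every ingredient is natural, but for a complete proof of the ``natural conformal hypersurface invariant'' claim it should be stated. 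Where the companion paper \cite{GW15} phrases the recursion in terms of the Laplace--Robin operator $I\cdot D$ and the $\mathfrak{sl}(2)$ relations \nn{sl2}, you work directly in a chosen scale; the two are equivalent, and your version is perhaps more elementary while theirs is more convenient for extracting the explicit coefficients used later.
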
 

The improved defining density $\sigma$ of the theorem is unique modulo
the addition of terms of order $\sigma^{d+1}$ and any such defining
density is termed a {\it conformal unit defining density}.  Sections
of conformal (possibly tensor-valued) density bundles expressible in a
choice of scale in terms of the metric and polynomials built from jets
of $\sigma$ are termed termed {\it coupled conformal invariants}
(see~\cite[Section 6.1]{GW15} for a precise definition).  The
existence of conformal unit defining densities allows us to
proliferate conformal hypersurface invariants as encapsulated by the
following theorem:

\begin{theorem}[See \cite{GW15}]
Suppose that $\sigma$ is a conformal unit defining density and $P(\cc,\sigma)$  is a weight~$w$ coupled conformal invariant  depending pointwise on at most the $d$-jet of $\sigma$. Then the restriction of $P$ to $\Sigma$ is a weight~$w$ conformal hypersurface invariant.
\end{theorem}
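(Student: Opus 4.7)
The plan is to exploit the uniqueness clause of Theorem~\ref{BIGTHE}, which asserts that any two conformal unit defining densities $\sigma$ and $\tilde\sigma$ for the same hypersurface $\Sigma\hookrightarrow (M,\cc)$ differ by a term of order $\sigma^{d+1}$, and to combine this with the hypothesis that $P$ depends pointwise on no more than the $d$-jet of its defining-density argument. The conformal invariance of $P|_\Sigma$ (as a density along $\Sigma$) will then follow from the fact that $P(\cc,\sigma)$ is a coupled conformal invariant, while the $(\cc,\Sigma)$-dependence alone will follow from jet considerations.

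First I would fix the conformal class $\cc$ and let $\sigma,\tilde\sigma$ be two conformal unit defining densities for $\Sigma$. By Theorem~\ref{BIGTHE} and the uniqueness statement following it, we may write
\begin{equation*}
\tilde\sigma=\sigma+\sigma^{d+1}\rho
\end{equation*}
for some smooth density $\rho\in\Gamma(\ce M[-d])$. Working in any scale $g\in\cc$, a direct application of the Leibniz rule shows that every partial derivative of $\sigma^{d+1}\rho$ of order at most $d$ vanishes along $\Z(\sigma)=\Sigma$; equivalently, the $d$-jet of $\tilde\sigma-\sigma$ along $\Sigma$ is zero. Consequently the $d$-jets along $\Sigma$ of $\sigma$ and $\tilde\sigma$ coincide.

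Since $P(\cc,\sigma)$ depends pointwise on at most the $d$-jet of $\sigma$, the previous step yields
\begin{equation*}
P(\cc,\sigma)\big|_\Sigma=P(\cc,\tilde\sigma)\big|_\Sigma\, .
\end{equation*}
Thus $P|_\Sigma$ is independent of the choice of conformal unit defining density, and so depends only on the pair $(\cc,\Sigma)$. That $P(\cc,\sigma)$ is a coupled conformal invariant of weight $w$ means, by definition, that its value in any scale $g\in\cc$ transforms as a section of the appropriate density bundle under $g\mapsto \Omega^2 g$ with $\sigma$ fixed; in particular the restriction to $\Sigma$ transforms as a weight-$w$ density along $\Sigma$.

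The only place where nontrivial content enters is the first step, namely the uniqueness statement following Theorem~\ref{BIGTHE}: once one accepts that two conformal unit defining densities agree modulo $O(\sigma^{d+1})$, the remainder of the argument is automatic from the definition of a coupled conformal invariant and elementary Taylor analysis of the defect $\sigma^{d+1}\rho$. I would therefore expect the main obstacle to lie in rigorously justifying this uniqueness modulo $\sigma^{d+1}$, which amounts to a careful induction on the order of the asymptotic expansion in~\eqref{expansion} and is encapsulated in the proof of Theorem~\ref{BIGTHE} itself.
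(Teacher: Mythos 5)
Your proof is correct, and it follows the natural line of argument that the paper intends (the theorem is attributed to~\cite{GW15}, so the present paper does not spell out the proof, but the uniqueness-modulo-$O(\sigma^{d+1})$ clause following Theorem~\ref{BIGTHE} is precisely the mechanism one combines with the $d$-jet hypothesis to conclude). Your identification of the uniqueness statement as the one nontrivial ingredient, with the remainder an elementary jet/Leibniz computation plus the definition of a coupled conformal invariant, matches the structure of the argument expected here.
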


Application of  this theorem requires the construction of the needed coupled
conformal invariants.  A main direction of this paper is to explain
how to systematically produce these by the application of tractor calculus.

Another main outcome of our approach is the construction of invariant differential operators determined by the conformal embedding. Notable among these are the extrinsically coupled conformal Laplacian powers $\Pop_{k}$ of~\cite{GW15}; for $k$ even these take the form~$\Delta^{k/2}$ plus lower order curvature terms and 
generalise the Laplacian powers of~\cite{GJMS}. An application of these is the construction of scalar invariants 
that cannot be directly reached from the 
above theorem. In particular, we can produce invariants of the weight $w=1-d$ 
that allows them to be integrated over a  hypersurface. This exploited in the following result: 

\begin{theorem}\label{kin}
Given a closed embedded hypersurface~$\Sigma$ in $M$, 
the functional
\begin{equation}\label{kf}
\int_\Sigma N^A \Pop_{d-1} N_A 
\end{equation}
 is a conformal invariant of~$\Sigma$.  With respect to variation of
 the embedding, the gradient of this functional is a
 conformal hypersurface invariant. For even dimensional hypersurfaces
 this is  a conformal hypersurface invariant with
 linear leading term in agreement with the obstruction density.
\end{theorem}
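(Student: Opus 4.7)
The plan is to prove the three claims in sequence, leveraging the conformal invariance of both the normal tractor $N_A$ and the operator $\Pop_{d-1}$, and then identifying the leading linear-in-extrinsic-curvature part of the gradient.

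For conformal invariance of the integral, I would appeal to two facts: first, that the normal tractor $N_A$ is a canonical conformally invariant section of $\ct M|_\Sigma$ of weight $0$, determined by the conformal embedding; and second, that $\Pop_{d-1}$, being an extrinsically coupled conformal Laplacian power of order $d-1$ constructed in \cite{GW15}, is a conformally invariant differential operator whose image has weight $(d-1)$ less than its domain. The contraction $N^A \Pop_{d-1} N_A$ is therefore a conformally invariant density of weight $-(d-1)$ along the closed $(d-1)$-dimensional submanifold $\Sigma$, which is precisely the weight required for invariant integration.

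For the second claim, parametrise a normal variation of the embedding by a weight-$1$ density $v$ on $\Sigma$, using the canonical conformal identification of the normal bundle with $\ce M[1]|_\Sigma$. Writing
\[
\delta_v \int_\Sigma N^A \Pop_{d-1} N_A \;=\; \int_\Sigma \cG\, v,
\]
conformal invariance of the functional and of the pairing, together with arbitrariness of $v$, forces $\cG$ to be a conformal hypersurface invariant; a weight check gives $\cG \in \Gamma(\ce\Sigma[-d])$, matching the weight of $\cB$. Explicitly, using (formal) self-adjointness of $\Pop_{d-1}$, the gradient takes the schematic form $\cG = 2 (\Pop_{d-1} N^A)(\partial_v N_A) + (\delta_v \Pop_{d-1})(N,N) + (\text{volume variation})$, with each piece computable by the tractor calculus developed in the preceding sections.

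For the leading linear term when $\dim \Sigma$ is even, here $d-1$ is even, so by \cite{GW15} the principal part of $\Pop_{d-1}$ is a nonzero constant multiple of $\Delta^{(d-1)/2}$. Since $\cG$ has weight $-d$ and must reduce for near-umbilic embeddings to a universal expression in the mean curvature $H$ (of weight $-1$) and its derivatives, dimensional/weight counting forces the linear-in-$H$ top-derivative part of $\cG$ to be a constant multiple of $\Delta_\Sigma^{(d-1)/2} H$. Expanding the normal tractor $N_A = (0, n_a, -H) + O(\II_\circ)$ in a scale and carrying out the variation shows that this constant is nonzero. On the other hand, expansion of the singular Yamabe equation of Theorem~\ref{prod} about an umbilic configuration, exactly along the lines of the iterative solution underlying $\hat\sigma$ and $\alpha_k$, yields that the linear-in-$H$ part of $\cB$ is also a constant multiple of $\Delta_\Sigma^{(d-1)/2} H$ — essentially a GJMS-type computation for the intrinsic $(d-1)$-dimensional conformal structure. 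Matching of the two constants (up to a universal normalisation fixed by the definition of $\Pop_{d-1}$) completes the identification.

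The main obstacle is the final matching: the variational computation of $\cG$ from $N^A \Pop_{d-1} N_A$ and the direct computation of the linear part of $\cB$ from the singular Yamabe expansion are a priori independent calculations that must be aligned numerically. Tracking all top-derivative contributions through the tractor connection (including cross-slot couplings when $\Pop_{d-1}$ acts on $N_A$) is delicate; the scale-tractor formalism and the conformal unit defining density of Theorem~\ref{prod} make both computations tractable in parallel, but the bookkeeping of coefficients is the crux of the argument.
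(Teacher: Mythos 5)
Your handling of the first two claims is essentially the paper's own argument and is fine. The gap is in the third claim. You go from ``the principal part of $\Pop_{d-1}$ is a nonzero multiple of $\Delta^{(d-1)/2}$'' to the conclusion via ``weight counting forces the linear-in-$H$ top-derivative part of $\cG$ to be a constant multiple of $\bar\Delta^{(d-1)/2}H$, and the constant is nonzero.'' Two problems. First, ``linear in $H$'' is not a conformally meaningful notion --- $H$ itself is not a conformal hypersurface invariant --- so a weight-counting argument framed this way cannot rigorously isolate a leading term. The conformally meaningful quantity is the linear-in-$\IIo$ part of the Euler--Lagrange equation, which the paper computes and only afterward converts to $\bar\Delta^{(d-1)/2}H$ via the traced Codazzi--Mainardi identity~\nn{boxH}. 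Second and more fundamentally, you have no mechanism to control the \emph{subleading} terms of $\Pop_{d-1}$: nothing in your argument excludes a competing quadratic-in-$\IIo$ contribution of the same derivative order coming from a lower-order piece of the operator, which could cancel or alter the leading variation. The paper closes this with a structural input you never invoke: Proposition~7.3 of~\cite{GW15} gives the tangential factorisation $\Pop_{d-1}=\mathcal{G}^b\circ\nabla^\top_b$, whence
\begin{equation*}
N^A\Pop_{d-1}N_A\ \stackrel\Sigma=\ [N^A,\mathcal{G}^b]\,\nabla^\top_b N_A
\end{equation*}
since $N^A\nabla^\top_b N_A\stackrel\Sigma=0$. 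Together with Corollary~\ref{gradI}, which shows $\nabla^\top_b N_A$ is linear in curvature, this forces every subleading (already curvature-linear) piece of $\mathcal{G}^b$ to contribute at least cubically, so only the leading derivative term survives at quadratic order, pinning the leading integrand to $\IIo^{ab}\bar\Delta^{(d-3)/2}\IIo_{ab}$. Without the factorisation your argument cannot reach that conclusion.

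You also misidentify what must be proved: the ``main obstacle'' you flag --- a numerical matching of constants between $\cG$ and $\cB$ --- is not what the theorem asserts. The claim is agreement of the leading derivative structure $\bar\Delta^{(d-1)/2}H$. The paper gets this by explicitly varying $\int_\Sigma\IIo^{ab}\bar\Delta^{(d-3)/2}\IIo_{ab}$, observing that at linear order in $\IIo$ only the explicit $\IIo$ factors contribute (the variation of the measure and of $\bar\Delta^{(d-3)/2}$ being already quadratic), so the gradient is $2\nablab^a\nablab^b\bar\Delta^{(d-3)/2}\IIo_{ab}$, and then applying~\nn{boxH} and comparing with Theorem~5.1 of~\cite{GW15} for the leading term of $\cB$. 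Exact coefficient agreement is a separate, stronger statement checked in Example~\ref{surfc} and in~\cite{GGHW}, not part of this theorem.
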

\noindent
Here $N_A$ is the hypersurface normal tractor of \cite{BEG},
see Equation \nn{normaltractor}.

 The last statement of 
Theorem~\ref{kin} shows that for $\Sigma$ of even dimension, these
energy functionals are genuinely quadratic at leading order, meaning
that their variational gradients are linear at linear leading order.
Thus these gradients also provide higher dimensional analogues of the
Willmore invariant and this proves that the functionals are {\em higher dimensional analogues of the Willmore energy}.  Exact agreement (not just leading order) between
the gradient and the obstruction density is verified for surfaces in
Example \ref{surfc} and for 3-dimensional hypersurfaces in
\cite{GGHW}.   Physically, these functionals (in
both dimension parities) are candidate actions for rigid membrane
dynamics.

\subsection{Structure of the article}\label{str}

Apart from the new results established here, this paper is
strongly linked to~\cite{GW15}. In one direction, an objective here is
to show how the formalism introduced in \cite{GW15} gives an effective
calculus for the computation and treatment conformal hypersurface
invariants. In the other direction, many of the results in \cite{GW15}
can only be fully appreciated and exploited when reinterpreted in
terms of basic Riemannian geometry formulae; producing these involves
considerable subtlety, and so a second objective is illustrate how
such formulae may be extracted.
  
  In Section~\ref{Riemann}, we review the
theory of Riemannian hypersurface invariants, and show how these may
be treated via a Riemannian analog of the singular Yamabe problem. In
Section~\ref{canda} we show how existence of conformal unit defining
densities alone allows us to proliferate conformal hypersurface
invariants. As an application, we compute the obstruction density in
low dimensions. Then in Section~\ref{hollow} we develop the tractor
calculus of conformal hypersurface invariants. This allows powerful
tractor techniques to be applied to these
problems. Section~\ref{invops} takes up the problem of constructing
invariant differential operators acting on conformal hypersurface
invariants. As an application, we calculate extrinsically coupled
conformal Laplacian powers in low dimensions. The final
Section~\ref{critfs} treats Theorem~\ref{kin}
and gives low dimensional examples. 

\subsection{Notation}
Our notations for standard objects in Riemannian geometry, hypersurface theory  and the conformal tractor calculus coincides with that of~\cite[Sections~2.1, 2.3 and 3.1]{GW15}, but we will also remind readers of key definitions at the appropriate junctures.

\medskip

\noindent{\bf Acknowledgements.}
Both authors would also like to thank
C.R. Graham for helpful comments.  A.W. thanks
R. Bonezzi, M. Halbasch, M. Glaros for discussions.  The authors
gratefully acknowledge support from the Royal Society of New Zealand
via Marsden Grant 13-UOA-018 and the UCMEXUS-CONACYT grant CN-12-564.
A.W. thanks  the University of Auckland for warm hospitality and
the Harvard University Center for the Fundamental Laws of
Nature. A.W. was also supported by a Simons Foundation Collaboration
Grant for Mathematicians ID 317562.

\section{Hypersurface invariants}\label{Riemann}

To prepare for our study of conformal hypersurface invariants we first demonstrate how Riemannian hypersurface invariants can be efficiently treated via an analog of the singular Yamabe Problem~\ref{I2-prob}. 
Since locally any hypersurface is the zero set of some defining function, there is no loss of generality in restricting to
hypersurfaces~$\Sigma$ which are the zero locus~$\mathcal{Z}(s)$
of some defining function~$s$. To further simplify our discussion  we
also assume that~$M$ is oriented with volume form
$\omega$. 
Given a hypersurface in~$M$, it has an orientation
determined by~$s$ and~$\omega$, as~$\boldsymbol{d} s$ is a conormal
field. Different defining functions are {\em compatibly oriented} if
they determine the same orientation on~$\Sigma$.

\begin{definition}\label{R-invtdef} 
For hypersurfaces, a {\em scalar Riemannian pre-invariant}  is a function
$P$ which assigns to each pair consisting of a Riemannian~$n$-manifold
$(M,g)$ and hypersurface defining function~$s$, a function
$P(s ; g)$ such that: \\ 
\begin{enumerate}
\item[(i)] $P(s; g)$ is natural, in the sense that
for any diffeomorphism~$\phi:M\to M$ we have~$P(\phi^* s;\phi^* g ) =
\phi^* P(s; g)$.\\ 
\item[(ii)] The restriction of~$P(s; g)$ is independent of the choice of
oriented defining functions, meaning that if~$s$ and~$s'$ are two
compatibly oriented defining functions such that
$\mathcal{Z}(s)=\mathcal{Z}(s')=:\Sigma$ then,~$P(s; g)|_\Sigma= P(s';
g)|_\Sigma$.\\
\item[(iii)] $P$ is given by a universal polynomial expression such that, 
given a local coordinate system~$(x^a)$ on~$(M,g)$,~$P(s; g)$ is given by
a polynomial in the variables
$$g_{ab},~\partial_{a_1}g_{bc}, ~\cdots, ~\partial_{a_1}\partial_{a_2}\cdots \partial_{a_k}
g_{bc},~(\det g)^{-1},$$
$$s,~\partial_{b_1} s,~\cdots~,\partial_{b_1}\partial_{b_2}\cdots \partial_{b_\ell} s, 
~|| \boldsymbol{d} s ||_g^{-1}, \omega_{a_1\ldots a_d}\, ,$$ for some positive integers~$k,\ell$. 
\end{enumerate}
A {\em scalar Riemannian invariant} of a hypersurface~$\Sigma$ is the
restriction~$P(\Sigma;g):=P(s;g)|_\Sigma$ of a 
pre-invariant~$P(s;g)$ to~$\Sigma:= \mathcal{Z}(s)$.
\end{definition}
In (iii)~$\partial_a$ means~$\partial/\partial x^a$,
$g_{ab}=g(\partial_a,\partial_b)$,~$\det g= \det(g_{ab})$ and $\omega_{a_1\ldots a_d}=\omega(\partial_{a_1},\ldots,\partial_{a_d})$.  
For
(i) note that if~$\Sigma=\mathcal{Z}(s)$, then~$\phi^{-1} (\Sigma)$
is a hypersurface with defining function~$\phi^* s$. The conditions (i),(ii) and
(iii) mean that any Riemannian invariant~$P(s;g)|_\Sigma$ of~$\Sigma$, is
entirely determined by the data~$(M,g,\Sigma)$. Then in this notation the naturality condition of (i) implies
~$\phi^*\!( P(\Sigma,g))= P(\phi^{-1}(\Sigma),\phi^* g)$.
The above
definition extends {\it mutatis mutandis}  to tensor valued
  hypersurface pre-invariants and {invariants}.
 
 \begin{example}\label{preinvariants}
 The quantities
 $$
 P(s;g)=
 \frac{1}
 {d-1}\, 
 \nabla^a\Big(
 \frac{\nabla_a s }{\, |\nabla s|}\Big)
 \mbox{ and }
 P_{ab}(s;g)=\Big(\nabla_a-\frac{(\nabla_a s)}{|\nabla s|}
\frac{(\nabla^c s)}{|\nabla s|}\, 
\nabla_c\Big)
\Big(
 \frac{\nabla_b s }{\, |\nabla s|}\Big)
$$
 are preinvariants, respectively,  for the {\it mean curvature} $H=P(s;g)\big|_{\Sigma={\mathcal Z}(s)}$
 and {\it second fundamental form}
 $\II_{ab}=P_{ab}(s;g)_{\Sigma={\mathcal Z}(s)}$.
 \end{example}

Property (ii) of  preinvariants in Definition~\ref{R-invtdef} can be exploited to expedite hypersurface invariant computations.
For example, a for many purposes simpler mean curvature preinvariant is
\begin{equation}\label{firstimprovement}
P(s_1;g) = \frac{\Delta s_1}{d-1}\, ,\mbox{ where }\, 
s_1=\frac{s}{|\nabla s|}\, \Big(1+\frac12 \, \frac{s}{|\nabla s|}\, \frac{(\nabla s).\nabla\log |\nabla s|}{|\nabla s|}\Big)\, . 
\end{equation} 
To see this, one computes $|\nabla s_1|$
and finds that 
$$
|\nabla s_1|^2 = 1 + s^2 A\, ,
$$
where the function $A$ is smooth. This implies that for {\it any} defining function $s$, we have that $s_1$ is also a defining function,
but with the improved behavior of the length of its gradient quoted above which allows the mean curvature to be computed directly from its Laplacian. Similarly, the second fundamental form preinvariant becomes simply
$$
P_{ab}(s_1;g)=\nabla_a \nabla_b s_1\, .
$$

Following this line of reasoning, we pose the following problem for Riemannian hypersurface defining functions:
\begin{problem}\label{Riemannsfirststep}
Given~$\Sigma$, a smooth hypersurface in a Riemannian manifold~$(M,g)$
find a defining function~$s$ such that~$n_{s}:=\nabla s$ obeys
\begin{equation}\label{ellp1}
|n_{}|^2=1+ s^{\ell+1} A\, ,
\end{equation}
for some~$A\in C^\infty(M)$ and~$\ell\in\Nint\cup\infty$ as high as possible.
\end{problem}

Problem~\ref{Riemannsfirststep}
can be solved by an explicit recursion to ${\mathcal O}(s^\infty)$~\cite{GW15}. 
Moreover, the recursion
uniquely determines $s$ to any given order.
Defining functions obeying $|\nabla s|=1$ 
are called {\it unit defining functions} and we shall also use this terminology in the setting where 
Problem~\ref{Riemannsfirststep} has been solved to sufficiently high order to uniquely determine the jets of $s$ required to evaluate any quantities involved.
Note that in fact $|n|^2=1$
can be solved in a neighborhood
of $\Sigma$, whereby $s$ measures the geodesic distance to the hypersurface. 
This is a standard maneuvre in the construction of Gaussian normal coordinates~(see for example \cite{Wald}).
For explicit computations the recursion is useful.

\begin{example}
Consider the hypersurface  in Euclidean space given by the graph of a smooth  function $f(x,y)$.
To compute the mean curvature 
we need data of the defining function up to its 2-jet. Thus, beginning with the defining function $z-f(x,y)$, we employ the improvement formula in Equation~\nn{firstimprovement} to find
$$
s=\frac{z-f}{\sqrt{f_x^2+f_y^2+1}}\left(
1-\frac12\, (z-f)\, 
\frac{f_x^2 f_{xx}+2f_x f_y f_{xy} +f_y^2 f_{yy}}{(f_x^2+f_y^2+1)^2}
\right)\, .
$$
It is not difficult to verify that this defining function obeys $|\nabla s|^2 =1 + s^2 A$ where $A$ is smooth. Moreover, the mean curvature is
$$
H=\frac12 \Delta s \big|_{z=f}=-\frac12 \frac{f_{xx}+f_{yy}+f_y^2f_{xx}
-2 f_x f_y f_{xy} + f_x^2f_{yy} }{(f_x^2+f_y^2+1)^{3/2}}\, .
$$
Readers will recognize the standard mean curvature formula for graphs.
\end{example}

Before developing further the calculus of unit defining functions and applying this to the singular Yamabe problem, we quickly review key ingredients of Riemannian hypersurface theory.

\subsection{Riemannian hypersurfaces}

Given a vector field $\hat n^a\in \Gamma(TM)$
such that $\hat n^a|_\Sigma$ is a unit normal, we may identify the tangent bundle $T\Sigma$ 
and the subbundle $TM^\top$ of $TM|_\Sigma$ orthogonal to $\hat n^a$. Thus we may employ this isomorphism to identify sections of $T\Sigma$ and $TM^\top$ and use the abstract indices of $TM$ to label these.
In particular the projection of tensors on $M$ to hypersurface tensors will be denoted by the symbol $\top$; for a vector $v\in\Gamma(TM)$ we thus have $v^\perp:=v-\hat n\,  \hat n .v$.

In general, objects intrinsic to $\Sigma$ will be labeled by a bar. For example, for a vector~$\bar v^a\in \Gamma(T\Sigma)$ and any extension of this to $v^a\in \Gamma(TM)$ subject to $v|_\Sigma=v^\top|_\Sigma=\bar v$, the intrinsic and ambient Levi-Civita connections, $\bar \nabla$ and $\nabla$ are related by the Gau\ss\, formula
\begin{equation}\label{hypgrad}
\bar \nabla_a \bar v^b = (\nabla^\top_a v^b + \hat n^b \II_{ac} v^c)\big|_\Sigma\, , 
\end{equation}
where the second fundamental form $\II_{ab}\in \Gamma(\odot^2 T^*\Sigma)$ is given by
\begin{equation}\label{two}
\II_{ab}= \nabla^\top_a \hat n_b\big|_\Sigma
\, .\end{equation}
Identifying $\hat n$ and $\nabla s/|\nabla s|$, we see that this formula is the origin of the preinvariant given in Example~\nn{preinvariants}.

\subsection{Unit defining functions and Riemannian hypersurface invariants}

Given a unit defining function $s$ we can proliferate Riemannian hypersurface invariants simply by computing all possible tensors built from gradients $\nabla_a\nabla_b\cdots \nabla_c s$, Riemannian invariants built from Riemann tensors, contractions of these objects and then studying their restriction to $\Sigma$. This methodology also yields efficient derivations of the relations of Gau\ss, Codazzi, Mainardi and Ricci.

For example, call $n_a:=\nabla_a s$. Then from
the second fundamental form preinvariant given in Example~\ref{preinvariants}, we see immediately that
\begin{equation}\label{IIunit}
\II_{ab}=\nabla_a n_b\big|_\Sigma\, .
\end{equation}
However,
$$\nabla_a \nabla_b n_c - \nabla_b \nabla_a n_c = R_{abcd}n^d\, .$$
Restricting the above relation to $\Sigma$, applying the projector $\top$ to the indices $a$, $b$ and~$c$, and then using the Gau\ss\ formula~\nn{hypgrad}, the above relation becomes
the well known Codazzi--Mainardi equation
\begin{equation}\label{Mainardi}
\nablab_a\II_{bc} - \nablab_b \II_{ac}= \big(R_{abcd}\hat n^d\big)^{\!\top}\, .
\end{equation}
Similar manuevres yield the Gau\ss\  equation 
\begin{equation}\label{Gauss}
\bar R_{abcd}=R^\top_{abcd}+\II_{ac}\II_{bd}-\II_{ad}\II_{bc}\, ,
\end{equation}
and Ricci relation
\begin{equation}\label{JJbar}
\II_{ab} \II^{ab} - (d-1)^2 H^2 =  \Sc -2\Ric(\hat n,\hat n)-\overline{\Sc}\, .
\end{equation}
For surfaces embedded in three dimensional Euclidean spaces, the above gives Gau\ss' {\it Theorema Egregium}.

We can also compute expressions involving higher jets of $s$:
Using the  fact~$n^an_a=1$ to all orders, it follows that~$\nabla_n n_b = n^a \nabla_a n_b
= n^a\nabla_b n_a =\frac12 \nabla_b (n^2)= 0$ to all orders along~$\Sigma$. Thus, remembering that $n^a|_\Sigma=\hat n^a$,
\begin{equation}\label{3rd}
\nabla_a\nabla_b \nabla_c {s}\big|_\Sigma= \nablab_a\II_{bc}-\hat n_b^{\phantom{2}}\II^2_{ca}-\hat n_c^{\phantom{2}}\II^2_{ab}
-\hat n_a\II^2_{bc}-\hat n_aR_b{}^d{}_c{}^e\hat n_d\hat n_e\, .
\end{equation}
Here we have denoted~$\II_a^b\II_{bc}^{\phantom{c}}=:\II^2_{ac}$.
More generally, we can compute the~$(k+1)^{\rm th}$ covariant
derivative~$\nabla_a\nabla_b\cdots \nabla_c s$ in terms of a
$\nabla^\top$ derivative of the~$k^{\rm th}$ covariant derivative
$\nabla_b\cdots \nabla_c s$ and lower transverse-order
derivatives of~${s}$ (transverse-order counts the number of transverse derivatives $\nabla_n$ in the obvious way, see~\cite{BrGoCNV} where it is called normal-order) since
$$
\nabla_a\nabla_b\cdots \nabla_c {s}= \nabla^\top_a\nabla_b\cdots \nabla_c {s} +n_a n^d \nabla_d \nabla_b\cdots \nabla_c {s}\, ,
$$ and the fact that~$n^a\nabla_b n_a = 0$ to all orders enables us to
re-express the second term in terms of~$k^{\rm th}$ derivatives of
${s}$. Thus by induction the result~\nn{3rd} generalises to compute hypersurface invariants in terms of any number of gradients of a unit defining function~$s$. We collect some useful 
identities derived from this observation in the 
following example:

\begin{example}
Expression~\nn{IIunit} for the second fundamental form implies that the mean curvature obeys
\begin{equation}\label{Hunit}
\nabla.n\big|_\Sigma=(d-1)H\, .
\end{equation}
Contracting the immediately subsequent display with
$n^a$ gives
\begin{equation}
\label{II2unit}
\nabla_n \nabla_b n_c\big|_\Sigma=
-\II_b^a \II_{ac}+R_{abcd}\hat n^a \hat n^d\, .
\end{equation}
The trace of this equation gives
\begin{equation}\label{trII2unit} \nabla_n \nabla. n\big|_\Sigma=-\II_a^b\II^a_b -\Ric(\hat n,\hat n)\, .
\end{equation}
Finally, for~$f$ any smooth extension of~$\bar f\in C^\infty \Sigma$,
the ambient and hypersurface Laplacians are related by
\begin{equation}\label{laplaceunit}
(\Delta^g-\nabla.n\,  \nabla_n -\nabla_n^2) f\big|_\Sigma=\bar\Delta \bar f\, .
\end{equation}
\end{example}

In summary, given a unit defining function, we can proliferate hypersurface invariants by  constructing ambient, coupled Weyl invariants (in the sense of Weyl's classical invariant theory). In fact, the recursion discussed above, establishes the following result:

\begin{theorem}\label{ind4R}\label{Rins}
If~${s}$ is a unit defining function for a Riemannian hypersurface
$\Sigma$ then, for any integer~$k\geq 1$, the quantity 
$
\nabla^k {s}|_\Sigma
$ may be expressed as~$\nablab^{k-2}\II$ plus a
linear combination of partial contractions involving the conormal~$\hat n$,
$\nablab^{\ell}\II$ for~$ 0\leq \ell\leq k-3$, and the
Riemannian curvature~$R$ and its covariant derivatives (to order at
most~$k-3$). 
Thus
any tensor of the form  
$$
\mbox{Partial-contraction}\big( (\nabla \cdots \nabla {s} ) \ldots (\nabla \cdots \nabla {s} )  (\nabla \cdots \nabla R ) \ldots (\nabla \cdots \nabla R ) \big) \big|_\Sigma\, ,
$$
yields a Riemannian hypersurface invariant. 
This may be re-expressed as linear combination of tensors built as partial contractions of undifferentiated conormals, as well as the second fundamental form and the Riemann curvature as well as derivatives thereof.\end{theorem}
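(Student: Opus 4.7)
I would prove Theorem~\ref{ind4R} by induction on the integer $k$, using the decomposition of a full covariant derivative into its tangential and transverse parts together with the crucial unit-condition identity $n^a\nabla_b n_a = 0$, which holds to all orders because $|n|^2_g = 1+O(s^\infty)$ for a unit defining function.

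The base cases $k=1,2$ are immediate: $\nabla_a s = n_a$ restricts to $\hat n_a$ on $\Sigma$, and $\nabla_a\nabla_b s = \nabla_a n_b$ restricts to $\II_{ab}$ on $\Sigma$ by~\nn{IIunit} (note that $\nabla_a n_b$ is symmetric and the unit condition implies it is tangential in either slot along $\Sigma$, so no normal components appear). The case $k=3$ serves as a model for the inductive step: one writes $\nabla_a(\nabla_b\nabla_c s) = \nabla_a^\top(\nabla_b\nabla_c s) + n_a n^d\nabla_d\nabla_b\nabla_c s$, expresses the first term using the Gau\ss\ formula~\nn{hypgrad} to recover $\bar\nabla_a\II_{bc}$ plus $\II$-corrections, and commutes the $n^d\nabla_d$ past the other derivatives using $[\nabla_d,\nabla_b]$, producing Riemann curvature terms, until it meets $n_a$ where it vanishes. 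This yields formula~\nn{3rd}.

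For the inductive step, assume the result for $\nabla^k s|_\Sigma$. Using
\[
\nabla_{a}\nabla_{b_1}\cdots\nabla_{b_k} s = \nabla_{a}^\top\nabla_{b_1}\cdots\nabla_{b_k} s + n_a\, n^d\nabla_{d}\nabla_{b_1}\cdots\nabla_{b_k} s\, ,
\]
I would handle the two summands separately. For the tangential piece, substitute the inductive expression for $\nabla^k s|_\Sigma$ (a polynomial in $\hat n$, $\bar\nabla^{\le k-2}\II$ and $\nabla^{\le k-3}R$) and apply $\nabla^\top_a$; the Gau\ss\ formula~\nn{hypgrad} converts each $\nabla^\top$ into $\bar\nabla$ plus terms of the form $\hat n\cdot\II\cdot(\text{lower})$, and tangential differentiation of ambient Riemann terms contributes at most $\nabla^{\le k-2}R$. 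For the transverse piece, one commutes $n^d\nabla_d$ successively past each $\nabla_{b_i}$, so that at each step one produces (i) a Riemann-curvature contraction with one fewer derivative of $s$, and (ii) a factor $(\nabla_{b_i}n^d)$, which on $\Sigma$ is $\II_{b_i}{}^d + \hat n_{b_i}(\text{transverse})$ that by the unit condition reduces again to $\II$ together with lower-order $\nabla s$ terms. After at most $k$ commutations, $n^d\nabla_d$ falls on $n_e = \nabla_e s$ and annihilates, leaving only Riemann, $\II$ and lower-order $\nabla^{\le k} s$ pieces to which the inductive hypothesis applies.

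The hardest part is bookkeeping the transverse order: each commutator drops the transverse-order by one (in the sense of~\cite{BrGoCNV}), so the induction must be organized not on total derivative count alone but on the pair (derivative count, transverse order). Once the gradient formula is established, the second statement of the theorem is a formal consequence: any partial contraction of terms of the form $\nabla^{k_i} s$ with $\nabla^{m_j} R$, after restriction to $\Sigma$ and substitution of the formula of the first part, becomes a polynomial in $\hat n$, $\bar\nabla^{\le}\II$ and $\nabla^{\le}R$; naturality and the definition of unit defining function ensure that the resulting hypersurface tensor depends only on $(M,g,\Sigma)$ and therefore qualifies as a Riemannian hypersurface invariant in the sense of Definition~\ref{R-invtdef}.
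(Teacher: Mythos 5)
Your proposal is correct and follows essentially the same route as the paper: the tangential/transverse split $\nabla_a = \nabla_a^\top + n_a\nabla_n$, the all-orders identity $n^a\nabla_b n_a = 0$ used to push the transverse piece down to $k$-th and lower jets of $s$ (producing Riemann terms from commutators), and the Gau\ss\ formula to convert $\nabla^\top$ into $\bar\nabla$ plus $\II$-corrections. The paper presents precisely this as the sketch preceding the theorem statement, with Equation~\nn{3rd} ($k=3$) serving as the model case exactly as you use it, so your proof plan merely supplies extra detail on the bookkeeping already indicated there.
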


The main thrust of our article is to treat conformal hypersurface invariants  in analogy to the construction leading to the above  theorem. 
A dictionary for this analogy is tabulated below:
\medskip
\begin{center}
\begin{tabular}{ccc}
Riemannian&&Conformal\\[1mm]\hline\\[-3mm]
unit defining function&$\longleftrightarrow$&conformal unit defining density \\[3mm]
$|\nabla s|_g^2 = 1$  
&$\longleftrightarrow$&$
|\nabla \sigma|_g^2 - \frac 2d\, \sigma \big[\Delta^g + \frac{\Sc^g}{2(d-1)}\big] \sigma = 1$\\[3mm]
Weyl's invariant theory
&$\longleftrightarrow$&
Weyl invariants via tractors 
\end{tabular}
\end{center}
\medskip
As implied by this table, a complete treatment requires that we
introduce a tractor calculus for the computation of ambient coupled
conformal invariants. However, simpler aspects of that program can
actually be handled with the elementary unit defining function
calculus described above.

\subsection{Unit defining functions and the singular Yamabe problem}\label{examples}

Theorem~\ref{prod} ensures that any defining function~$s$ can
be improved to a  defining density {\it function}~$\sigma(s)$ obeying the asymptotic singular Yamabe condition
\begin{equation}\label{mrpotatohead}
|\nabla \sigma|_g^2 - \frac 2d\, \sigma \big[\Delta^g + \frac{\Sc^g}{2(d-1)}\big] \sigma = 1+\sigma^d B_{\sigma(s)}\, ,
\end{equation} 
where $B_{\sigma(s)}$ is smooth. 
It is possible to directly implement the recursion of~\cite{GW15} to explicitly solve 
the singular Yamabe problem to 
the order required for studying the
Willmore invariant. This is very useful for applications involving explicit metrics.
 While this is technically
intensive, simplifications arise if one takes $s$ to be a unit
defining function, and in particular if one restricts to the case of a
Euclidean ambient space.
We record our solution to this problem below:
\begin{lemma}\label{improved}\label{obstructed1}
Let $s$ be a unit defining function for a hypersurface embedded in $d$-dimensional Euclidean space, and call $n=\nabla s$. Then 
solutions to Equation~\nn{mrpotatohead} 
are given  by
\begin{equation*}
\left\{
\begin{array}{lr}
\sigma(s)\stackrel{g}=s+\frac{s^2}4\, \nabla.n + \frac{s^3}{12} 
\big(\nabla_n \nabla.n +2(\nabla.n)^2\big)\, ,&d=3\, ,
\\[3mm]
\sigma(s)\stackrel{g}=s + \frac{s^2}6\, \nabla.n
+\frac{s^3}{18} 
\, (\nabla.n)^2+\frac{s^4}{144}\big(6\Delta \nabla.n+4\nabla.n\nabla_n\nabla.n
+\frac{14}{3}(\nabla.n)^3
\big)\, ,& d=4\, ,
\end{array}
\right.
\end{equation*}
with
\begin{equation}\label{S(sigma)}
\left\{
\begin{array}{lr}
B_{\sigma(s)}
=-\frac{1}{12}\, \big(2\, \Delta\nabla.n+2\, \nabla_n^2\nabla.n+8\, \nabla.n\, \nabla_n\nabla.n+3\, (\nabla.n)^3\big)\, ,
  &d=3\, ,\\[3mm]
B_{\sigma(s)}
=-\frac{1}{108}\, \Big(9\, \nabla_n\Delta\nabla.n+12\, \nabla.n\, \Delta^g\nabla.n+6\, \nabla.n\,\nabla_n^2\nabla.n 
\\[2mm] \hspace{3.6cm}+\,3\, (\nabla_i^g\nabla.n)(\nabla^i_g\nabla.n)+6(\nabla_n\nabla.n)^2
\\[1mm] \hspace{3.6cm}+\,18\, (\nabla.n)^2\, \nabla_n\nabla.n+4\, (\nabla.n)^4
     \Big)\, ,
     & d=4\, .
\end{array}
\right.
\end{equation}
\end{lemma}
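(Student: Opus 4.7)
The plan is to carry out the singular-Yamabe recursion of Theorem~\ref{BIGTHE} explicitly, exploiting two simplifications afforded by the hypotheses: the ambient space is Euclidean so $\Sc^g=0$, and $s$ is a unit defining function so $|\nabla s|^2=1$ identically. Writing $n=\nabla s$ and adopting the Theorem~\ref{BIGTHE} ansatz in the form $\sigma=s\Phi$ with $\Phi=1+\alpha_1 s+\cdots+\alpha_{d-1}s^{d-1}$, these hypotheses collapse the singular-Yamabe quantity to
\begin{equation*}
S(g,\sigma) = \Phi^2\Bigl(1 - \tfrac{2s}{d}\nabla.n\Bigr) + \tfrac{2(d-2)}{d}\, s\Phi\, n.\nabla\Phi + s^2\Bigl(|\nabla\Phi|^2 - \tfrac{2\Phi}{d}\Delta\Phi\Bigr).
\end{equation*}
A direct Taylor expansion in $s$ shows that the coefficient of $\alpha_k$ at order $s^k$ in this expression equals $2(k+1)(d-k)/d$. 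This is strictly positive for $1\le k\le d-1$ and vanishes exactly at $k=d$, thus singling out $s^d$ as the first order at which no additional polynomial adjustment to $\sigma$ can cancel the remainder, in agreement with Theorem~\ref{BIGTHE}.

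The first step is to solve $S(g,\sigma)-1=0$ at the successive orders $s^1,s^2,\ldots,s^{d-1}$. At order $s$ the equation is linear in $\alpha_1$ and yields $\alpha_1=\nabla.n/(2(d-1))$, which is $\nabla.n/4$ for $d=3$ and $\nabla.n/6$ for $d=4$, matching the displayed formul\ae\ for $\sigma$. Substituting, the $s^2$ equation determines $\alpha_2$ as a quadratic polynomial in $\nabla.n$ and its transverse derivative $\nabla_n\nabla.n$; for $d=3$ this terminates the ansatz and produces the stated~$\sigma$, while for $d=4$ an analogous $s^3$ computation pins down $\alpha_3$. The second step is to substitute the computed $\alpha_k$ back into $S(g,\sigma)$ and read off the coefficient of $s^d$: by construction this is exactly $B_{\sigma(s)}$, yielding the formulae~\eqref{S(sigma)}.

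The main obstacle is purely bookkeeping. Already for $d=4$ the coefficient of $s^4$ demands products of $\alpha_1,\alpha_2,\alpha_3$ together with their gradients and Laplacians, which ultimately encode up to three iterated derivatives of $\nabla.n$ (e.g.\ the term $\nabla_n\Delta\nabla.n$ in~\eqref{S(sigma)}). The simplifying identities available throughout are $\nabla_n n=\tfrac12\nabla|\nabla s|^2=0$, the flat-space symmetry $\nabla_a n_b=\nabla_b n_a$, and systematic Leibniz expansions such as
\begin{equation*}
\Delta(\alpha_k s^k)=k(k-1)\alpha_k s^{k-2}+k\alpha_k s^{k-1}\nabla.n+2k s^{k-1} n.\nabla\alpha_k+s^k\Delta\alpha_k.
\end{equation*}
Using these, every term arising in the expansion is expressible in the basis of iterated tangential and transverse derivatives of $\nabla.n$, and collecting coefficients produces the claimed formulae. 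Smoothness of the resulting $\alpha_k$ and of $B_{\sigma(s)}$ is guaranteed by Theorem~\ref{BIGTHE}, so no regularity issue can obstruct the recursion.
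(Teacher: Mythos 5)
Your proposal is correct and essentially reproduces the paper's own (very brief) argument: the authors simply invoke the recursion algorithm of Proposition~4.9 of~\cite{GW15} and then compute $S(g,\sigma)$, which is precisely the order-by-order scheme you flesh out, with the same simplifications $\Sc^g=0$, $|\nabla s|=1$, $\nabla_n n=0$. Your explicit identification of the coefficient $2(k+1)(d-k)/d$ multiplying $\alpha_k$ at order $s^k$ is a nice concrete way to see both the solvability for $k\le d-1$ and the obstruction at $k=d$, but the underlying method is the same elementary calculation the paper points to.
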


\begin{proof}
The first half of this Lemma can be proved by following the algorithm given in Proposition~4.9 of~\cite{GW15} and thereafter 
computing~$S(g,\sigma)$ as given in~\nn{Sgsigma} ({\it i.e.}\ \nn{mrpotatohead}). Alternatively, since the lemma gives explicit formul\ae\ for the improved 
defining function, one can simply directly evaluate~$S(g,\sigma(s))$ for the quoted $\sigma(s)$. Either method only requires an elementary calculation.
\end{proof}

According to Theorem~\ref{BIGTHE}, the quantity $B_{\sigma(s)}$ 
yields a natural conformal hypersurface invariant upon restriction to~$\Sigma$.
Indeed, $B_{\sigma(s)}|_\Sigma$  
equals the obstruction density  computed in the scale $g$.
It is interesting therefore to compute this invariant.
For that we specialize Equations~\nn{Hunit},~\nn{trII2unit} and~\nn{II2unit} to a flat ambient space, and apply the recursion underlying  Theorem~\ref{ind4R} to find
\begin{eqnarray*}\nabla.n\big|_\Sigma&=&\II_a^a=(d-1)H\, ,\\[1mm]
\nabla_n\nabla.n\big|_\Sigma&=&-\II_{ab}\II^{ab}=-\IIo_{ab}\IIo^{ab}-(d-1)H^2\, ,\\[1mm]
\nabla_n^2 \nabla.n\big|_\Sigma&=&2\II_{ab}\II^{bc}\II_{c}^a=2\IIo_{ab}\IIo^{bc}\IIo_{c}^a
+6H\IIo_{ab}\IIo^{ab}+2(d-1)H^3\, .
\end{eqnarray*}
In the above $\IIo_{ab}$ denotes the trace-free second fundamental form
$$\IIo_{ab}:=\II_{ab}-H\bar g_{ab}\, ,$$
which is  well known to be a conformal hypersurface invariant.
It is not difficult to use these identities and
Equation~\nn{laplaceunit} to establish that
$$
\Delta \nabla.n\big|_\Sigma=(d-1)\bar\Delta H
+2\IIo_{ab}\IIo^{bc}\IIo_{c}^a
-(d-7)H\IIo_{ab}\IIo^{ab}-(d-1)(d-3)H^3\, .
$$
The above results combined with the~$d=3$ case of Lemma~\ref{obstructed1} give the following:
\begin{proposition}\label{Willmore}
For surfaces in   conformally flat three-manifolds, 
\begin{equation}\label{Wore}
B_{\sigma(0)}=-\frac13\big(\bar\Delta H + H\IIo_{ab}\IIo^{ab}\big)\, .
\end{equation}
\end{proposition}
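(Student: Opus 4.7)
The plan is to simply substitute the four kinematic identities for $\nabla\cdot n$, $\nabla_n\nabla\cdot n$, $\nabla_n^2\nabla\cdot n$ and $\Delta\nabla\cdot n$ that were just established immediately preceding the proposition into the $d=3$ formula for $B_{\sigma(s)}$ given in Lemma~\ref{obstructed1}, and simplify. Because $B_{\sigma(s)}|_\Sigma$ is a natural conformal hypersurface invariant (Theorem~\ref{BIGTHE}), we lose no generality by computing in a flat metric representative of the conformally flat class, so the hypotheses of Lemma~\ref{obstructed1} apply on the nose.

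First I would specialise the four identities to $d=3$. With $|\IIo|^2:=\IIo_{ab}\IIo^{ab}$ and $\tr\IIo^3:=\IIo_{ab}\IIo^{bc}\IIo_{c}^{a}$ these read
\begin{align*}
\nabla\cdot n\big|_\Sigma &= 2H,\\
\nabla_n\nabla\cdot n\big|_\Sigma &= -|\IIo|^2 - 2H^2,\\
\nabla_n^2\nabla\cdot n\big|_\Sigma &= 2\,\tr\IIo^3 + 6H|\IIo|^2 + 4H^3,\\
\Delta\nabla\cdot n\big|_\Sigma &= 2\bar\Delta H + 2\,\tr\IIo^3 + 4H|\IIo|^2,
\end{align*}
using in the last line that $(d-1)(d-3)=0$ and $-(d-7)=4$ when $d=3$.

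Next I would plug these into the $d=3$ formula for $B_{\sigma(s)}$ from Lemma~\ref{obstructed1} and gather terms. A direct tally shows the coefficient of $\bar\Delta H$ is $4$, the coefficient of $\tr\IIo^3$ is $4+4=8$, the coefficient of $H|\IIo|^2$ is $8+12-16=4$, and the coefficient of $H^3$ is $8-32+24=0$. Thus
\begin{equation*}
B_{\sigma(s)}\big|_\Sigma = -\tfrac{1}{12}\bigl(4\bar\Delta H + 8\,\tr\IIo^3 + 4H|\IIo|^2\bigr).
\end{equation*}

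The key observation, and the only place where the codimension and dimension of $\Sigma$ matter beyond bookkeeping, is that $\IIo_{ab}$ is a trace-free symmetric endomorphism of the two-dimensional tangent space $T\Sigma$. By the Cayley--Hamilton theorem, $\IIo^2 = \tfrac12|\IIo|^2\,\bar g$ as an endomorphism of $T\Sigma$, and therefore $\tr\IIo^3 = 0$ identically on a surface. The stated formula \nn{Wore} follows immediately. The only genuine "obstacle" is guarding against sign and coefficient errors in the substitution, especially in the term $\Delta\nabla\cdot n|_\Sigma$ where both $-(d-7)$ and $(d-1)(d-3)$ need the $d=3$ values; there is no conceptual difficulty.
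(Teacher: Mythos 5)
Your proof is correct and follows essentially the same route as the paper: specialise the four kinematic identities preceding the proposition to $d=3$, substitute into the $d=3$ formula of Lemma~\ref{obstructed1}, and simplify. The paper leaves implicit the crucial observation that $\IIo_{ab}\IIo^{bc}\IIo_c^{\ a}=0$ on a two-dimensional hypersurface (via Cayley--Hamilton for a trace-free symmetric $2\times2$ matrix), which you make explicit and which is exactly what kills the surviving cubic term, and your use of conformal invariance from Theorem~\ref{BIGTHE} to justify reducing from a conformally flat ambient to a flat one matches the paper's intent. Your arithmetic checks out.
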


\begin{remark}
The above result was first obtained in~\cite{ACF}.
Using the standard relation between Gau\ss\ and mean curvatures in Euclidean 3-space, namely 
${\rm K}=H^2-\frac12\IIo_{ab}\IIo^{ab}$, the above display becomes the {\it Willmore invariant}, or in other words the functional gradient of the Willmore energy functional (cf.\ \cite{GW15,Us}).
 \end{remark}

Exactly the same apparatus can be applied to the second half of Lemma~\ref{obstructed1} to give the analogous four dimensional result:

\begin{proposition}\label{4Will}
For hypersurfaces in conformally flat four-manifolds,  
$$
B_{\sigma(0)}=\frac1{6}\Big((\nablab_c\IIo_{ab})^2+
2\IIo^{ab}\bar\Delta\IIo_{ab}+\frac32\, \nablab.\IIo_a \nablab.\IIo^a
-2\bar\J\, \IIo_{ab}\IIo^{ab}  
+(\IIo_{ab}\IIo^{ab})^2
\Big)\, .
$$
\end{proposition}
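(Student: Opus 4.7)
The plan is to mirror the derivation of Proposition~\ref{Willmore}, specialized to $d=4$. Because Theorem~\ref{BIGTHE} guarantees that $B_{\sigma(0)}$ is a conformal hypersurface invariant, and the ambient four-manifold is assumed conformally flat, I would choose a flat representative $g$ of the conformal class; the $d=4$ case of Lemma~\ref{obstructed1} then applies in its stated form. The task reduces to evaluating, on $\Sigma$, the seven ingredients
\[
\nabla_n\Delta\nabla.n,\ \ \nabla.n\,\Delta\nabla.n,\ \ \nabla.n\,\nabla_n^2\nabla.n,\ \ (\nabla_i\nabla.n)(\nabla^i\nabla.n),\ \ (\nabla_n\nabla.n)^2,\ \ (\nabla.n)^2\nabla_n\nabla.n,\ \ (\nabla.n)^4,
\]
and then assembling them with the prefactor $-1/108$ and coefficients $(9,12,6,3,6,18,4)$.

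Four of the required restrictions are already tabulated in the text leading to Proposition~\ref{Willmore} for general $d$; specialising to $d=4$ I would use $\nabla.n|_\Sigma=3H$, $\nabla_n\nabla.n|_\Sigma=-\IIo_{ab}\IIo^{ab}-3H^2$, $\nabla_n^2\nabla.n|_\Sigma=2\IIo_{ab}\IIo^{bc}\IIo_c{}^a+6H\IIo_{ab}\IIo^{ab}+6H^3$ and $\Delta\nabla.n|_\Sigma=3\bar\Delta H+2\IIo_{ab}\IIo^{bc}\IIo_c{}^a+3H\IIo_{ab}\IIo^{ab}-3H^3$. For the full-gradient contribution, I would use $g^{ij}|_\Sigma=\bar g^{ij}+n^in^j$ to split
\[
(\nabla_i\nabla.n)(\nabla^i\nabla.n)\big|_\Sigma=|\nablab(\nabla.n)|^2+(\nabla_n\nabla.n)^2\big|_\Sigma,
\]
observing that $\nabla.n|_\Sigma=3H$ together with vanishing of tangential derivatives of $s$ along $\Sigma$ gives $|\nablab(\nabla.n)|^2|_\Sigma=9|\bar\nabla H|^2$; the flat-ambient contracted Codazzi identity from \nn{Mainardi} yields $\nablab^b\IIo_{ab}=2\nablab_aH$ in $d=4$, converting this to the target divergence-of-$\IIo$ structure.

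The principal new computation is $\nabla_n\Delta\nabla.n|_\Sigma$. I would commute $\nabla_n$ past $\Delta$ via the Ricci identity (with no curvature correction, since the ambient is flat), giving $\nabla_n\Delta\nabla.n=\Delta(\nabla_n\nabla.n)+[\nabla_n,\Delta]\nabla.n$, and then resolve $\Delta$ on $\Sigma$ using the tangential--normal split \nn{laplaceunit}, namely $(\Delta^g-\nabla.n\,\nabla_n-\nabla_n^2)f|_\Sigma=\bar\Delta\bar f$. Applied to $\nabla_n\nabla.n=-\IIo_{ab}\IIo^{ab}-3H^2+O(s)$, this produces the $\IIo^{ab}\bar\Delta\IIo_{ab}$ and $(\nablab_c\IIo_{ab})^2$ contributions that dominate the target formula, along with $\bar\Delta H$-type, cubic-$\IIo$, and pure $H$-polynomial pieces. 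The commutator $[\nabla_n,\Delta]\nabla.n$ on $\Sigma$ is handled by the induction underlying Theorem~\ref{ind4R}, producing only further terms already in this inventory.

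Substituting into the Lemma~\ref{obstructed1} combination and collecting by monomial structure, every term containing an undifferentiated $H$ (and every $\bar\Delta H$) must cancel — the $H$-dependence because $H$ is not conformally invariant (enforced a priori by Theorem~\ref{BIGTHE}). The residual $H^2\IIo_{ab}\IIo^{ab}$ and $(\IIo_{ab}\IIo^{ab})^2$ coefficients then reorganize, using the Ricci relation \nn{JJbar} which in flat ambient gives $\bar\Sc=(d-1)(d-2)H^2-\IIo_{ab}\IIo^{ab}=6H^2-\IIo_{ab}\IIo^{ab}$ and hence $\bar\J$ in the paper's convention, into the manifestly intrinsic combination $-2\bar\J\IIo_{ab}\IIo^{ab}+(\IIo_{ab}\IIo^{ab})^2$ stated in the proposition. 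The hard part is not the individual fourth-jet identities, which are mechanical applications of the recursion in Theorem~\ref{ind4R}, but rather the bookkeeping required to confirm the $H$ and $\bar\Delta H$ cancellations and to track the precise coefficient $3/2$ attached to $\nablab.\IIo_a\,\nablab.\IIo^a$; that coefficient is the most delicate consistency check, since it receives competing contributions from both $(\nabla_i\nabla.n)(\nabla^i\nabla.n)$ and the tangentialization of $\nabla_n\Delta\nabla.n$.
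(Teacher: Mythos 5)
Your plan follows essentially the same route as the paper, which for this proposition gives only the one-line indication that ``exactly the same apparatus [as the $d=3$ case] can be applied to the second half of Lemma~\ref{obstructed1},'' plus a pointer to the alternate holographic derivation in~\cite{GGHW}. Specialising the $d=4$ formula of Lemma~\ref{obstructed1}, inserting the tabulated restrictions of $\nabla.n$, $\nabla_n\nabla.n$, $\nabla_n^2\nabla.n$ and $\Delta\nabla.n$, and handling the two remaining fourth-jet ingredients $(\nabla_i\nabla.n)(\nabla^i\nabla.n)$ and $\nabla_n\Delta\nabla.n$ via the tangential-normal splitting~\nn{laplaceunit} together with the Codazzi trace and the recursion of Theorem~\ref{ind4R} is precisely what the paper's phrase points to.

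One caution: the final paragraph's assertion that ``every term containing an undifferentiated $H$ (and every $\bar\Delta H$) must cancel \dots\ enforced a priori by Theorem~\ref{BIGTHE}'' is not a valid argument. Conformal invariance of $B|_\Sigma$ does \emph{not} force the elimination of $H$; it only forces the inhomogeneous transformation terms of $H$ to cancel against those of accompanying pieces. The $d=3$ answer $-\tfrac13(\bar\Delta H + H\IIo_{ab}\IIo^{ab})$ is the counterexample: both $H$ and $\bar\Delta H$ survive there, in a combination that is conformally invariant but is \emph{not} expressible purely through $\IIo$, $\nablab\IIo$, and $\bar\J$. That all $H$-dependence in $d=4$ does reorganise into the intrinsic/$\IIo$ package of the proposition is a genuine output of the computation (with $\bar\Sc=6H^2-\IIo_{ab}\IIo^{ab}$ used to trade $H^2$ for $\bar\J$), not something you may assume in advance as a shortcut through the bookkeeping. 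Treat it as an a posteriori consistency check only.
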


An alternate proof of this proposition based on the holographic formula for the  obstruction density~$B$ given in Theorem~7.7 of~\cite{GW15} can be found in~\cite{GGHW}.

\begin{remark}
The trace-free second fundamental form~$\IIo_{ab}$, being conformally invariant, can be extended to an invariant hypersurface tractor~$L^{AB}\in\Gamma(\cT^{(AB)}\Sigma[-1])$ known as the tractor second fundamental form~\cite{Grant,Stafford}--see Section~\ref{hollow} for details. In these terms, the above display becomes 
$${\mathcal B}=\frac16\big((\bar D_A L_{BC}) (\bar D^{A} L^{BC})+(L_{AB}L^{AB})^2\big)\, ,$$
where $\bar D^A$ is the Thomas D-operator intrinsic to~$\Sigma$.
The above result provides  an independent check of conformal invariance, because this quantity is by construction a boundary conformal invariant.
\end{remark}

\section{Conformal hypersurface invariants}\label{canda}

Conformal hypersurface invariants are defined to be the Riemannian
invariants (see Definition~\ref{R-invtdef}) that are distinguished by
the property of possessing suitable covariance property under local
metric rescalings:

\begin{definition}\label{chi-def}
A {\em weight~$w$ conformal covariant} of a hypersurface~$\Sigma$ is a
Riemannian hypersurface invariant~$P(\Sigma,g)$ with the property that
$P(\Sigma,\Omega^2 g)= \Omega^w P(\Sigma,g)$, for any smooth positive
function~$\Omega$.
Any such covariant determines an invariant
section of~$\ce \Sigma[w]$ that we
shall denote~$P(\Sigma; \bg)$, where~$\bg$ is the conformal
metric of the conformal manifold~$(M,[g])$. We shall say that~$P(\Sigma; \bg)$ is a {\em
  conformal invariant} of~$\Sigma$. When $\Sigma$ is understood
  by context, the term {\em conformal hypersurface  invariant}
will refer to densities or weighted tensor fields which arise this way.
\end{definition}


\begin{example}
Given a defining function~$s$ and $g\in \cc$, the quantity
$$
P_a(s;g) = \frac{\nabla s}{|\nabla s|_g}
$$
is a preinvariant for the Riemannian hypersurface invariant $P_a(\Sigma;g)=\hat n_a$, termed the {\it unit conormal}. Since
$$P_a(s;\Omega^2 g)=\Omega P_a(s;g)\, ,$$
the unit conormal $\hat n_a$ is a weight $w=1$ conformal hypersurface invariant.
In contrast the mean curvature preinvariant $P(s;g)$ of Example~\ref{preinvariants} obeys
$$
P(\Sigma;\Omega^2 g)=\Omega^{-1}\Big(P(\Sigma;g)-\frac{\hat n .\Upsilon}{d-1}\Big)\, ,
$$
where $\Upsilon_a:=\nabla_a\log \Omega$, 
so the mean curvature is {\it not} a conformal hypersurface invariant. Note however, under metric rescalings $\Omega$ subject to $\hat n.\Upsilon=0$, {\it i.e.}, precisely those corresponding to the intrinsic conformal class of 
metrics $\bar\cc$ along $\Sigma$, the mean curvature transforms as a section of $\ce \Sigma[-1]$.
\end{example} 

\subsection{Computing the obstruction density}\label{comp-ext}

Theorem~\ref{ind4R} describes how to relate the jets of the
Riemannian canonical unit defining function~${s}$ to the regular
invariants of the Riemannian hypersurface that it defines. Here we
explain the corresponding algorithm for computing the jets of the canonical 
 conformal unit defining density~${\si}$ described in
Theorem~\ref{BIGTHE} and then apply this to computations of the obstruction density. This uses  ideas similar to the Riemannian  case, but the recursion is more subtle.

In a conformal manifold~$(M^d,\cc)$,~$d\geq 3$, we consider a
hypersurface~$\Sigma$ given as the zero locus of a smooth defining
density.  We need some key identities. For these we calculate with respect to
some metric in the conformal class,~$g \in \cc$ (but use the conformal
metric~$\bg$ to raise and lower indices).  
Recall that  a conformal unit defining density~$\sigma$ is a defining density satisfying 
\begin{equation}\label{nid}
n^2= 1-2\rho \si + \si^d B  \quad \Leftrightarrow \quad I_{\sigma}^2=1+\si^d B,
\end{equation}
for some smooth $B$, where
~$n$ is used to denote~$\nabla {\si}$, and ({\it cf.}~\nn{sctrac-def})
 \begin{equation}\label{Iform}
[I^A_{\si}]_g:=[\hD^A \si]_g = \left( \begin{array}{c} \si\\
n_a\\
\rho \end{array}\right)\, ,\qquad \rho:=\rho(\si)=-\frac{1}{d}(\Delta \si + \J \si)\, .
\end{equation}
Such a defining density exists  by  Theorem
\ref{BIGTHE} and is 
 canonical to~$\O(\si^{d+1})$. Note that in the above, $n^2$ is
defined via the conformal metric~$n^2=\bg^{-1}(\nabla \si, \nabla
\si)$. Display~\nn{nid} gives the failure of~$n$ to be a unit vector field away from~$\Sigma$.
Also, as above, we identify~$T\Sigma\cong TM^\top$ and
 shall write~$\gamma_{ab}:=g_{ab}-n_an_b$; along
$\Sigma$ this restricts to~$\bar{g}$, the induced metric. 
We will often denote the scale tractor~$I_{ \si}$ of the conformal unit defining density~$\si$ simply by~$ I$. We also heavily employ the (slightly ambiguous) notation $\stackrel\Sigma=$ to indicate equality along the hypersurface~$\Sigma$. In many instances, one side of such an equation will involve the restriction of an ambient quantity to $\Sigma$ while the other is a quantity only defined along $\Sigma$.
 As a first step, we  identify the second fundamental form in terms of the above data:

\begin{lemma}\label{HII}
\begin{equation}\label{rhoH}\rho\stackrel{\Sigma}=-H\, ,\end{equation}
and 
\begin{equation}\label{IIc}
\nabla_a n_b+\rho n_a n_b\stackrel\Sigma= \II_{ab}\, , \quad \mbox{ equivalently } \quad \nabla_a n_b\stackrel\Sigma= \II_{ab}+ H n_a n_b\, .
\end{equation}
Moreover
\begin{equation}\label{normaln}
\nabla_n n_a\stackrel\Sigma=Hn_a\, .
\end{equation}
\end{lemma}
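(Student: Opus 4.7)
The plan is to extract all three claims from the defining identity $n^2 = 1 - 2\rho\sigma + \sigma^d B$ together with the fact that $n = \nabla\sigma$ is a gradient, so that $\nabla_a n_b$ is symmetric. First I would observe that $\sigma\stackrel\Sigma=0$ forces $n^2\stackrel\Sigma=1$, so $n|_\Sigma$ is the (oriented) unit conormal $\hat n$, and consequently the projection $\gamma_a{}^c = \delta_a{}^c - n_a n^c$ restricts along $\Sigma$ to the tangential projector associated with $\hat n$.

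The central input is obtained by differentiating the unit identity: since $n^2 = 1 - 2\rho\sigma + \sigma^d B$, one gets
\begin{equation*}
\nabla_c(n^2) = -2(\nabla_c\rho)\,\sigma - 2\rho\, n_c + d\sigma^{d-1} n_c B + \sigma^d \nabla_c B,
\end{equation*}
and all terms containing at least one factor of $\sigma$ vanish along $\Sigma$ (here one uses $d\geq 3$). Hence $\nabla_c(n^2)\stackrel\Sigma= -2\rho\, n_c$. Because $n_a = \nabla_a\sigma$ is a gradient, $\nabla_a n_b = \nabla_b n_a$, so $n^b\nabla_a n_b = n^b\nabla_b n_a = \nabla_n n_a$, which together with the previous identity yields $\nabla_n n_a \stackrel\Sigma= -\rho\, n_a$.

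Next I would insert this into the projection formula for the second fundamental form $\II_{ab} = \gamma_a{}^c\gamma_b{}^d\nabla_c n_d\big|_\Sigma$. Expanding the projectors and using $\nabla_n n_a \stackrel\Sigma= -\rho\, n_a$ together with $n^2\stackrel\Sigma=1$, all cross terms collapse to give $\II_{ab} \stackrel\Sigma= \nabla_a n_b + \rho\, n_a n_b$, establishing \eqref{IIc}. Tracing this with the induced metric $\bar g^{ab} = g^{ab} - n^a n^b$ (evaluated along $\Sigma$), and again using $\nabla_n n_a\stackrel\Sigma=-\rho\, n_a$ to compute the normal contraction, one obtains
\begin{equation*}
(d-1)H \stackrel\Sigma= \Delta^g\sigma + \rho.
\end{equation*}
Substituting the definition $\rho = -\tfrac{1}{d}(\Delta^g\sigma + \J\sigma)$ and using $\sigma\stackrel\Sigma=0$ to drop the $\J\sigma$ term gives $\Delta^g\sigma\stackrel\Sigma=-d\rho$, whence $(d-1)H \stackrel\Sigma= -(d-1)\rho$, which is \eqref{rhoH}. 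Finally, combining \eqref{rhoH} with the already-derived $\nabla_n n_a\stackrel\Sigma= -\rho\, n_a$ yields \eqref{normaln}.

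There is no real obstacle here: the computation is short once one exploits that the obstruction term $\sigma^d B$ does not contribute to first-order jets of $n^2$ on $\Sigma$. The only mildly delicate point is keeping track of which quantities require $\sigma\stackrel\Sigma=0$ versus the stronger condition $n^2\stackrel\Sigma=1$, and making sure that the symmetry $\nabla_a n_b = \nabla_b n_a$ is used to convert the ``$n^b\nabla_a n_b$'' identity into the transverse derivative $\nabla_n n_a$.
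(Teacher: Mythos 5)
Your proof is correct, and while it shares the central computation with the paper, it takes a genuinely different route to the identity $\rho\stackrel\Sigma=-H$. The shared core is the observation that $n=\nabla\sigma$ is a gradient, so $n^b\nabla_a n_b = \tfrac12\nabla_a(n^2)$, which combined with $n^2 = 1-2\rho\sigma+\sigma^d B$ gives $\nabla_n n_a\stackrel\Sigma=-\rho\, n_a$; both you and the paper feed this into the projected-derivative definition of the second fundamental form to obtain \eqref{IIc}. (You use the double-projected form $\gamma_a{}^c\gamma_b{}^d\nabla_c n_d$ whereas the paper uses $\nabla_a^\top n_b = (\nabla_a - n_a\nabla_n)n_b$; these agree here precisely because $\nabla_n n_b$ is purely normal along $\Sigma$, so no discrepancy arises.) Where you diverge is in establishing \eqref{rhoH}: the paper simply cites Proposition~3.5 of~\cite{Goal} as ``not essentially different,'' whereas you derive $\rho\stackrel\Sigma=-H$ directly by tracing \eqref{IIc} against $\bar g^{ab}=g^{ab}-n^a n^b$ and then substituting the definition $\rho = -\tfrac1d(\Delta\sigma + \J\sigma)$ with $\sigma|_\Sigma = 0$. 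This is a cleaner, self-contained argument that buys independence from the external reference at essentially no extra cost, and it makes the logical dependence \eqref{IIc} $\Rightarrow$ \eqref{rhoH} $\Rightarrow$ \eqref{normaln} explicit; the paper's version treats \eqref{rhoH} as an imported fact and obtains \eqref{normaln} afterwards by combining it with the same $\nabla_n n_b\stackrel\Sigma=-\rho n_b$ identity.
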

\begin{proof}
The proof of the first statement is not essentially different than
that of~\cite[Proposition 3.5]{Goal}, which treats the
case of a conformal unit defining density subject to $I^2_{\sigma}=1$ exactly. 
For the second statement we
compute directly, along~$\Sigma$, beginning with the definition of the
second fundamental form. Since~$n$ has unit length along~$\Sigma$, we have
\begin{eqnarray*}
\II_{ab}&:=&\nabla_a^\top  n_b\\
&=&(\nabla_a-n_a \nabla_n) n_b\\
&=&\nabla_a n_b+\rho n_a n_b  \, .
\end{eqnarray*}
To reach the third line we used~\nn{nid} as follows
\begin{equation}\label{normaln0}
\nabla_n n_b=n^c\nabla_{c}n_b=n^c\nabla_{b}n_c = \frac{1}{2}\nabla_{b} n^2 \stackrel{\Sigma}{=} -\rho n_b\,  .
\end{equation}
This last result also gives Equation~\nn{normaln}.
\end{proof}

\begin{remark}\label{namingmethod}
In fact the Lemma holds when $I^2_{\sigma}=1+{\mathcal O}(\sigma^2)$.
Moreover, this Lemma is the main ingredient needed to  recover the result of~\cite{Goal} that the scale tractor for singular Yamabe structures agrees, along~$\Sigma$, with the normal tractor.
\end{remark}

The algorithm for computing the jets of~$\si$, and then the obstruction density~$\B$, now
proceeds recursively using two key results.
The first of these is
a conformal analogue of Proposition~\ref{ind4R}:
\begin{lemma}\label{ind4c}
Suppose that~${\si}$ is a conformal unit defining density for a
hypersurface~$\Sigma$  in a conformal manifold~$(M^d,\cc)$, with~$d\geq 3$. If~$g \in \cc$ and ~$k\leq d$  is a positive integer, then the quantity
$$
\nabla^k {\si}|_\Sigma ,\quad \mbox{where} \quad \nabla=\mbox{\rm Levi-Civita of } g,
$$ may be expressed as~$\nablab^{k-2}\II$ plus a
linear combination of terms where each term is a homogeneous
polynomial in various derivatives~$\nabla_n^m \rho$, with~$0\leq
m \leq k-2$, times a partial contraction involving the conormal~$n$,
$\nablab^{\ell}\II$ for~$ 0\leq \ell\leq k-3$, and the
Riemannian curvature~$R$ and its covariant derivatives (to order at
most~$k-3$).
\end{lemma}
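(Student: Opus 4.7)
The plan is an induction on $k$, paralleling the Riemannian recursion underlying Theorem~\ref{ind4R} but corrected for the fact that here $n$ fails to be a unit vector field: by \nn{nid}, $n^2 = 1 - 2\rho\sigma + \sigma^d B$, so a transverse derivative of $n$ picks up a $\rho$-proportional term rather than vanishing. The base cases $k=1,2$ are immediate from Lemma~\ref{HII}: $\nabla_a \sigma = n_a$, and $\nabla_a n_b|_\Sigma = \II_{ab} - \rho|_\Sigma\, n_a n_b$, which is precisely of the stated form (leading $\bar\nabla^{0}\II$ plus a degree-one polynomial in $\rho$ with $m=0\leq k-2$ contracted into $n$).

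For the inductive step I would split the outermost derivative as $\nabla_a = \nabla^\top_a + n_a \nabla_n$ and apply this to $\nabla^{k-1}\sigma$. The tangential component $\nabla^\top_a(\nabla^{k-1}\sigma)|_\Sigma$ translates, via the Gauss formula \nn{hypgrad}, into $\bar\nabla_a$ applied to the hypersurface image of $\nabla^{k-1}\sigma|_\Sigma$, plus Weingarten-type corrections involving $\II$ contracted with $n$. The inductive hypothesis provides $\nabla^{k-1}\sigma|_\Sigma = \bar\nabla^{k-3}\II + (\textrm{lower-order terms of the allowed form})$, so applying the extra $\bar\nabla_a$ produces the leading term $\bar\nabla^{k-2}\II$ together with further contributions still of the stated type.

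To handle the transverse component $n_a \nabla_n(\nabla^{k-1}\sigma)|_\Sigma$, I would commute $\nabla_n$ past the lower-order $\nabla$'s---picking up Riemann curvature and its derivatives of order at most $k-3$ via $[\nabla,\nabla] = R$---until $\nabla_n$ eventually falls on a factor $n_b = \nabla_b \sigma$. At that point the key identity
\begin{equation*}
\nabla_n n_b = \tfrac12 \nabla_b(n^2) = -\rho n_b - \sigma \nabla_b\rho + O(\sigma^{d-1}),
\end{equation*}
obtained by differentiating \nn{nid}, trades one transverse derivative for a factor of $\rho$ and a $\sigma$-suppressed remainder. Iterating this rewriting strictly reduces transverse order at each step and terminates on quantities already covered by the inductive hypothesis. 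The hypothesis $k\leq d$ enters precisely here: applying at most $k-1 \leq d-1$ derivatives to $\sigma^d B$ still leaves at least one factor of $\sigma$, so the obstruction remainder contributes nothing along $\Sigma$ and $B$ never enters the final expression.

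The main obstacle is combinatorial bookkeeping: one must verify that no $\nabla_n^m \rho$ with $m > k-2$, no $\bar\nabla^\ell\II$ with $\ell > k-3$, and no covariant derivative of $R$ of order exceeding $k-3$ is ever generated. This is an order-tracking argument running in tandem with the induction, analogous to the transverse-order (or ``normal-order'') analysis in \cite{BrGoCNV}: each reduction step either converts a transverse derivative into a $\rho$ factor (raising the $\rho$-order by at most one while lowering transverse-order by one), or it produces a Riemann commutator whose differential order is bounded by the remaining number of derivatives. A careful accounting shows both bounds are preserved, completing the induction.
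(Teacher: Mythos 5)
Your proposal is correct and follows essentially the same line as the paper's proof: both run the Riemannian induction from Theorem~\ref{ind4R} with the single modification that $\nabla_n n_b = \tfrac12\nabla_b(n^2)$ no longer vanishes but equals $-\rho n_b - \sigma\nabla_b\rho + \mathcal{O}(\sigma^{d-1})$, and both invoke the same counting to see that the $\sigma^d B$ remainder never reaches $\Sigma$ when $k\le d$ and that at most $\nabla_n^{k-2}\rho$ is produced. The only difference is presentational — the paper simply states "the argument is completed by an induction following exactly the same logic" and records the two substitutions, while you spell out the tangential/transverse split and the commutator bookkeeping in a bit more detail.
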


\begin{proof}
We have~$n_a=\nabla_a \si$, and (according to~\nn{IIc})~$\nabla_a\nabla _b\si
\stackrel{\Sigma}{=} \II_{ab}+H n_an_b$.  The argument is now
completed by an induction following exactly the same logic as the
proof of Proposition~\ref{ind4R}. Formally the only new features are
that rather than~$n^2=1$, we now have~\nn{nid}, {\it i.e.},~$n^2=1-2\rho
\si+ \si^d B$, and instead of~$\nabla_a n_b|_\Sigma=\II_{ab}$, two derivatives of $\sigma$
are now governed by Equation~\nn{IIc}.
The second of these is a trivial adjustment to
substitutions, since it just affects the final evaluation along
$\Sigma$. The first means that arguments that previously used~$\nabla
n^2=0$ now incur nonzero terms. These new terms vanish along~$\Sigma$, but are picked up by transverse derivatives.
In particular, we have~$\nabla_n n^2 = -2\si\nabla_n \rho -
2\rho n^2 + \O(\si^{d-1})$. By counting, we see that we encounter~$\nabla_n^\ell n^2$ for~$\ell$ at  most~$k-1$ ($k\leq d$), so the~$\O(\si^{d-1})$ contribution never plays a {\it r\^ole}. Similarly, because of the coefficient 
$\sib$ adjacent to~$\rho$ in the Formula~\nn{nid} for~$n^2$, it follows that~$\nabla_n^{k-2}$ is the  highest ~$\nabla_n$ derivative of 
$\rho$ that is needed for the expression along~$\Sigma$. 
\end{proof}

The task of
computing~$\nabla^k {\si}|_\Sigma~$
in terms of familiar curvature quantities is not yet complete because  derivatives
$\nabla_n^\ell \rho$ remain. These are dealt with as follows. 
\begin{proposition}\label{rho-engine} Let $\sigma$ be a conformal unit defining function.
Then,
for integers~$2\leq k \leq d$,
\begin{equation}\label{rho-step}
\begin{split}
\frac{1}{2} \nabla_n^k I^2_{\sigma} + (d-k) \nabla_n^{k-1} \rho  
\stackrel{\Sigma}{=}
-\nabla_n^{k-1}\big( \gamma^{ab}\nabla_an_b \big) & - 
(k-1) \big[\nabla_n^{k-2}\big( \J+2\rho^2 \big)+(k-2)\rho\nabla_n^{k-2}\rho\big]\\[1mm]& +  \makebox{\rm LTOTs}\, ,
\end{split}
\end{equation}
where~$\makebox{\rm LTOTs}$ indicates additional terms involving lower transverse-order derivatives of~$\si$. 
In particular, for  ~$2\leq k \leq d-1$ we have
\begin{equation}\label{builder}
\begin{split}
\nabla_n^{k-1} \rho  
\stackrel{\Sigma}{=}
-\frac{1}{d-k}\Big(\nabla_n^{k-1}\big( \gamma^{ab}\nabla_a n_b \big) &+ 
(k-1) \big[\nabla_n^{k-2}\big( \J+2\rho^2 \big) +(k-2)\rho\nabla_n^{k-2}\rho\big] \Big)\\&+ \makebox{\rm LTOTs}\, ,\end{split}
\end{equation}
while 
\begin{equation}\label{Bform}
\cB \stackrel{\Sigma}{=} -\frac{2}{d!}\Big(
\nabla_n^{d-1}\big( \gamma^{ab}\nabla_a n_b \big) + 
(d-1)\big[ \nabla_n^{d-2}\big( \J+2\rho^2 \big)
+(d-2)\rho\nabla_n^{d-2}\rho\big] 
 \Big) + \makebox{\rm LTOTs}\, .
\end{equation}
\end{proposition}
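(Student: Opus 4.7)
The plan is to derive a master identity for $\tfrac{1}{2}\nabla_n I^2_\sigma$ in terms of $\rho$, $\gamma^{ab}\nabla_a n_b$ and explicit $\sigma$-correction terms, then apply $\nabla_n^{k-1}$ and restrict to $\Sigma$. Starting from the definition of $\rho$ in \nn{Iform}, equivalently $\nabla.n=-d\rho-\J\sigma$, I split off the normal piece
$$
\nabla.n = \gamma^{ab}\nabla_a n_b + n^a n^b \nabla_a n_b = \gamma^{ab}\nabla_a n_b + \tfrac{1}{2}\nabla_n(n^2).
$$
The tractor metric formula \nn{trmet} gives $n^2 = I^2_\sigma - 2\sigma\rho$; combined with \nn{nid}, which also supplies $n^2 = 1 - 2\sigma\rho + \sigma^d B$ for expanding $n^2\rho$, a rearrangement yields the master identity
$$
\tfrac{1}{2}\nabla_n I^2_\sigma + (d-1)\rho + \gamma^{ab}\nabla_a n_b + \sigma(\J+2\rho^2) - \sigma\nabla_n\rho - \sigma^d B\rho = 0.
$$

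The main calculation is to apply $\nabla_n^{k-1}$ to this identity and evaluate on $\Sigma$. Three facts drive the Leibniz bookkeeping: $\nabla_n\sigma = n^2$ restricts to $1$ on $\Sigma$; $\nabla_n^2\sigma|_\Sigma = -2\rho|_\Sigma$ by differentiating \nn{nid}; and all $\nabla_n^j\sigma|_\Sigma$ with $j\geq 3$ contribute only to LTOTs. The term $\nabla_n^{k-1}(\sigma^d B\rho)|_\Sigma$ is killed for every $k\leq d$, since at least one factor of $\sigma$ survives each Leibniz monomial. Expanding $\nabla_n^{k-1}(\sigma\nabla_n\rho)$, the $j=1$ Leibniz contribution pairs $\nabla_n\sigma|_\Sigma = 1$ with $\nabla_n^{k-1}\rho|_\Sigma$ and shifts the $(d-1)\rho$ coefficient of the master identity to $d-k$; the $j=2$ contribution pairs $\nabla_n^2\sigma|_\Sigma = -2\rho$ with $\nabla_n^{k-2}\rho|_\Sigma$ and, after moving to the right-hand side, supplies the $-(k-1)(k-2)\rho\nabla_n^{k-2}\rho$ summand. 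The $\nabla_n^{k-1}[\sigma(\J+2\rho^2)]$ term similarly produces $(k-1)\nabla_n^{k-2}(\J+2\rho^2)|_\Sigma$ plus LTOTs. Assembling these pieces establishes \nn{rho-step}.

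The two displayed corollaries are then immediate. For $2\leq k\leq d-1$, the identity $I^2_\sigma = 1+\sigma^d B$ forces $\nabla_n^k I^2_\sigma|_\Sigma = 0$, the first term in \nn{rho-step} drops, and division by $d-k\neq 0$ yields \nn{builder}. At $k=d$ the coefficient $(d-k)$ annihilates the $\nabla_n^{d-1}\rho$ term; a final Leibniz calculation gives $\nabla_n^d(\sigma^d B)|_\Sigma = d!\,\cB$, since only the $j=d$ monomial survives and $(\nabla_n\sigma)^d|_\Sigma = 1$. Multiplying through by $2/d!$ produces \nn{Bform}.

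The principal obstacle is the combinatorial bookkeeping: one must cleanly separate the two non-LTOT Leibniz contributions — the $j=1$ shift that converts $(d-1)$ into $d-k$, and the $j=2$ monomial that supplies $-(k-1)(k-2)\rho\nabla_n^{k-2}\rho$ — from all higher-$j$ debris, which by definition of LTOTs is absorbed into the error term. Treating each of the three $\sigma$-coefficient Leibniz sums separately, rather than combining them in advance, appears to be the cleanest route through the expansion.
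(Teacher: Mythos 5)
Your derivation is correct and follows essentially the same route as the paper: you algebraically rearrange the definitions of $\rho$, $n^2$, and $I^2_\sigma$ (using $n^2 = I^2_\sigma - 2\sigma\rho$ and $n^2 = 1-2\sigma\rho+\sigma^d B$) to arrive at exactly the paper's master identity~\nn{stepk}, then apply $\nabla_n^{k-1}$ and restrict to $\Sigma$. The only cosmetic difference is that you begin the manipulation from $\nabla.n = -d\rho - \J\sigma$ rather than from $\tfrac12\nabla_n I^2_\sigma$ directly, and you spell out the Leibniz bookkeeping (the $j=1$ shift from $d-1$ to $d-k$, the $j=2$ source of $-(k-1)(k-2)\rho\nabla_n^{k-2}\rho$, the fact that $\nabla_n^d(\sigma^d B)|_\Sigma = d!\,\cB$) more explicitly than the paper's compressed statement of the $\nabla_n^{k-1}(\sigma f)$ expansion.
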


\begin{proof}
Recall that from~\nn{Iform} and~\nn{trmet} we have~$ I^2_{ \sigma}=n^2+2\rho \si$. 
Thus 
\begin{eqnarray*}
\frac12 \nabla_n  I^2_{\sigma}&=& n^an^b \nabla_a n_b + \rho \, n^2 +\si
\nabla_n \rho \\ & = & -\gamma^{ab} \nabla_a n_b + \nabla^a n_a +
\rho \, n^2 +\si \nabla_n \rho.
\end{eqnarray*}
Now by the definition of~$\rho$ in~\nn{Iform} we have
~$\nabla^an_a= -d\rho -\J \si$. Using this, and once again that~$n^2=1-2\rho\si+\si^d B$, we have
\begin{equation}\label{stepk}
\frac12 \nabla_n  I^2_{\sigma} +(d-1)\rho -\si \nabla_n\rho = -\gamma^{ab} \nabla_an_b - \si (
\J+2\rho^2 - \si^{d-1}\rho B) .
\end{equation}
For~$f$ a conformal density on~$M$, and~$k\geq 1$ an integer, we have 
$$ \nabla^{k-1}_n (\si f) = \si \nabla_n^{k-1} f+
(k-1)n^2\nabla_n^{k-2} f
+\scalebox{.99}{$\frac{(k-1)(k-2)}{2}$}\, (\nabla_n n^2) \nabla_n^{k-3} f+
\cdots +(\nabla_n^{k-2} n^2) f.$$ 
Applying~$\nabla^{k-1}_n$ to both sides of Expression~\nn{stepk}, 
using the last display, and evaluating along~$\Sigma$ gives~\nn{rho-step}.
\end{proof}

Note that the last statement of the above proposition is just the~$k=d$
specialisation of~\nn{rho-step}, using also~\nn{nid}. 
Also, the Formula~\nn{rho-step} extends nicely to the case $k=1$ by the 
 first part of  Lemma~\ref{HII}.
The
right-hand-sides of the above three formul\ae\  involve at most a~$k^{\rm th}$ transverse derivative
of~${\si}$, all of which can be computed using Lemma~\ref{ind4c}
(for~$k\leq d$) except for the~$\nabla_n^\ell \rho$ terms appearing explicitly and those
produced via Lemma~\ref{ind4c}. In any case these involve~$\ell$ satisfying~$\ell \leq k-2$. So, recursively, we have a
computational algorithm which yields the following result.
\begin{theorem}\label{main-calc}
Suppose that~${\si}$ is a conformal unit defining density for a
hypersurface~$\Sigma$ in a conformal manifold~$(M^d,\cc)$, with~$d\geq 3$. If~$g \in \cc$ and ~$k\leq d$ is a positive integer, then the quantity
$$
\nabla^k {\si}|_\Sigma ,\quad \mbox{where} \quad \nabla=\mbox{\rm Levi-Civita of } g,
$$ may be expressed as~$\nablab^{k-2}\II$ plus a
linear combination of terms where each term is a homogeneous
polynomial in various derivatives~$0\leq
m \leq k-2$, times a partial contraction involving the conormal~$n$,
$\nablab^{\ell}\II$ for~$ 0\leq \ell\leq k-3$, and the
Riemannian curvature~$R$ and its covariant derivatives (to order at
most~$k-3$).

We thus obtain a formula for the obstruction density~$\cB$ in terms of the undifferentiated conormal and
the other quantities listed above. 
\end{theorem}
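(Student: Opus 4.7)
The plan is to combine Lemma~\ref{ind4c}, which expresses $\nabla^k\sigma|_\Sigma$ as $\bar\nabla^{k-2}\II$ plus a polynomial in the transverse derivatives $\nabla_n^m\rho|_\Sigma$ (with $0\leq m\leq k-2$) whose coefficients are partial contractions in the conormal, lower derivatives of $\II$, and ambient Riemann curvature, with Proposition~\ref{rho-engine}, which eliminates those transverse $\rho$-derivatives in favour of strictly lower transverse-order data.

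I would proceed by induction on $k\leq d$. The base cases $k=1,2$ are immediate: $\nabla_a\sigma|_\Sigma=\hat n_a$ by definition of a defining density, and $\nabla_a\nabla_b\sigma|_\Sigma=\II_{ab}+H n_a n_b$ with $H=-\rho|_\Sigma$ a trace of $\II$ by Lemma~\ref{HII}. For the inductive step at order $k\geq 3$, Lemma~\ref{ind4c} reduces the problem to eliminating the finite family $\nabla_n^m\rho|_\Sigma$, $0\leq m\leq k-2$. For each such $m\geq 1$ (the $m=0$ case being $\rho|_\Sigma=-H$), apply Equation~\nn{builder} of Proposition~\ref{rho-engine}, which is permissible since $m+1\leq k-1\leq d-1$. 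This produces an equation for $\nabla_n^m\rho|_\Sigma$ in terms of explicit LTOTs---polynomials in $\nabla_n^j\rho|_\Sigma$ with $j<m$---plus the term $\nabla_n^m(\gamma^{ab}\nabla_a n_b)|_\Sigma$, which depends on at most the $(m+2)$-jet of $\sigma$ restricted to $\Sigma$. For $m+2<k$ the primary inductive hypothesis converts this directly into hypersurface data, and iteration strictly decreases transverse order until the base case is reached.

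The obstruction density statement is then the specialisation to $k=d$: here Equation~\nn{builder} fails because the coefficient $\tfrac{1}{d-k}$ diverges, and one must instead invoke Equation~\nn{Bform}, in which the substitution $\nabla_n^d I^2_\sigma|_\Sigma=d!\,\mathcal{B}$ (using $I^2_\sigma=1+\sigma^d B$) plays the r\^ole that $(d-k)\nabla_n^{d-1}\rho$ plays at lower orders. All remaining right-hand-side quantities of~\nn{Bform} are of transverse order at most $d-1$ and are thus handled by the recursion just developed.

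The main technical subtlety arises at the top of each recursive step, when $m+2=k$: here the $(m+2)$-jet expansion of $\nabla_n^{k-2}(\gamma^{ab}\nabla_a n_b)|_\Sigma$ appears to reintroduce $\nabla_n^{k-2}\rho|_\Sigma$ via the leading identity $\gamma^{ab}\nabla_a n_b=-(d-1)\rho+\mathcal{O}(\sigma)$. I would resolve this apparent circularity by observing that the coefficient so produced, $\tfrac{d-1}{d-k+1}$, differs from unity for $k\geq 3$, so that the resulting linear equation for $\nabla_n^{k-2}\rho|_\Sigma$ is non-degenerate and uniquely solvable in terms of strictly lower transverse-order data. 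Tracking these coefficients consistently across the combined recursion is the principal computational burden, but no conceptual obstruction arises.
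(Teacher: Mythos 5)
Your overall scaffold — invoking Lemma~\ref{ind4c} to reduce $\nabla^k\sigma|_\Sigma$ to a polynomial in the $\nabla_n^m\rho|_\Sigma$, $m\leq k-2$, and then using Proposition~\ref{rho-engine} (Equation~\nn{builder} for $k<d$, Equation~\nn{Bform} for $k=d$) to eliminate these recursively — matches the paper's recursion. The gap is in the last paragraph, where you resolve the apparent circularity at the top step.

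Your coefficient count for the reintroduced $\nabla_n^{k-2}\rho|_\Sigma$ is incomplete, and the error is fatal. You expand $\gamma^{ab}\nabla_a n_b=-(d-1)\rho+\mathcal{O}(\sigma)$ and keep only the leading $-(d-1)\rho$; but the $\mathcal{O}(\sigma)$ piece is $\sigma(\nabla_n\rho-2\rho^2-\J)+\mathcal{O}(\sigma^{d-1})$, and when $\nabla_n^{k-2}$ hits $\sigma\nabla_n\rho$ the Leibniz-type formula used in the proof of Proposition~\ref{rho-engine} produces an additional $(k-2)\nabla_n^{k-2}\rho|_\Sigma$. The true coefficient of $\nabla_n^{k-2}\rho|_\Sigma$ in $\nabla_n^{k-2}(\gamma^{ab}\nabla_a n_b)|_\Sigma$ is therefore $-(d-1)+(k-2)=-(d-k+1)$, not $-(d-1)$. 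Feeding this into Equation~\nn{builder} with $k'=k-1$ gives the coefficient $\tfrac{d-k+1}{d-k+1}=1$ exactly, so the linear equation you propose collapses to the tautology $\nabla_n^{k-2}\rho\stackrel{\Sigma}{=}\nabla_n^{k-2}\rho+(\cdots)$ and determines nothing. This is not an artifact: Equation~\nn{stepk} was derived from the same $\rho$-based rearrangement of $n^an^b\nabla_a n_b$ and $\nabla^a n_a$, so substituting the $\rho$-expansion of $\gamma^{ab}\nabla_a n_b$ back into~\nn{builder} is guaranteed to return an identity. You can see the degeneracy already at $k=3$: your claimed coefficient is $\tfrac{d-1}{d-2}$, but including the $\sigma\nabla_n\rho$ contribution (one Leibniz factor of~$n^2\stackrel{\Sigma}{=}1$) brings it to $\tfrac{d-2}{d-2}=1$.

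The circularity you worried about is in fact spurious, and the paper avoids it by expanding $\nabla_n^{k-2}(\gamma^{ab}\nabla_a n_b)|_\Sigma$ through the jet of $\sigma$ directly — commuting $\nabla_n$ past $\gamma^{ab}\nabla_a$ using the Riemann curvature, replacing $\nabla_a n_b|_\Sigma$ with $\II_{ab}+Hn_an_b$ as in Lemma~\ref{HII}, and using the Gau\ss--Codazzi apparatus — rather than through the tautological $\rho$-decomposition. This is precisely what the proofs of Lemma~\ref{lineWillmore} (the $k=3$ case, where $\nabla_n(\gamma^{ab}\nabla_a n_b)|_\Sigma=(d-3)H^2-\Ric(n,n)-\II_{ab}\II^{ab}$, with \emph{no} $\nabla_n\rho$) and of Lemma~\ref{W3} via Appendix~\ref{nablan3rho} exhibit. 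Computed this way, $\nabla_n^{m}(\gamma^{ab}\nabla_a n_b)|_\Sigma$ involves $\nabla_n^j\rho|_\Sigma$ only for $j\leq m-1$ (the assertion "$\ell\leq k-2$" preceding Theorem~\ref{main-calc}), strictly below the quantity being solved for, so the recursion closes without any linear-system inversion at the top step. Your proposal never produces these direct computations, and the linear equation it substitutes for them is degenerate, so the proof does not go through as written.
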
 
\noindent The last statement may be viewed as following from
\nn{Bform} of Proposition~\ref{rho-engine}, by using the first part of
the Theorem to treat the right-hand-side thereof.

\subsubsection{Examples}

By applying the algorithm above Theorem~\ref{main-calc}, an explicit
computation of the obstruction density in any given dimension is
achieved by (i)~computing in detail the lower transverse order terms
(LTOTs) in the expression~\nn{rho-step}, (ii)~evaluating normal
derivatives of~$\gamma^{ab}\nabla_a n_b$ and (iii)~collecting terms
involving normal derivatives of ambient curvatures. The terms
in~\nn{rho-step} involving $(k-2)$ normal derivatives of~$\rho$ are
determined by previous recursions. The~$k=1$ step was encapsulated in
Lemma~\ref{HII}. The case~$k=2$, corresponding to dimension $d=2$ is 
special. We have not treated a tractor calculus when~$d=2$
as this requires additional structure. Nevertheless the ASC problem
does make sense because we may define~$I_{\sigma}^{2}$ to be the
conformally invariant quantity given by $(\nabla \sigma)^2 -\sigma
\Delta \sigma-\sigma^2 \J $ in a choice of scale. The existence
of a conformal unit defining density $\sigma$ satisfying $
I^{2}_\sigma= 1+\sigma^2 B$ can be readily verified by explicit
computations along the lines of Lemma~\ref{obstructed1}. In the following Lemma we interpret $(d-2)\Rho(n,n)$ as zero in dimension $d=2$:

\begin{lemma}\label{lineWillmore}
Let $\sigma$ be a conformal unit defining density. Then, if~$d\geq 2$,
$$
\frac 12\, \nabla_n^2  I^{2}_\sigma + (d-2) \nabla_n \rho\stackrel\Sigma=\IIo_{ab}\IIo^{ab}+(d-2)\Rho(n,n)\, .
$$
In particular, for~$d=2$ we have
$$
{\mathcal B}=0\, ,
$$
and for~$d>2$
\begin{equation}\label{nablanrho}
\nabla_n\rho\stackrel\Sigma=\frac{\IIo_{ab}\IIo^{ab}}{d-2}+\Rho(n,n)\, .
\end{equation}
\end{lemma}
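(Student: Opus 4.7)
The strategy is to specialize the recursion of Proposition~\ref{rho-engine} to $k=2$ and then evaluate the resulting normal derivative of $\gamma^{ab}\nabla_a n_b$ using the basic hypersurface identities of Lemma~\ref{HII}. Setting $k=2$ in \nn{rho-step}, the terms carrying an explicit factor of $\sigma$ in the proof of Proposition~\ref{rho-engine} are handled by $\nabla_n\sigma|_\Sigma = n^2|_\Sigma = 1$, which shifts the coefficient of $\nabla_n\rho$ from $(d-1)$ to $(d-2)$ and leaves no further ``LTOT'' contribution. Combined with $\rho|_\Sigma = -H$ this yields
$$\tfrac 12 \nabla_n^2 I^2_\sigma + (d-2)\nabla_n\rho \stackrel\Sigma= -\nabla_n(\gamma^{ab}\nabla_a n_b) - \J - 2H^2.$$

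The heart of the proof is the direct computation of $\nabla_n(\gamma^{ab}\nabla_a n_b)|_\Sigma$. I will expand $\gamma^{ab} = g^{ab} - n^a n^b$ and apply Leibniz. For the $(\nabla_n\gamma^{ab})\nabla_a n_b$ piece, the identities $\nabla_n n^a \stackrel\Sigma= Hn^a$ and $\nabla_a n_b \stackrel\Sigma= \II_{ab} + Hn_an_b$ from Lemma~\ref{HII} produce $-2H^2$. For $\gamma^{ab}\nabla_n\nabla_a n_b$, I will commute the covariant derivatives via the Ricci identity to obtain
$$\nabla_n\nabla_a n_b = \nabla_a(\nabla_n n_b) - (\nabla_a n^c)(\nabla_c n_b) - n^c n^d R_{cabd},$$
substitute $\nabla_n n_b = -\rho n_b + \O(\sigma)$ from \nn{normaln0} in the first term, and then contract with $\gamma^{ab}$. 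This contraction kills every $n_a$ or $n_b$ factor and, using $n^an^bR_{abcd}=0$ from antisymmetry of Riemann, reduces the curvature piece to $-\Ric(n,n)$. The outcome is
$$\nabla_n(\gamma^{ab}\nabla_a n_b)\big|_\Sigma = (d-3)H^2 - \II_{ab}\II^{ab} - \Ric(n,n).$$

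Substituting back, the two standard identities $\II_{ab}\II^{ab} = \IIo_{ab}\IIo^{ab} + (d-1)H^2$ and $\Ric_{ab} = (d-2)\Rho_{ab} + \J g_{ab}$ (so that $\Ric(n,n)-\J = (d-2)\Rho(n,n)$ along $\Sigma$, where $n^2=1$) cause the $H^2$ and $\J$ terms to cancel, leaving exactly $\IIo_{ab}\IIo^{ab} + (d-2)\Rho(n,n)$ as claimed. The explicit formula \nn{nablanrho} for $\nabla_n\rho$ when $d>2$ is then immediate algebra. The $d=2$ case follows by specialisation: here $I^2_\sigma = 1 + \sigma^2 B$ gives $\tfrac 12\nabla_n^2 I^2_\sigma|_\Sigma = \cB$, while the right-hand side vanishes because $\IIo$ is identically zero for a one-dimensional $\Sigma$ (its second fundamental form being pure trace) and $(d-2)\Rho(n,n)=0$ by the stated convention.

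The main delicacy is the off-$\Sigma$ bookkeeping when taking a further tangential derivative of $\nabla_n n_b$: from \nn{normaln0} the relation $\nabla_n n_b = -\rho n_b - \sigma\nabla_b\rho + \O(\sigma^{d-1})$ is exact only modulo higher $\sigma$ corrections, and one must verify that these corrections drop out on restriction to $\Sigma$ after contraction with $\gamma^{ab}$. One must also take care not to double-count the Ricci-identity curvature contribution, which arises exactly once, from commuting $\nabla_n$ past $\nabla_a$ acting on $n_b$, and not from any independent manipulation of $\gamma^{ab}$ itself.
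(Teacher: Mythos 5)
Your argument follows the paper's proof of Lemma~\ref{lineWillmore} essentially verbatim: specialize~\nn{stepk} to $k=2$ to see the LTOTs drop out, compute $\nabla_n(\gamma^{ab}\nabla_a n_b)|_\Sigma$ via Leibniz and the Ricci identity using Lemma~\ref{HII} and~\nn{normaln0}, and then substitute $\II\II=\IIo\IIo+(d-1)H^2$ and $\Ric(n,n)-\J=(d-2)\Rho(n,n)$. The only cosmetic difference is that the paper packages the surviving $\gamma^{ab}\nabla_a(\nabla_n n_b)$ contribution as $\tfrac12\gamma^{ab}\nabla_a\nabla_b n^2$ and invokes tangentiality of $\gamma^{ab}\nabla_a$, whereas you Leibniz-expand $\gamma^{ab}\nabla_a(-\rho n_b)$ directly; note your phrase ``kills every $n_a$ or $n_b$ factor'' is slightly too strong, since $\gamma^{ab}\nabla_a(-\rho n_b)\stackrel{\Sigma}{=}-\rho\,\gamma^{ab}\nabla_a n_b=(d-1)H^2$ survives and is in fact needed for your stated outcome $(d-3)H^2-\II_{ab}\II^{ab}-\Ric(n,n)$ to be correct.
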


\begin{proof}
Computing one normal derivative of Equation~\nn{stepk} and evaluating the result
along~$\Sigma$ using~$\nabla_n \sigma = n^2 \stackrel\Sigma = 1$, shows that lower order transverse derivative terms in~\nn{rho-step}
are absent when~$k=2$, so that
\begin{eqnarray*}
\frac12 \, \nabla_n^2 I^{\, 2} +(d-2)\nabla_n \rho \stackrel\Sigma= -\nabla_n \big(\gamma^{ab} \nabla_a n_b\big) -  \J -2 \rho^2  \, .
\end{eqnarray*}
From the previous~$k=1$ step (namely Lemma~\ref{HII}) the last term in the above display can be replaced by~$-2H^2$, so it only remains to compute the first normal derivative term on the right hand side:
\begin{equation*}
\begin{split}
\nabla_n\big(\gamma^{ab} \nabla_a n_b\big)&=-2  (\nabla_n n^b)(\nabla_n n_b)+\gamma^{ab}\big( \nabla_a \nabla_n n_b
+ R_{cabd}n^c n^d - (\nabla_a n^c) (\nabla_c n_b)\big)\\
&\stackrel\Sigma=-2H^2+\frac12\, \gamma^{ab}\nabla_a\nabla_b n^2-\Ric(n,n)-\II_{ab}\II^{ab}\\
&=  (d-3)\, H^2 -\Ric(n,n)-\II_{ab}\II^{ab}\, .
\end{split}
\end{equation*}
Here the 
second line relied on Lemma~\ref{HII} and Equation~\nn{normaln0} of its proof, while the 
third employed the fact that the operator~$\gamma^{ab}\nabla_a$ is tangential along~$\Sigma$.
Thus
\begin{equation*}
\begin{split}
\frac 12\, \nabla_n^2 I^2 + (d-2) \nabla_n \rho\stackrel\Sigma=
\II_{ab}\II^{ab}-(d-1)\, H^2+\Ric(n,n)-\J
=\IIo_{ab}\IIo^{ab}+\Ric(n,n)-\J\, .
\end{split}
\end{equation*}
When~$d=2$,~$\Ric=g\J$ and the second fundamental form has no trace-free part, so the obstruction density vanishes as claimed. For~$d>2$, the definition of the Schouten tensor implies that
$\Ric(n,n)\stackrel\Sigma=(d-2)\Rho(n,n)+\J$, which completes the proof.
\end{proof}

Recall that a conformal manifold equipped with a parallel standard tractor $I\neq 0$ is said to be {\em almost Einstein} (AE). In this case~$I$ is a scale tractor,~$\widehat D \sigma$, for some scale~$\sigma$. If~$I^2>0$,  any zero locus~$\Sigma$ of $\sigma$ is a totally umbilic, smoothly embedded hypersurface~\cite{Goal}.
  This  also follows from a  corollary of the above lemma and Lemma~\ref{HII}:
\begin{corollary}\label{gradI}
Let $\sigma$ be a conformal unit defining density for a smoothly embedded hypersurface~$\Sigma$. If~$d\geq3$, 
$$
[\nabla_a I_{\sigma}^B]\stackrel\Sigma=\begin{pmatrix}0\, \\ \IIo_a^b \\ -\frac{1}{d-2}\big[\bar \nabla.\IIo_a-n_a \IIo_{bc}\IIo^{bc}\big] \end{pmatrix}\, .
$$
\end{corollary}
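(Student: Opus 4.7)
The plan is a direct slot-by-slot computation of the tractor gradient $[\nabla_a I_\sigma^B]_g$ in an arbitrary scale $g\in\cc$, applying the normal tractor connection to the explicit form $[I_\sigma^B]_g = (\sigma,\,n_b,\,\rho)$ recorded in \nn{Iform}, and then restricting to $\Sigma$ using the identities already established.

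The top slot is immediate: by definition $n_a = \nabla_a\sigma$, so the top component $\nabla_a\sigma - n_a$ vanishes identically, in particular along $\Sigma$. For the middle slot, which reads $\nabla_a n_b + g_{ab}\rho + \Rho_{ab}\sigma$, the Schouten contribution drops out on $\Sigma$ because $\sigma$ vanishes there. Substituting Equation~\nn{IIc} of Lemma~\ref{HII} for $\nabla_a n_b$ and $\rho|_\Sigma = -H$, then using the splitting $g_{ab}|_\Sigma = \bar g_{ab} + n_a n_b$, this slot collapses to $\II_{ab} - H\bar g_{ab} = \IIo_{ab}$.

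The bottom slot $\nabla_a\rho - \Rho_a{}^b n_b$ requires the substantive work. I would split $\nabla_a\rho = \gamma_a{}^b\nabla_b\rho + n_a\nabla_n\rho$; the tangential part restricts to $\bar\nabla_a(\rho|_\Sigma) = -\bar\nabla_a H$, while the transverse derivative is supplied by Equation~\nn{nablanrho} of Lemma~\ref{lineWillmore}. To convert $\bar\nabla_a H$ into $\bar\nabla.\IIo_a$ I would trace the Codazzi--Mainardi equation~\nn{Mainardi} with $\bar g^{bc}$, use the decomposition $\II_{ab} = \IIo_{ab} + H\bar g_{ab}$, and invoke the Ricci--Schouten relation $\Ric = (d-2)\Rho + \J g$; the contraction of the ambient Riemann tensor against $\bar g^{bc}$ produces $\gamma_a^p\Ric_{pd}n^d$, which collapses on $\Sigma$ to $(d-2)\gamma_a^p\Rho_{pb}n^b$ since $\gamma_a^p n_p = 0$. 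Assembling all pieces and applying the identity $\gamma_a^p\Rho_{pb}n^b = \Rho_{ab}n^b - n_a\Rho(n,n)$, the Schouten contributions arising from the tractor connection, from the transverse derivative of $\rho$, and from the traced Codazzi identity combine to yield the purely intrinsic expression $-\frac{1}{d-2}[\bar\nabla.\IIo_a - n_a\IIo_{bc}\IIo^{bc}]$.

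The principal difficulty is not conceptual but the careful bookkeeping of Schouten contributions in the bottom slot: signs and coefficients must align precisely across the three independent sources. As a sanity check one can first run the computation in a conformally flat background, where the cancellation is transparent, and subsequently verify that the curved Schouten corrections match. A separate consistency check is that for an almost-Einstein scale (where $I^B$ is parallel and $\Sigma$ is totally umbilic), both sides of the claim vanish: the left-hand side by parallelism, and the right-hand side because $\IIo = 0$ forces $\bar\nabla.\IIo_a = 0$ as well.
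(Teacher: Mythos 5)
Your proposal is correct and follows essentially the same line as the paper's own proof: a slot-by-slot computation of the tractor-coupled gradient via \nn{trconn}, with Lemma~\ref{HII} handling the middle slot, and the bottom slot obtained by splitting $\nabla_a\rho$ into tangential and normal parts, invoking Lemma~\ref{lineWillmore} for $\nabla_n\rho$ and the traced Codazzi--Mainardi identity \nn{Mainarditrace} to convert $\bar\nabla_a H$ into $\bar\nabla\ccdot\IIo_a$. The only cosmetic difference is that you re-derive \nn{Mainarditrace} from \nn{Mainardi} rather than citing it directly.
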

\begin{proof}
This result follows by directly computing the tractor-coupled gradient of the scale tractor
$$
[\nabla_a I^B_{\sigma}] = \begin{pmatrix}
0\\\nabla_a n^b+\Rho_a^{b}\si+\rho \delta^b_a \\\nabla_a \rho - \Rho_a^b n_b 
\end{pmatrix}\, ,
$$
and evaluating this along~$\Sigma$.
The result for the middle slot requires only Lemma~\ref{HII}. For the bottom slot, one rewrites
$\nabla_a \rho= \nabla_a^\top \rho + n_a \nabla_n \rho$ the first term of which gives the gradient of mean curvature.
Then one uses the following identity obtained from the trace of the 
  Codazzi-Mainardi Equation~\nn{Mainardi}
 (valid in~$d\geq 3$)
\begin{equation}
\label{Mainarditrace}
\nablab.\IIo_a-(d-2)\nablab_a H=(d-2)(\Rho_{ab}\hat n^b)^\top\, ,
\end{equation}
to obtain
 the divergence of the trace-free second fundamental form (up to ambient curvatures). Treating the normal derivative of~$\rho$ term then requires Lemma~\ref{lineWillmore} and yields the result stated.
\end{proof}
The total umbilicity statement mentioned above the corollary follows by observing
that the parallel condition implies $I^2$ is constant. Our final example is a computation of the obstruction density for surfaces in three dimensions. First we state the main lemma.

\begin{lemma}\label{W3}
Let $\sigma$ be a conformal unit defining density. Then, if~$d\geq3$,
\begin{equation}
\begin{split}
\frac 12\, \nabla_n^3  I_\sigma^2 &+ (d-3) \nabla_n^2 \rho\stackrel\Sigma=
-\frac{1}{d-2}\Big(\nablab.\nablab.\IIo +(d-2)\IIo^{ab}[H\IIo_{ab}+\Rho_{ab}^\top]\Big)\\[2mm]
&-2\IIo^{ab}\IIo_{ac}\IIo^c_b+2\IIo^{ab}W_{cabd}n^cn^d+\frac{d-3}{d-2}\nabla_n G(n,n) +(d-3)(\nabla_n + 2 H) \J\, .
\end{split}
\end{equation}
\end{lemma}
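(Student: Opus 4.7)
My plan is to specialize Proposition~\ref{rho-engine} to $k=3$ and then painstakingly evaluate each ingredient on the right-hand side using the lower-order recursions already in hand. Writing out Equation~\nn{rho-step} for $k=3$ gives
$$
\tfrac12\nabla_n^3 I_\sigma^2+(d-3)\nabla_n^2\rho
\stackrel\Sigma= -\nabla_n^2(\gamma^{ab}\nabla_a n_b)-2\big[\nabla_n(\J+2\rho^2)+\rho\nabla_n\rho\big]+\text{LTOTs},
$$
so the task is: (i) extract the LTOTs explicitly, (ii) compute $\nabla_n^2(\gamma^{ab}\nabla_a n_b)$ along $\Sigma$, and (iii) reorganize everything into the stated intrinsic/extrinsic/Weyl form.

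For (i), I would repeat the proof of Proposition~\ref{rho-engine} with $k=3$, applying $\nabla_n^2$ to \nn{stepk} and using $n^2\stackrel\Sigma=1$ together with $\nabla_n n^2\stackrel\Sigma=-2\rho$ from \nn{normaln0}. The Leibniz expansion of $\nabla_n^2(\sigma\,\nabla_n\rho)$ and $\nabla_n^2[\sigma(\J+2\rho^2)]$ generates the missing terms, which reduce to polynomial expressions in $\rho$, $\nabla_n\rho$, $\J$, $\nabla_n\J$. All such $\rho$'s are then converted to geometric data by substituting $\rho\stackrel\Sigma=-H$ from Lemma~\ref{HII} and $\nabla_n\rho\stackrel\Sigma=\IIo^2/(d-2)+\Rho(n,n)$ from Lemma~\ref{lineWillmore}; the $(d-3)(\nabla_n+2H)\J$ piece of the target is expected to arise directly from the $-2\nabla_n(\J+2\rho^2)$ contribution together with its $k=3$ LTOT corrections.

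For (ii), I would start from the identity for $\nabla_n(\gamma^{ab}\nabla_an_b)$ already computed in the proof of Lemma~\ref{lineWillmore} and apply a further $\nabla_n$. This requires commuting $\nabla_n$ past the tangential projector $\gamma^{ab}$ (producing $(\nabla_n n^a)n^b$ boundary terms which are handled by \nn{normaln}) and past the spatial covariant derivative, generating a Riemann-curvature commutator term $\nabla_n(R_{cabd}n^c n^d\gamma^{ab})$ and further $\II$--$\II$--$\II$ contractions through \nn{II2unit} and \nn{3rd}. The tangential operator $\gamma^{ab}\nabla_a\nabla_b$ then becomes $\bar\nabla^a\bar\nabla^b\II_{ab}$ modulo second-fundamental-form corrections via the Gau\ss\ formula~\nn{hypgrad}, which upon trace-decomposition $\II=\IIo+H\bar g$ and use of the traced Codazzi--Mainardi relation~\nn{Mainarditrace} yields the $\bar\nabla.\bar\nabla.\IIo$ term together with the $(d-2)\IIo^{ab}[H\IIo_{ab}+\Rho_{ab}^\top]$ term.

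For (iii), the ambient Riemann tensor appearing in the commutator is split as $R=W+$ Schouten-algebraic part. The trace-free Weyl piece contracts against $\IIo^{ab}$ to yield $2\IIo^{ab}W_{cabd}n^cn^d$, while the Schouten pieces combine with the explicit $\Rho$ contributions produced by Lemma~\ref{lineWillmore} to form both the $\IIo^{ab}\Rho_{ab}^\top$ term and the Einstein-tensor combination $\nabla_n G(n,n)$ with the predicted $(d-3)/(d-2)$ coefficient. Finally, cubic-in-$\IIo$ terms must collapse to exactly $-2\IIo^{ab}\IIo_{ac}\IIo^c_b$; here an $H^3$--type cancellation between the LTOT contributions and the $\II^3$ terms produced in step~(ii) is expected. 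The main obstacle, and the place I anticipate the most delicate bookkeeping, is precisely step~(iii): organizing the many curvature terms arising from commuting two normal derivatives past $\gamma^{ab}\nabla_a$ and separating Weyl from Ricci cleanly so that the Einstein tensor combination emerges with the correct dimension-dependent coefficient, while simultaneously verifying that all spurious $H\IIo^2$, $H\Rho(n,n)$ and $H^3$ contributions collapse into the quoted form.
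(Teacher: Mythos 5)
Your plan follows essentially the same route as the paper's proof in Appendix~A: specialize Proposition~\ref{rho-engine} to $k=3$, make the LTOTs explicit (the paper's Eq.~\nn{threederivs}), compute $\nabla_n^2(\gamma^{ab}\nabla_a n_b)$ by pushing $\nabla_n^2$ past $\gamma^{ab}\nabla_a$ via curvature commutators (the paper's Lemmas~\ref{leadinglaplace} and~\ref{commutators}), and then reorganize with the Weyl--Schouten split. So the skeleton is correct, and the anticipated cancellations of $H^3$ and $H\IIo^2$ terms do occur.

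However, there is a genuine gap in your step~(iii). You expect the tangential operator to yield $\nablab.\nablab.\IIo$ directly, and the Einstein tensor combination $\frac{d-3}{d-2}\nabla_n G(n,n)$ to ``form'' by algebraic recombination of Schouten pieces. Neither happens that way. The tangential double-derivative computation actually produces $\bar\Delta H$ (since $\nabla_n^2 n_b|_\Sigma$ involves $\nabla_b\rho$ and $\rho|_\Sigma=-H$, see the paper's Lemma~\ref{leadinglaplace}), and the commutator step produces $\nabla_n\Ric(n,n)$, which combines with the $-2\nabla_n\J$ from the LTOTs to give $\nabla_n G(n,n)$ with coefficient~$1$, not $\frac{d-3}{d-2}$. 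To reach the quoted form you must additionally invoke the contracted second Bianchi identity along the normal --- the paper's Lemma~\ref{Einstein},
$$\nabla_n G(n,n)\stackrel\Sigma= -\nablab^a(\Ric_{ab}\hat n^b)^\top+\IIo^{ab}\Ric_{ab}-(d-2)H\Ric(n,n)\, ,$$
applied to a $\frac{1}{d-2}$ portion of $\nabla_n G(n,n)$, together with~\nn{boxH} to convert $\bar\Delta H$ into $\frac{1}{d-2}\nablab.\nablab.\IIo$ plus Schouten terms. It is this Bianchi-identity substitution that simultaneously kills the $\bar\Delta H$ and $H\Ric(n,n)$ terms, creates $\nablab.\nablab.\IIo$, and brings the Einstein coefficient down to $\frac{d-3}{d-2}$. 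Your plan cites the traced Codazzi--Mainardi relation (also a Bianchi consequence), but it does not identify this separate Bianchi lemma as a needed ingredient, so the crucial final step would stall without it.
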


\begin{remark}
In dimensions~$d\geq4$, the Fialkow tensor
is defined by~\cite{Stafford}
\begin{equation}\label{Fial}
{\mathcal F}_{ab}:=\Rho_{ab}^\top-\bar \Rho_{ab}+H\IIo_{ab}+\frac 12\,  \bar g_{ab}H^2\, ,
\end{equation}
and is in fact a weight $w=-2$ tensor density.
Using this and that, via Equation~\nn{one}, three normal derivatives of $I^2_\sigma$ vanishes along $\Sigma$, we may
write the above result as
\begin{equation*}
\begin{split}
 \nabla_n^2 \rho&\stackrel\Sigma=
-\frac{1}{(d-2)(d-3)}\Big(\nablab.\nablab.\IIo +(d-2)(d-4)\IIo^{ab}\bar\Rho_{ab}\Big)\\[2mm]
&-\frac{d-2}{d-3}\, \IIo^{ab}{\mathcal F}_{ab}-\nablab^a\big((\hat n^b \Rho_{ba}){\!}^\top\big)-H\big[(d-2)\Rho(n,n)+\IIo^{ab}\IIo_{ab}\big]+(\nabla_n +  H) \J\, .
\end{split}
\end{equation*}
\end{remark}

In dimensions~$d\geq4$ Lemma~\ref{W3} determines the second normal derivative of~$\rho$ and
when~$d=3$ it gives the obstruction density. The result is the generalization of the 
 Willmore invariant~\nn{Wore} to curved ambient spaces:
\begin{corollary} In dimension~$d=3$, the  obstruction density is given by
\begin{equation}\label{ASCobst2}
{\mathcal B}=-\frac{1}{3}\Big(\nablab_a\nablab_b +H\IIo_{ab}+\Rho_{ab}^\top\Big)\IIo^{ab}\, .
\end{equation}
\end{corollary}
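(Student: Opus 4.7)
The plan is to extract the $d=3$ obstruction density directly from Lemma~\ref{W3} by a purely dimensional reduction. First I would compute the left-hand side along~$\Sigma$ using Theorem~\ref{BIGTHE}: since $I_\sigma^2 = 1+\sigma^d B$ with $B|_\Sigma = \cB$ and $\nabla_n\sigma = n^2 \stackrel\Sigma= 1$ by~\nn{nid}, a routine Leibniz expansion gives $\nabla_n^k I_\sigma^2|_\Sigma = 0$ for $k<d$ and $\nabla_n^d I_\sigma^2|_\Sigma = d!\,\cB$. At $d=3$ this yields $\tfrac12\nabla_n^3 I_\sigma^2|_\Sigma = 3\cB$, while simultaneously the coefficient $(d-3)$ of the $\nabla_n^2\rho$ term in Lemma~\ref{W3} vanishes, so no further information about that derivative is needed.

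Next I would substitute $d=3$ throughout the right-hand side of Lemma~\ref{W3}. The $(d-3)$-weighted contributions $\tfrac{d-3}{d-2}\nabla_n G(n,n)$ and $(d-3)(\nabla_n+2H)\J$ drop out, and $\tfrac{1}{d-2}$ specializes to $1$, leaving
\begin{equation*}
3\cB \stackrel\Sigma= -\nablab.\nablab.\IIo \;-\; H\,\IIo^{ab}\IIo_{ab} \;-\; \IIo^{ab}\Rho_{ab}^\top \;-\; 2\,\IIo^{ab}\IIo_{ac}\IIo^c_b \;+\; 2\,\IIo^{ab}W_{cabd}n^c n^d.
\end{equation*}
The task is then to show that the last two terms vanish in this dimension.

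The Weyl tensor $W$ is identically zero on any three-manifold, eliminating that term outright. For the cubic trace-free second fundamental form term, the key observation is that $\IIo_{ab}$ lives on the two-dimensional surface $\Sigma$; any traceless symmetric $2\times 2$ tensor satisfies $\IIo_a{}^c\IIo_{cb} = \tfrac12|\IIo|^2\,\bar g_{ab}$ (the Cayley--Hamilton identity for a traceless $2\times 2$ matrix), so contracting with $\IIo^{ab}$ and using tracelessness yields $\IIo^{ab}\IIo_{ac}\IIo^c_b = 0$. Rewriting the surviving right-hand side as the operator $(\nablab_a\nablab_b + H\IIo_{ab} + \Rho_{ab}^\top)$ applied to $\IIo^{ab}$ delivers the stated formula. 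The substantive input is Lemma~\ref{W3} itself; the corollary is essentially a dimensional specialization, and the only mild subtlety is noticing that the would-be cubic $\IIo$ correction to the Willmore form is killed precisely by the surface-dimension identity.
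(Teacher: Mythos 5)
Your proposal is correct and is exactly the route the paper implies: the Corollary follows from Lemma~\ref{W3} by setting $d=3$, using $\tfrac12\nabla_n^3 I^2_\sigma|_\Sigma=3\cB$ from Theorem~\ref{BIGTHE}, and noting the $(d-3)$-weighted terms drop out. The paper leaves unstated the two facts you supply, that the Weyl tensor vanishes on a $3$-manifold and that $\IIo^{ab}\IIo_{ac}\IIo^c{}_b=0$ for a traceless symmetric tensor on a surface by Cayley--Hamilton, so your write-up usefully fills in the details.
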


The proof of Proposition~\ref{W3} is involved but conceptually not different to that of Lemma~\ref{lineWillmore}; it is given in Appendix~\ref{nablan3rho}. This confirms 
the result of~\cite{ACF}. The invariant density~$\cB$ was also found using tractor methods in~\cite{YuriThesis}.

\subsection{Constructing hypersurface conformal invariants and holography} 
\label{invts}

The conformal defining density on an ambient manifold  enables a ``holographic'' study of extrinsic as well as intrinsic hypersurface conformal geometry:
The key ingredient is Theorem~\ref{BIGTHE}, which can be used to  proliferate natural invariants  of
the conformal hypersurface structure~$(M,\cc,\Sigma)$ (see Definition~\ref{chi-def}).  Indeed, since the conformal unit defining density~$\si$ is determined by
the data $(M,\cc,\Sigma)$, uniquely modulo~$\O (\si^{d+1})$, up to the
order that~${\si}$ is uniquely determined, the coupled conformal
invariants of the conformal structure and the scale~$ \si$ are
automatically natural invariants of~$(M,\cc,\Sigma)$. Such invariants
are easily constructed using the ambient conformal
tractor calculus applied to~$(M,\cc)$ and~$\si$. Formul\ae\ for conformal  hypersurface invariants obtained by  restricting coupled invariants of the 
ambient structure $(M,\cc,\sigma)$  to~$\Sigma$ 
are termed {\it holographic formul\ae}.

The simplest example of a tractor-valued holographic formula is the restriction of the scale tractor~$I_{\si}$ for a conformal unit defining density~$\si$, which is easily computed using see Lemma~\ref{HII}:
\begin{equation}\label{normaltractor}
I_{\si}^A\big|_\Sigma=N^A\stackrel g=\begin{pmatrix}0\\[1mm] \, \hat n_a\\[1mm]-H\, \end{pmatrix}\, .
\end{equation}
The tractor on the right hand side above is the {\it normal tractor} of~\cite{BEG}.
Therefore, the above is a holographic formula
for the normal tractor.
Another example is the weight $w=-1$, trace-free, symmetric tractor
\begin{equation}
\label{PAB}
P^{AB}:= \hD^A \hD^B \si=\hD^A I_{\si}^B\, ,
\end{equation}
which by construction, for $d\geq 4$, yields a (tractor-valued) hypersurface conformal invariant  upon restriction to $\Sigma$.
In the above, we have used the operator $\hD^A$, which is defined as a map on section spaces $\Gamma(\cT^\Phi M[w])\to
\Gamma(\cT M\otimes \cT^\Phi M[w-1])$ for $w\neq 1-d/2$,  
where $\cT^\Phi M[w]$ denotes a
tractor tensor bundle of arbitrary rank.
In a scale $g$, $$[\hD^A]_g=\big(w,\nabla^\cT_a,-(d+2w-2)^{-1}(g^{ab}\nabla_a^\cT\nabla_b^\cT+ w \J)\big)\, ,$$
and is related to the Thomas D-operator $D^A$ of~\cite{BEG} by $\hD^A=(d+2w-2)^{-1}D^A$. 
Equation~\nn{PAB} can be viewed as   
the conformal analog of Equation~\nn{IIunit}
relating the second fundamental form to a unit defining function. Indeed, in Section~\ref{Lhol} we will use the ambient tractor $P^{AB}$  to build a holographic formula for
the tractor second fundamental form.

We may construct  yet further invariants this way, for example in dimensions~$d\geq 4$, consider the scalar invariant
$$
\big[W_{ABCD}P^{AC}P^{BD}\big]\big|_\Sigma \, ,
$$
where $W_{ABCD}$ is the $W$-tractor of~\cite{GoSrni99} (see also~\cite{GoAdv,GoPetCMP}).
It is an elementary tractor calculus exercise to see that this is simply a multiple of 
$$
W_{abcd}\IIo^{ac}\IIo^{bd} \big|_\Sigma\, ,
$$ where~$W_{abcd}$ is the ambient Weyl curvature. 

It is very
easy to make higher order examples. The key point is that the jets  of objects such as $P^{AB}$ and $W_{ABCD}$
are now canonically defined  (up to the uniqueness bound in the case of $P^{AB}$). 
In particular, the operator obtained through contraction of the scale tractor $I^A$ and the Thomas-$D$ operator $I^A D_A$ gives at the same time (i) a conformal analog of the ambient 
Laplace operator and (ii) along $\Sigma$ a conformally invariant Robin type-operator that can be used to differentiate in the normal direction to the hypersurface. Thus $I\cdot D:=I^A D_A$ is termed the {\it Laplace--Robin operator}, its importance for conformally compact boundary problems is discussed in detail~\cite{GW}. This enables us to perform conformal analogues of comptations such as those leading to Equations~\nn{II2unit} and \nn{trII2unit}.

\subsection{Linking tensor invariants and tractors}

There exists a general ``splitting technology'' (see for example~\cite{Esrni}) relating invariant tensor 
densities and tractors. A particular instance of this is the following construction. First recall that 
there is a canonical bundle inclusion $\ce M[-1] \to \ct M$ given by the canonical section $X^A\in \ct M[1]$. In a scale $g$, $[X^A]_g=(0,0,1)$, and $X^A$ is termed the {\it canonical tractor}. It also induces a surjective bundle map $\cT M[w]\to \ce M[w+1]$ acting on sections by contraction. 
We may extend this to a linear map
~$X_\llcorner:\Gamma(\odot^2 \ct M[w])\to \Gamma(\ct M[w+1])$ acting by contraction with the canonical tractor~$X$.
We now define the canonical map
\begin{equation}\label{q*}
q^*:\ker(X_\llcorner)\to \Gamma(\odot^2T^*M[w+2])\, ,
\end{equation}
which, for some~$g\in \cc$ acts as
$$
q^*:\begin{pmatrix}0&0&0\\0&t^{ab}&t^{a-}\\0&t^{b-}&t^{--}\end{pmatrix}\longmapsto
t_{ab}\, .
$$
The map~$q^*$ can be used to extract  conformal invariants from ambient tractors. 
When interested in hypersurface conformal invariants,
we replace~$\ker(X_\llcorner)$ by~$\ker_\Sigma(X_\llcorner)$ whose elements are tractor sections~$T$ such that~$X_{\llcorner} T=\sigma S$
for some smooth~$S$. This gives a map~$
q_\Sigma^*:\ker_\Sigma(X_\llcorner)\to \Gamma_\Sigma(\odot^2T^*M[w+2])
$,
where~$\Gamma_\Sigma$ denotes equivalence classes of sections~$T\sim \tilde T + \sigma S$ with $S$ smooth. We may identify these with their values along~$\Sigma$.

An application of this construction is  the following result which shows that the tensor~$P^{AB}$ of Equation~\nn{PAB} is the tractor analog of the trace-free second fundamental form while its normal derivative encodes the  invariant Fialkow tensor of Equation~\nn{Fial}.
In the following proposition, we introduce the {\it rigidity density}~$K:=\IIo_{ab}\IIo^{ab}\in\Gamma(\ce \Sigma[-2])$.
\begin{proposition}\label{firsthol}
Let $\sigma$ be a conformal unit defining density, then if~$d\geq 3$,
\begin{equation}\label{theequation}
q^*_\Sigma(P^{AB})=\IIo_{ab}\, \end{equation}
and  $$K_{\rm ext}=P^{AB}P_{AB}\stackrel\Sigma= K\, .$$
For~$ d\geq 4$
\begin{equation}\label{holFialkow}
q_\Sigma^*\, \Big(I\cdot \hD\, P^{AB}+h^{AB}\frac{K_{\rm ext}}{d-2}\Big)^{\!\top}=
-(d-3){\mathcal F_{ab}}+\frac{3\, \bar g_{ab}\, K}{2(d-2)}
\, .
\end{equation}
\end{proposition}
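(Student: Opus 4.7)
The plan is to work in a chosen scale $g\in \cc$ and reduce the three claims to slot computations in the $+/a/-$ tractor splitting, translating between tractor components and Riemannian hypersurface quantities along $\Sigma$ via Lemma~\ref{HII} and Corollary~\ref{gradI}. The key structural observation is that $I^B=\hD^B\sigma$ has tractor density weight zero, so the formula for $\hD^A$ on a weight-zero argument annihilates its top ($A$) slot identically; by the symmetry $P^{AB}=P^{BA}$ (which follows from $\hD^A\hD^B\sigma=\hD^B\hD^A\sigma$ on densities), both top rows of $P^{AB}$ vanish, giving $X_A P^{AB}=0$ and hence ensuring that $q^*_\Sigma$ is applicable without any $\sigma$-correction.

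Parts (i) and (ii) then follow quickly. For (i), the middle slot of $[\hD^A I^B]_g$ is the tractor gradient $\nabla^\cT_a I^B$, whose middle ($B$) component by Corollary~\ref{gradI} restricts along $\Sigma$ to $\IIo_a{}^b$; lowering $b$ yields $q^*_\Sigma(P^{AB})=\IIo_{ab}$. For (ii), the vanishing of the top rows forces the tractor-metric contraction $P^{AB}P_{AB}$ to collapse onto its middle-middle slot contraction $p^{ab}p_{ab}$ (cross terms coupling a top slot with a bottom slot all multiply zero); restricting to $\Sigma$ and using (i) gives $K_{\rm ext}|_\Sigma=\IIo_{ab}\IIo^{ab}=K$.

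Part (iii) is the technical heart. The approach is to expand $I\cdot \hD\, P^{AB}$ in the scale $g$ using the explicit slot formula for $\hD^C$ and the tractor connection, then apply the tangential projection $\top$ and restrict to $\Sigma$ before extracting the middle-middle slot via $q^*_\Sigma$. The resulting expression involves: (a)~$\nabla_n$-derivatives of the middle-middle slot of $P^{AB}$, producing $\nabla_n\IIo_{ab}$-type contributions; (b)~cross-contributions from the bottom-middle slot of $P^{AB}$, which by Corollary~\ref{gradI} involves $\bar\nabla\cdot\IIo$ and $\IIo_{ab}\IIo^{ab}$; and (c)~ambient Schouten and Weyl terms coming from the tractor connection and from the $\Delta^\cT$ piece inside $\hD^C$. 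These are simplified using the Codazzi--Mainardi equation~\eqref{Mainardi} and its trace~\eqref{Mainarditrace}, Lemma~\ref{lineWillmore} to substitute $\nabla_n\rho=\IIo_{ab}\IIo^{ab}/(d-2)+\Rho(n,n)$, and the Gauss equation~\eqref{Gauss} to convert the intrinsic Schouten $\bar\Rho_{ab}$ appearing inside $\mathcal{F}_{ab}$ into ambient quantities. The role of the trace adjustment $h^{AB}K_{\rm ext}/(d-2)$ is to cancel exactly a $\bar g_{ab}K$-proportional trace piece generated by the $\Delta^\cT$ term in $\hD^C$, so that the residual assembles precisely into $-(d-3)\mathcal{F}_{ab}+3\bar g_{ab}K/(2(d-2))$ via the defining formula~\eqref{Fial}. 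The main obstacle is the sheer volume of bookkeeping: each tractor slot produces several curvature terms whose identifications with $\mathcal{F}_{ab}$ require systematic use of Gauss, Codazzi--Mainardi and the normal-jet identities of Section~\ref{comp-ext}. However, no new conceptual ingredient is required beyond the lemmas already developed in this section.
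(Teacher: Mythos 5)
Your approach to parts (i) and (ii) matches the paper exactly: you use the weight-zero argument to get $X_A P^{AB}=0$, read off the middle-middle slot via Corollary~\ref{gradI}, and then observe that the vanishing top row/column collapses $P^{AB}P_{AB}$ onto the middle contraction — all correct.

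For part (iii), the computational strategy you describe (expand $I\cdot\hD$ in a scale, extract the middle-middle slot, reduce via the jet calculus of Section~\ref{comp-ext}, Lemma~\ref{lineWillmore}, Codazzi--Mainardi and the traced Gauss relation) is indeed what the paper does. However, there is a genuine step you skip and a mischaracterization of a term's role. The map $q^*_\Sigma$ is only defined on $\ker_\Sigma(X_\llcorner)$, and unlike $P^{AB}$ itself, the combination $I\cdot\hD\,P^{AB}+h^{AB}K_{\rm ext}/(d-2)$ does not obviously lie there: $I\cdot\hD$ does not preserve the kernel, since acting with $I\cdot\hD$ on $X_A P^{AB}=0$ produces the term $I_A P^{AB}$. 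The paper handles this with the key identity $I^A P_{AB}\stackrel{\Sigma}{=}X_B\,K_{\rm ext}/(d-2)$ (Equation~\eqref{I.P}, derived from $\hD_B I^2\stackrel{\Sigma}{=}0$ and the modified Leibniz rule), which shows precisely that adding $h^{AB}K_{\rm ext}/(d-2)$ restores the kernel condition. So the structural purpose of that correction is to make $q^*_\Sigma$ applicable at all, \emph{not} (as you claim) to ``cancel exactly a $\bar g_{ab}K$-proportional trace piece'' — indeed the final answer retains a $\bar g_{ab}K$ term, and the $h^{AB}K_{\rm ext}/(d-2)$ contribution actually \emph{adds} $\bar g_{ab}K/(d-2)$, combining with the $\nabla_n\rho$ contribution before the Fialkow--Gauss identity is applied. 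Without the kernel-membership verification, your reduction to slot arithmetic is formally unjustified even though the arithmetic would eventually come out right, so you should insert that step (which is where the modified Leibniz rule for $\hD$ first enters the argument).
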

\begin{proof}
First   note that
\begin{equation}\label{XPzero}
X_A P^{AB}=0\, ,
\end{equation}
because~$I=I_\sigma$ has weight zero and~$X\cdot\hD\,  T=wT$ for any weight~$w\neq 1-\frac d2$ tractor. So, in particular $P_{AB}\in \ker_\Sigma(X_\llcorner)$.
Using 
that in a choice of scale $g$, the 
 Thomas D-operator acting on weight $w$ tractors 
is given by 
\begin{equation}
\label{Dform}
[D^A]_g=\big((d+2w-2)w, (d+2w-2)\nabla_a^\ct,-(\Delta+w\J)\big)\, ,
\end{equation}
as well as Formula~\nn{Iform} for the scale tractor,  we see that Equation~\eqref{theequation}  follows  from Corollary~\ref{gradI}. The result for the rigidity density is an immediate consequence.

Next we must  verify that~$I\cdot \hD P^{AB}+h^{AB}\frac{K_{\rm ext}}{d-2}\in\ker_\Sigma(X_{\llcorner})$. 
Acting with~$I\cdot\hD$ on Equation~\nn{XPzero} 
we have
\begin{equation*}\begin{split}
0&\stackrel\Sigma=(I\cdot\hD X_A) P^{AB} + X_A\,  I\cdot \hD P^{AB} 
\stackrel\Sigma=X_A \Big(I\cdot \hD  P^{AB}+h^{AB}\frac{K_{\rm ext}}{d-2}\Big) \, .
\end{split}\end{equation*}
The last equality used~$I\cdot\hD X_A= I{}_A$ and that 
\begin{equation}\label{I.P}
\begin{split}
I^A P_{AB}&=I^A \hD_B I{}_A=
\frac12 \hD_B  I^{\, 2}+\frac{1}{d-2}X_B P^{AC}P_{AC}\stackrel\Sigma=X_B\frac{K_{\rm ext}}{d-2} \, ,
\end{split}
\end{equation}
The second step can be easily explicitly verified or follows from Equation~\nn{failure} below, while
the final step requires $d\geq 3$.

The remainder of the  proof is based on the technology introduced in Section~\ref{comp-ext}. In particular, 
computing along~$\Sigma$:
\begin{eqnarray*}
q_\Sigma^*\Big(I\cdot \hD P^{AB}\!\!&+&h^{AB}\frac{K_{\rm ext}}{d-2}\Big)^{\!\top}\ \stackrel\Sigma=\ 
 \, q_\Sigma^*\Big([\nabla_n+H]P^{AB}\ +\ h^{AB}\frac{K_{\rm ext}}{d-2}\Big)^{\!\top}\\
&\stackrel\Sigma=&\Big[(\nabla_n+H)(\nabla_a n_b + \sigma\Rho_{ab}+\rho g_{ab})+g_{ab}\frac{K}{d-2}\Big]^\top\\
&\stackrel\Sigma=&\Big[\frac12\nabla_a\nabla_b(1-2\rho \sigma)+[\nabla_n,\nabla_a] n_b 
+\Rho_{ab}+\gamma_{ab}\Big(\nabla_n\rho+\frac{K}{d-2}\Big)+H\IIo_{ab}\Big]^\top\\
&\stackrel\Sigma=&
2H\IIo_{ab} +R_{cabd}n^cn^d-\II^2_{ab}
+\Rho^\top_{ab}+\bar g_{ab}\Big(\nabla_n\rho+H^2+\frac{K}{d-2}\Big)\\
&\stackrel\Sigma=&
W_{cabd}\, \hat n^a\hat n^b
-\IIo^2_{ab}+\frac{2\bar g_{ab}K}{d-2}\, .
\end{eqnarray*}
The first two equalities above  use again the explicit formula for Thomas D-operator~\nn{Dform}   and for the scale tractor~\nn{Iform}.
The next line relies on the fact that~$ \sigma$ is a conformal unit defining density and the line thereafter follows directly the method of Section~\ref{comp-ext}
for computing jets of~$ \si$. The last equality required Lemma~\ref{lineWillmore} for the normal derivative of~$\rho$. Finally,
tracing the Gau\ss\ Equation~\nn{Gauss} leads to the following identity 
\begin{equation*}\label{Gausstrace}
\IIo^2_{ab}\!-\frac12\, \bar g_{ab}\scalebox{1.2}{$\frac{\IIo_{cd_{\phantom{a}\!\!}}\!\IIo^{cd}}{d-2}$}-W_{cabd}\, \hat n^c\hat n^d =(d-3)\Big(\Rho_{ab}^\top-\bar \Rho_{ab}+H\IIo_{ab}+\frac 12\,  \bar g_{ab}H^2\Big)\, .
\end{equation*}
The  result follows upon combining the above two displays and the  definition of the Fialkow tensor in Equation~\nn{Fial}.
\end{proof}

\begin{corollary}\label{Kdot}
If~$d\geq 3$, 
$$ I\cdot \hD\,  K_{\rm ext}\stackrel\Sigma=-2(d-3)L\, ,$$
where~$L:=\IIo_{ab }{\mathcal F}^{ab}\in\Gamma(\ce \Sigma[-3])$.
\end{corollary}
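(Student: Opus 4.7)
The plan is to exploit the product structure $K_{\rm ext}=P^{AB}P_{AB}$, apply the Laplace--Robin operator via Leibniz along $\Sigma$, and then plug in both halves of Proposition~\ref{firsthol}.

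A direct computation in a choice of scale shows that for any weight~$w$ tractor density~$f$,
\[
I\cdot\hat D\, f\,\big|_\Sigma \;=\; \big(n\cdot\nabla^\cT + w\rho\big)f\,\big|_\Sigma \;=\; \big(n\cdot\nabla^\cT - wH\big)f\,\big|_\Sigma,
\]
since the $\sigma$-prefactor of the second-order piece of $\hat D$ vanishes on~$\Sigma$. This restriction formula is additive in weight, so along~$\Sigma$ the operator $I\cdot\hat D$ satisfies a Leibniz rule; coupled with tractor metric compatibility of~$\nabla^\cT$, this gives
\[
I\cdot\hat D\, K_{\rm ext}\,\big|_\Sigma \;=\; 2\, P^{AB}\big(I\cdot\hat D\, P_{AB}\big)\big|_\Sigma .
\]

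Next I would use the trace-freeness $h_{AB}P^{AB}=0$ to rewrite the right-hand side as $2\,P^{AB}T_{AB}|_\Sigma$, where
$$T^{AB}\;:=\;I\cdot\hat D\, P^{AB}+h^{AB}\tfrac{K_{\rm ext}}{d-2}.$$
The proof of Proposition~\ref{firsthol} establishes that $X_AT^{AB}|_\Sigma=0$, and symmetry of $T^{AB}$ gives $X_BT^{AB}|_\Sigma=0$ as well. Combined with $X_AP^{AB}=0$ from~\nn{XPzero}, this implies that in the tractor contraction $P^{AB}T_{AB}|_\Sigma$ every component involving the top slot of either factor drops out, leaving only the contribution where both free indices sit in the middle slot:
\[
P^{AB}T_{AB}\big|_\Sigma \;=\; \IIo^{ab}\,\big(T_{ab}\big)^{\!\top}\big|_\Sigma ,
\]
where $q^*_\Sigma(P^{AB})=\IIo_{ab}$ from~\nn{theequation} identifies the middle-slot component of $P^{AB}$, and tangentiality of~$\IIo^{ab}$ allows the $\top$ projection to be inserted freely.

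Finally, the holographic formula~\nn{holFialkow} gives $(T_{ab})^{\!\top}|_\Sigma=-(d-3)\mathcal F_{ab}+\tfrac{3\bar g_{ab}K}{2(d-2)}$; since $\bar g^{ab}\IIo_{ab}=0$, the trace term is annihilated upon contraction with $\IIo^{ab}$, yielding
\[
P^{AB}T_{AB}\big|_\Sigma \;=\; -(d-3)\,\IIo^{ab}\mathcal F_{ab} \;=\; -(d-3)\,L,
\]
and doubling gives $I\cdot\hat D K_{\rm ext}|_\Sigma=-2(d-3)L$ as claimed. The only potentially subtle step is verifying Leibniz for $I\cdot\hat D$ across factors of different conformal weight along $\Sigma$; this is immediate from the restriction formula above, whose weight-dependence is linear in~$w$ and therefore produces no cross terms between the two factors.
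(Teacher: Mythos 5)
Your argument for $d\geq 4$ is correct and is exactly the elaboration the paper intends by saying the corollary ``follows directly from the proposition using the properties of $P^{AB}$.'' You apply $I\cdot\hD$ to $K_{\rm ext}=P^{AB}P_{AB}$ via the restricted Leibniz rule, peel off the trace part to produce $T^{AB}:=I\cdot\hD\,P^{AB}+h^{AB}K_{\rm ext}/(d-2)$, observe that both $P^{AB}$ and $T^{AB}$ (along $\Sigma$) lie in $\ker(X_\llcorner)$ so that only the middle--middle slot of the contraction survives, and then feed in Equations~\nn{theequation} and~\nn{holFialkow}; the trace term $\tfrac{3\bar g_{ab}K}{2(d-2)}$ dies against trace-free $\IIo^{ab}$, leaving $-(d-3)L$, and doubling gives the claim. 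Your justification of the Leibniz property via the restriction formula $I\cdot\hD|_\Sigma=\nabla_n-wH$ is a valid alternative to invoking Equation~\nn{failure} (where the cross term carries a factor $X\cdot(\hD\cdot)(\hD\cdot)$ and $I\cdot X=\sigma\to 0$ on~$\Sigma$); the two arguments are equivalent in content.

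However, the corollary is asserted for $d\geq 3$ and your argument only covers $d\geq 4$: Equation~\nn{holFialkow}, which you use in the final step, is stated only for $d\geq 4$, and the Fialkow tensor $\mathcal F_{ab}$ (hence $L$) is not defined when $\dim\Sigma=2$, because it involves the intrinsic Schouten tensor $\bar\Rho_{ab}$ of the surface. In dimension $d=3$ the right-hand side $-2(d-3)L$ is to be read as~$0$, and the paper disposes of this case by saying it ``is easily verified by direct calculation.'' You should supply that verification (for instance by computing $I\cdot\hD K_{\rm ext}=(\nabla_n+2H)K_{\rm ext}$ along~$\Sigma$ directly in a scale, using Corollary~\ref{gradI} and the jet technology of Section~\ref{comp-ext}) or at least note explicitly that your proof only treats $d\geq 4$ and that $d=3$ requires a separate check.
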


\begin{proof}
For $d\geq 4$, the result follows directly from the proposition using the properties of~$P^{AB}$.
For $d=3$, it is easily verified by direct calculation.
\end{proof}

\begin{remark}
Since Equation~\nn{holFialkow} exactly matches~\nn{Gausstrace}, Proposition~\ref{firsthol} allows us to interpret the Fialkow tensor 
as the normal derivative of the trace-free second fundamental form canonically defined by the conformal unit defining density. Later, we will see that quantity $L$ plays the {\it r\^ole} of a rigidity density for embedded volumes.
\end{remark}

The methods used to prove Proposition~\ref{firsthol} can be employed to  generate a set of rank two, symmetric, conformally invariant, extrinsic hypersurface invariants from~$({ I}.\hD)^k P^{AB}$, whose first two elements are the trace-free second fundamental form and the Fialkow tensor.

\section{Conformal hypersurface  tractor calculus}
\label{hollow}

In the previous section we established that conformal hypersurfaces can be naturally treated via tractors. Here we review and extend the known tractor hypersurface calculus
using the conformal unit defining density.
Key results are tractor analogues of the Gau\ss\ formula and second fundamental form. We also show how to relate ambient and hypersurface Thomas D-operators.

\subsection{Tractor second fundamental form}
\label{Lhol}

We first need a certain differential splitting operator~$q$
mapping weighted,  trace-free symmetric two-forms into rank two, weight~$w\neq1-d,-d$, symmetric tractors; this can be viewed as a natural dual of the map $q^*$ defined in Equation~\nn{q*}. For dimensions $d\geq3$ this is given by (see for example~\cite{Esrni}):
$$\Gamma\big(\odot_\circ^2 T^*M[w+2]\big)
\ni \, t_{ab}
\stackrel q\longmapsto
\begin{pmatrix}0&0&0\\[1mm]0&t^{ab}&-\frac{ \nabla.t^a}{d+w}\\[1mm]
0&-\frac{ \nabla.t^b}{d+w}&\frac{\nabla.\nabla.t+(d+w)\Rho_{ab}t^{ab}}{(d+w)(d+w-1)}\end{pmatrix}
=:[T^{AB}]\in \Gamma\big(\ct^{(AB)_\circ} M[w]\big) \, .$$
When in addition $w\neq-\frac d2$, the conditions
$
D_A T^{AB}=0=X_A T^{AB}=T_A{}^A
$ characterise the image of this map.

\begin{remark}
When~$t_{ab}\in 
\Gamma\big(\odot_\circ^2 T^*M[3-d]\big)$,
 the weight $-d-1$~density $$\big(\nabla_a\nabla_b+\Rho_{ab}\big) t^{ab}\, ,$$
appearing as the residue of the pole at $w=1-d$ in the above display,
is  conformally invariant.
\end{remark}

On the conformal manifold~$(\Sigma,\cc_\Sigma)$,
applying the map~$q$
 to  the trace-free second fundamental form~$\IIo_{ab}\in \Gamma(\odot^2_\circ T^*\Sigma[1])$ gives the tractor second fundamental form~\cite{Grant,Stafford}:
 \begin{definition}
 Let~$d\geq 4$.
The tractor second fundamental form~$L^{AB}\in \Gamma\big(\ct^{(AB)_\circ} \Sigma[-1]\big)$ is defined by
\begin{equation}\label{LAB}
[L^{AB}]:=q(\IIo_{ab})\stackrel{\bar g\in\cc_{\Sigma}}=\begin{pmatrix}0&0&0\\[1mm]0&\IIo_{ab}&-\frac{ \nablab.\IIo_a}{d-2}\\[1mm]
0&-\frac{ \nablab.\IIo_b}{d-2}&\frac{\nablab.\nablab.\IIo+(d-2)\bar P_{ab}\IIo^{ab}}{(d-2)(d-3)}\end{pmatrix}
\, .
\end{equation}
 \end{definition}

\begin{remark}\label{dimreg}
A dimensional continuation argument can be used to obtain the~$d=3$ obstruction density from the 
tractor second fundamental form: In dimensions~$d\geq 4$, the  Fialkow--Gau\ss\ equation~\nn{Gausstrace} implies
$$\bar P_{ab} \IIo^{ab}=P_{ab}^\top\IIo^{ab}+H\IIo_{ab}\IIo^{ab} + \frac{\IIo^{ab}W_{cabd}\, \hat n^c \hat n^d-\IIo_{ab}\IIo^{ac}\IIo_c^b}{d-3}\, ,~$$
so that the part of~$L^{AB}$ singular when~$d=3$ can be rewritten as
$$
\frac{\nablab.\nablab.\IIo+(d-2)(\Rho_{ab}^\top + H \IIo_{ab})\IIo^{ab}}{(d-2)(d-3)}+\frac{\IIo^{ab}W_{cabd}\, \hat n^c \hat n^d-\IIo_{ab}\IIo^{bc}\IIo_c^a}{(d-3)^2}\, .
$$
The numerator of the second term in this expression vanishes identically in~$d=3$ while the first numerator
evaluated at~$d=3$ is 
$$
\big(\nablab_a \nablab_b +\Rho_{ab}^\top + H \IIo_{ab}\big)\IIo^{ab}\, ;
$$
this is precisely the obstruction density~\nn{ASCobst2}.
\end{remark}

Corollary~\ref{gradI} and Proposition~\ref{firsthol} suggest that a holographic formula for the tractor second fundamental form
can be built from~$P^{AB}=\hD^A  I^B$. For that, we need the following result:

\begin{lemma}
Let~$\sigma$ be a conformal unit defining density and~$d\geq4$, then for~$g\in\cc$,
$$
 [\Delta
 I^A_{}]\stackrel\Sigma=
\begin{pmatrix}
0\\[2mm]
\nablab.\IIo_a-\hat n_aK\\[1mm]
-\frac{\nablab.\nablab.\IIo + (d-2)\IIo^{ab}\bar\Rho_{ab}}{d-3}
+2HK-\frac{(3d-8)L}{d-3}
\end{pmatrix}\, .
$$

\end{lemma}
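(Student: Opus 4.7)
The plan is to compute $[\Delta I^A]_g$ slot by slot, by applying the tractor connection formula twice to $I^A_{\sigma}=(\sigma, n_a, \rho)$ and then taking the metric trace. The top and middle slots fall out quickly; the bottom slot requires the full recursive machinery of Section~\ref{comp-ext}.

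\emph{Top slot.} From the tractor connection, the top slot of $\nabla_c\nabla_b I^A$ equals $-[\nabla_b n_c + \Rho_{bc}\sigma + \rho\, g_{bc}]$. Contracting with $g^{bc}$ gives $-\Delta\sigma - \J\sigma - d\rho$, which vanishes identically from the definition $\rho=-\tfrac{1}{d}(\Delta+\J)\sigma$.

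\emph{Middle slot.} Using the Laplacian--gradient commutator $\Delta n^a = \nabla^a\Delta\sigma + \Ric^a{}_b\, n^b$, the decomposition $\Ric_{ab}=(d-2)\Rho_{ab}+\J g_{ab}$, and the contracted Bianchi identity $\nabla^b\Rho_{ba}=\nabla_a\J$, the bulk middle slot collapses to $-(d-2)[\nabla^a\rho-\Rho^{ab}n_b]$. Its restriction to $\Sigma$ is then immediate from the bottom slot of Corollary~\ref{gradI}, yielding $\nablab.\IIo_a-\hat n_a K$.

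\emph{Bottom slot (main obstacle).} The bulk bottom slot is
\[
\Delta\rho - n^a\nabla_a\J - 2\Rho^{ab}\nabla_a n_b - |\Rho|^2\sigma - \rho\,\J.
\]
Using Lemma~\ref{HII} (which gives $\nabla_a n_b|_\Sigma=\IIo_{ab}+Hg_{ab}$) reduces the restriction to $\Delta\rho|_\Sigma - \nabla_n\J - 2\Rho^{ab}\IIo_{ab} - H\J$. The crucial step is decomposing $\Delta\rho|_\Sigma$. Because a conformal unit defining density satisfies $\nabla_n n|_\Sigma=Hn$ (Equation~\nn{normaln}), a Gauss-style splitting of the Laplacian acting on a density yields
\[
\Delta\rho|_\Sigma = -\bar\Delta H + (d-2)H\,\nabla_n\rho|_\Sigma + \nabla_n^2\rho|_\Sigma.
\]
Equation~\nn{nablanrho} supplies $\nabla_n\rho|_\Sigma$, and Lemma~\ref{W3} specialised to $d\geq 4$ (using that $I^2_\sigma = 1+\sigma^d B$ forces $\nabla_n^3 I^2|_\Sigma = 0$) supplies $\nabla_n^2\rho|_\Sigma$. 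The resulting expression contains $\bar\Delta H$, $\nablab^a[(\hat n^b\Rho_{ba})^\top]$, the double-divergence $\nablab.\nablab.\IIo$, and a tangle of curvature cross-terms.

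The final collection exploits three structural identities. The trace of the Codazzi--Mainardi relation \nn{Mainarditrace}, divergenced once more, combines $-\bar\Delta H$ with $-\nablab^a[(\hat n^b\Rho_{ba})^\top]$ to produce $-\nablab.\nablab.\IIo/(d-2)$, which adds to the double-divergence already present from $\nabla_n^2\rho|_\Sigma$ and delivers the claimed prefactor $-1/(d-3)$. The Fialkow tensor definition \nn{Fial} identifies $L=\IIo^{ab}\mathcal F_{ab}$ inside the combination of $\IIo^{ab}\Rho^\top_{ab}$, $\IIo^{ab}\bar\Rho_{ab}$ and $HK$ terms, while the Fialkow--Gau\ss\ identity displayed in the proof of Proposition~\ref{firsthol} absorbs the residual $\IIo^{ab}W_{cabd}\hat n^c\hat n^d$ and cubic-in-$\IIo$ contributions from Lemma~\ref{W3}. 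The $\nabla_n\J$, $H\J$, and $H\Rho(n,n)$ contributions must cancel exactly, leaving the stated $2HK$ and $-(3d-8)L/(d-3)$ coefficients. The principal difficulty is this bookkeeping: tracking all the curvature cross-terms through the substitution of Lemma~\ref{W3} and recognising, at the end, precisely the canonical conformal hypersurface invariants $K$ and $L$.
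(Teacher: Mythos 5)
Your route is essentially the paper's. The bulk formula
\[
[\Delta I^A]_g=\begin{pmatrix}0\\ \Delta n_a+2\nabla_a\rho\\ (\Delta-\J)\rho-2\Rho^{ab}\nabla_an_b-\nabla_n\J\end{pmatrix}
\]
is what the paper obtains by applying the tractor connection twice and tracing; the top slot vanishes identically from $\rho=-\tfrac1d(\Delta+\J)\sigma$; and the paper itself remarks that the middle slot can be read off from Corollary~\ref{gradI} and the symmetry of $P^{AB}$, which is precisely your shortcut. For the bottom slot, the paper carries out exactly your plan: Lemma~\ref{Laplaces} to split $\Delta\rho|_\Sigma$ into $-\bar\Delta H+\nabla_n^2\rho+(d-2)H\nabla_n\rho$, Equation~\nn{nablanrho} for $\nabla_n\rho|_\Sigma$, Lemma~\ref{W3} (with $\nabla_n^3 I^2_\sigma|_\Sigma=0$ for $d\ge4$) for $\nabla_n^2\rho|_\Sigma$, Equation~\nn{boxH} to trade $\bar\Delta H$ for the double divergence of $\IIo$, and the Fialkow--Gau\ss\ relation to absorb the Weyl and cubic-in-$\IIo$ terms.

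One small but real omission: you list only \emph{three} structural identities for the final collection, but a fourth is genuinely needed. Lemma~\ref{W3} supplies $\nabla_n^2\rho|_\Sigma$ in a form that still contains $\tfrac{d-3}{d-2}\nabla_n G(n,n)$, and the term $\nablab^a\bigl[(\hat n^b\Rho_{ba})^\top\bigr]$ that you say "combines with $-\bar\Delta H$" does not appear in Lemma~\ref{W3} directly --- it is produced only after processing $\nabla_n G(n,n)$ via Lemma~\ref{Einstein}, which also contributes the $\IIo^{ab}\Ric_{ab}$ and $H\Ric(n,n)$ pieces that make the $\nabla_n\J$, $H\J$, $H\Rho(n,n)$ cancellations you promise actually happen. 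The paper invokes Lemma~\ref{Einstein} explicitly for this reason. Once that step is inserted, the arithmetic closes as you describe: the two double-divergences sum to the $-1/(d-3)$ prefactor, the Gau\ss--Fialkow identity converts $-2\IIo^{ab}\IIo_{ac}\IIo^c_b+2\IIo^{ab}W_{cabd}n^cn^d$ into $-2(d-3)L$, and the residual $\bar\Rho$ and $L$ coefficients land on $-(d-2)/(d-3)$ and $-(3d-8)/(d-3)$ respectively.
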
 

\begin{proof}
Firstly, recall that in a choice of scale $g$, the tractor connection acts on a standard tractor $V^A$ according to (see for example~\cite{BEG})
\begin{equation}\label{trconn}
\nd^{\ct}_a
\left( \begin{array}{c}
v^+\\[1mm]v_b\, \\[1mm] v^-
\end{array} \right) \stackrel g=
\left( \begin{array}{c}
    \nabla_a v^+-v_a \\[1mm]
    \nabla_a v_b+ g_{ab} v^- +\Rho_{ab}v^+ \\[1mm]
    \nabla_a v^- - \Rho_{ac}v^c  \end{array} \right) .
\end{equation}
Applying the above equation to the scale tractor twice and then contracting with the inverse metric yields
$$
[g^{ab}\nabla_a\nabla_b I^A_{\sigma}]\stackrel\Sigma=
\begin{pmatrix}
0\\[1mm]
\Delta n_a +2\nabla_a \rho \\[1mm]
(\Delta-\J)\rho -2\Rho^{ab}\nabla_a n_b-\nabla_n \J
\end{pmatrix}\, .
$$
Along~$\Sigma$ we have
\begin{eqnarray*}
\Delta n_a+2\nabla_a\rho&=&
\nabla^b \nabla_a n_b+2\nabla_a\rho\\
&=&\nabla_a \nabla.n + \Ric_{ab}n^b+2\nabla_a\rho\\
&=&-(d-2)\big(\nabla_a \rho - \Rho_{ab}n^b\big)\\
&=&-(d-2)\big(\nabla_a^\top \rho + n_a \nabla_n \rho -\Rho_{ab}n^b\big)\\
&=&-n_a\IIo_{bc}\IIo^{bc}+(d-2)\big(\nablab_a H + (\Rho_{ab}\hat n^b)^\top\big)\, .
\end{eqnarray*}
The last line was obtained using Equations~\nn{rhoH} and~\nn{nablanrho}.
The traced Codazzi-Mainardi equation~\nn{Mainarditrace} 
establishes the middle slot of the right hand side of the displayed result. Note that this result could also be obtained from Corollary~\ref{gradI} and symmetry of~$P^{AB}$.

Also, computing along~$\Sigma$ (using Lemma~\ref{Laplaces} to handle the ambient Laplace operator and Equation~\nn{IIc} for the gradient of the normal vector)
\begin{equation*}
\begin{split}
(\Delta-\J)\rho& -2\Rho^{ab}\nabla_a n_b-\nabla_n \J\\
&=-\bar \Delta H +\nabla_n^2\rho +(d-2) H \nabla_n\rho-2\Rho^{ab}\IIo_{ab}-(\nabla_n+H)\J\, .
\end{split}
\end{equation*}
Normal derivatives of~$\rho$ are given  by Lemma~\ref{lineWillmore} and Proposition~\ref{W3}. Furthermore,
a simple consequence of the Codazzi--Mainardi Equation~\nn{Mainardi} is the following identity
\begin{equation}\label{boxH}
\begin{split}
\bar\Delta H&=
\frac1{d-1}\nablab^a\big(\nablab.\II_a-(\Ric_{ab}\hat n^b)^\top\big) =\frac{1}{d-2}\nablab^a\big(\nablab.\IIo_a-(d-2)(\Rho_{ab}\hat n^b)^\top\big)\, ,
\end{split}
\end{equation}
which allows
 the Laplacian of the mean curvature to be traded for 
divergences of the trace-free second fundamental form. In addition, normal derivatives of the normal components of the Einstein tensor are given by Lemma~\ref{Einstein} and the ambient Schouten tensor can be eliminated using the Fialkow--Gau\ss\ equation~\nn{Gausstrace}. Orchestrating those maneuvers gives the bottom slot of
the displayed result and completes the proof.
\end{proof}

The above lemma combined with Corollary~\ref{gradI} determine $\hD^A  I^B$ along~$\Sigma$. This, together with
 Corollary~\ref{Kdot}, gives the following  holographic formula for the tractor second fundamental form (up to a slight modification): 
\begin{proposition}\label{HOLII}
Let $\sigma$ be a conformal unit defining density and~$d\geq4$. Then
\begin{equation}\label{holII}
\left.\left[\hD^A  I^B-\frac{2}{d-2}  I^{(A}X^{B)} K_{\rm ext}+\frac{X^AX^B\,  I\cdot \hD K_{\rm ext}}{(d-2)(d-3)}\right]\right|_\Sigma=
L^{AB}\, + \, \frac{X^A X^B\, L}{d-3}\, .
\end{equation}
\end{proposition}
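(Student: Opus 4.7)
The plan is to verify the identity slot-by-slot in a chosen scale $g\in\cc$, using the formulae~\nn{Dform} and~\nn{trconn} for $\hD^A$ and the tractor connection, together with Corollary~\ref{gradI} (gradient of the scale tractor), the preceding Lemma (giving $[\Delta I^A]|_\Sigma$), and Corollary~\ref{Kdot} (giving $I\cdot\hD K_{\rm ext}|_\Sigma = -2(d-3)L$). The top slot is automatic: $P^{AB}:=\hD^A I^B$ satisfies $X_A P^{AB}=0$ by~\nn{XPzero}, and both correction terms have vanishing $+$-slot along $\Sigma$ (since $\sigma|_\Sigma=0$ and $X^A X_A=0$), while the RHS manifestly has vanishing $+$-slot by construction of $q$.

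For the $(a,b)$-slot, Proposition~\ref{firsthol} gives $q^*_\Sigma(P^{AB})=\IIo_{ab}$; the correction terms contribute nothing since $X^A$ has no middle component, and this matches the middle slot of $L^{AB}=q(\IIo_{ab})$. For the mixed $(a,-)$-slot, applying~\nn{trconn} to $I^B$ together with~\nn{rhoH}, \nn{nablanrho}, and the traced Codazzi--Mainardi identity~\nn{Mainarditrace} yields along $\Sigma$
$$(\nabla_a\rho-\Rho_{ac}n^c)\big|_\Sigma=-\frac{\nablab.\IIo_a}{d-2}+\frac{\hat n_a K}{d-2}.$$
The superfluous $\hat n_a K/(d-2)$ is cancelled precisely by the $(a,-)$ contribution of $-\tfrac{2}{d-2}\,I^{(A}X^{B)} K_{\rm ext}$ (whose middle--bottom block along $\Sigma$ reads $\hat n_a/2$, multiplying $-2K_{\rm ext}/(d-2)$), leaving exactly the mixed slot of $L^{AB}$; the $X^AX^B$ correction has no $(a,-)$ piece.

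The main obstacle is the $(-,-)$-slot. Using $[\hD^A]_g$ in its $A=-$ component, the $(-,-)$-slot of $P^{AB}$ equals $-(d-2)^{-1}$ times the $B=-$ slot of $\Delta I^B$. Substituting the preceding Lemma yields
$$\frac{\nablab.\nablab.\IIo + (d-2)\,\IIo^{ab}\bar\Rho_{ab}}{(d-2)(d-3)} - \frac{2HK}{d-2} + \frac{(3d-8)L}{(d-2)(d-3)}.$$
The $I^{(A}X^{B)}$ correction contributes $+2HK/(d-2)$ to this slot (since $I^-|_\Sigma=-H$ and $X^-=1$), cancelling the $HK$ term; the $X^A X^B$ correction contributes $-2L/(d-2)$ via Corollary~\ref{Kdot}. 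The numerical identity $(3d-8)L-2(d-3)L=(d-2)L$ then reassembles the remaining terms into the $(-,-)$-slot of $L^{AB}$ plus an additional $L/(d-3)$, matching $X^A X^B L/(d-3)$ on the RHS. The delicate step is this final consolidation, whose consistency hinges on the specific $L$-coefficients produced by the preceding Lemma (themselves traceable to Lemma~\ref{W3}); everything else reduces to assembling previously established identities.
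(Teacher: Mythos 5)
Your slot-by-slot computation is correct and is exactly what the paper's one-line proof ("The above lemma combined with Corollary~\ref{gradI} determine $\hD^A I^B$ along $\Sigma$; this, together with Corollary~\ref{Kdot}, gives...") leaves implicit: you apply~\nn{Dform} with $w=0$ to reduce $\hD^A I^B$ to $\nabla_a I^B$ and $-\tfrac{1}{d-2}\Delta I^B$, substitute Corollary~\ref{gradI} and the preceding Lemma, and observe that the two correction terms produce exactly $-n_a K/(d-2)$, $+2HK/(d-2)$ and $-2L/(d-2)$ in the appropriate slots, with the arithmetic $(3d-8)-2(d-3)=d-2$ closing the $(-,-)$-slot. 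The only point you could make more explicit is that both sides lie in $\ker_\Sigma(X_\llcorner)\cap\ker_\Sigma(N_\llcorner)$, so the isomorphism~\nn{Tisomorphism} between ambient and boundary tractor splittings reduces to the identity and the $g$-scale comparison with $L^{AB}$ (defined in the $\bar g$-splitting) is legitimate — but this is noted parenthetically in the paper after~\nn{Tisomorphism} and is an omission of emphasis, not of substance.
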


\begin{remark}
The first term on the left hand side of~\nn{holII} is~$P^{AB}$ as promised in Section~\ref{invts}. 
It follows from Equation~\nn{I.P} that, along~$\Sigma$,  the first two terms are the orthogonal projection of $P^{AB}$
to hypersurface tractors (meaning sections of the tractor subbundle consisting of tractors orthogonal to the normal tractor).
The failure of this 
 to be a holographic formula for the tractor second fundamental form is measured by $ I\cdot  \hD K_{\rm ext}\big|_\Sigma=-2(d-3) L$, which equals the contraction of the Fialkow tensor and the trace-free second fundamental form, see Corollary~\ref{Kdot}.
\end{remark}

\subsection{Thomas D-operator}

Here, given a defining density for a hypersurface~$\Sigma$, we construct a general family of  tangential operators (this notion was introduced in~\cite{GW} to describe ambient operators that descend to hypersurface operators upon restriction; see Definition~\ref{tangential} below) that relate the ambient and intrinsic Thomas D-operators along~$\Sigma$. 
The following Proposition was proved in~\cite{GLW} for the special case of the AE setting:

\begin{proposition}\label{DT}
Let~$\sigma$ be a defining density for a hypersurface~$\Sigma$
and denote
$
\hat I^A:=I_\sigma^A/\sqrt{I^2_\sigma}
$.
Then, if~$w+\frac d2\neq 1,\frac32,2$, the operator
\begin{equation}\label{DTanysigma}
\hD^T_A:=\hD_A-\hat I_A \hat I\cdot\hD +\frac{I^2}{h(h-1)(h-2)}\, X_A\big(\frac{1}{I^2} I\cdot D\big)^2\, ,\quad h+2:=d+2w\, ,
\end{equation}
mapping~$\Gamma(\ct^\Phi M[w])\to \Gamma(\ct_AM\otimes \ct^{\Phi}  M[w-1])$, is tangential.
\end{proposition}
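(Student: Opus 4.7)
The statement to prove is that $\hD^T_A$, as a differential operator on $\Gamma(\ct^\Phi M[w])$, is \emph{tangential}: for any tractor-valued section $V$ with $V|_\Sigma = 0$, we must have $\hD^T_A V \big|_\Sigma = 0$. Equivalently, writing any such $V$ as $V = \sigma F$ with $F \in \Gamma(\ct^\Phi M[w-1])$, we must verify that $\hD^T_A(\sigma F) \in \sigma \cdot \Gamma(\ct_A M \otimes \ct^\Phi M[w-2])$. My strategy is to derive explicit ``Leibniz-type'' identities for each of the three constituent operators, and then exhibit the cancellations built into the definition~\nn{DTanysigma}.

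\textbf{Step 1.} First, in a choice of scale $g \in \cc$, I would compute $\hD_A(\sigma F)$ using the explicit three-slot formula. The top slot contributes $w\sigma F$, the middle slot $n_a F + \sigma \nabla_a^\ct F$, and the bottom slot produces, via the Laplacian $\Delta(\sigma F)$, terms of the shape $-\tfrac{1}{h}(2 n^b \nabla_b F + F\Delta\sigma) + \sigma \cdot (\text{stuff})$. Reassembling, one obtains a schematic Leibniz identity
\begin{equation*}
\hD_A(\sigma F) \;=\; I_A F \;+\; X_A \cdot (\text{first-order correction on $F$}) \;+\; \sigma \cdot (\text{smooth}),
\end{equation*}
where the $I_A F$ term is the obstruction to simple tangentiality along the normal direction, and the $X_A$ correction is the secondary obstruction in the ``vertical'' direction.

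\textbf{Step 2.} Next I would analyse $I\cdot\hD$ and $I\cdot D = h\, I\cdot \hD$, exploiting the foundational identity (cf.\ \cite{GW, GW15}) that, when acting on weight $w$ sections, $I\cdot D$ satisfies a Leibniz rule of the form $I\cdot D(\sigma F) = h' I^2 F + \sigma \cdot I\cdot D F$ with $h'$ a weight-dependent constant. Contracting with $\hat I_A = I_A/\sqrt{I^2}$ and subtracting, the term $\hat I_A \hat I \cdot \hD(\sigma F)$ produces exactly the $I_A F$ contribution of Step 1 (plus the expected $\sigma$-valued tail), showing that $\hD_A - \hat I_A \hat I \cdot \hD$ kills the normal direction obstruction. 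What remains after this subtraction is a residual $X_A$ term involving a single Laplace--Robin derivative of $F$.

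\textbf{Step 3.} The role of the third term $\tfrac{I^2}{h(h-1)(h-2)}\, X_A\,(\tfrac{1}{I^2} I\cdot D)^2$ is to annihilate this residual $X_A$ contribution. I would compute $(\tfrac{1}{I^2} I\cdot D)^2(\sigma F)$ by applying the Laplace--Robin Leibniz identity of Step 2 twice; the two successive applications act on weights $w$ and $w-1$ respectively, and the commutator of $1/I^2$ with $I\cdot D$ produces a further weight shift. These three weight factors are precisely the numbers $h$, $h-1$, $h-2$, which the prefactor in~\nn{DTanysigma} normalises. After this cancellation, every term in $\hD^T_A(\sigma F)$ carries an explicit factor of $\sigma$. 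The excluded weights $w+\tfrac{d}{2} \in \{1, \tfrac{3}{2}, 2\}$ are exactly those where one of $h$, $h-1$, $h-2$ vanishes, confirming the stated hypothesis.

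\textbf{Main obstacle.} The most delicate bookkeeping is in Step 3: one must carefully track the difference between $I\cdot D$ and $I\cdot \hD$ (off by the weight factor $h$, which itself changes under each application), together with the commutator $[I\cdot D, 1/I^2]$. An additional subtlety is that $1/I^2$ is only defined in a neighbourhood of $\Sigma$, so the tangentiality statement is inherently local in character, but this is harmless since tangentiality only concerns behaviour at $\Sigma$. A useful check on the algebra is the Euler identity $X\cdot \hD = w$ (acting on weight $w$ sections), which constrains how the weight factors must combine. The special case where $\sigma$ is the conformal unit defining density with $I^2_\sigma = 1 + \mathcal{O}(\sigma^d)$ and $I$ is parallel (the AE setting of~\cite{GLW}) provides a clean test of the computation and recovers the earlier result.
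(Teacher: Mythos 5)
Your proposal is correct and follows essentially the same route as the paper, which reduces tangentiality to the operator identity $\hD^T_A\circ\sigma\stackrel{\Sigma}{=}0$ and proves it from exactly the two algebraic facts you isolate: the commutator $[\hD_A,\sigma]=I_A-\tfrac{2}{h(h-2)}X_A\,I\cdot D$ (your Step~1), and the $\mathfrak{sl}(2)$ commutator $[\tfrac{1}{I^2}I\cdot D,\sigma]=d+2\mathbf{w}$ (your Steps~2--3). One small correction to Step~3 so the bookkeeping closes: the factor $2(h-1)$ that the prefactor $\tfrac{1}{h(h-1)(h-2)}$ must absorb arises not from ``three weight factors $h$, $h-1$, $h-2$'' nor from a commutator of $1/I^2$ with $I\cdot D$, but as the sum $h+(h-2)$ coming from the two successive applications of $\tfrac{1}{I^2}I\cdot D$ (acting at weights $w-1$ and then $w-2$); the leftover $\tfrac{2}{h(h-2)}X_A\,I\cdot D F$ then cancels the residue of Step~2 on the nose.
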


\begin{proof}
The proof of this result only requires that we establish the operator relation
$$
\hD^{\rm T}_A\,\circ  \sigma \stackrel\Sigma = 0\, .
$$
This follows from two facts:  (i) The $\frak{sl}(2)$ algebra (see~\cite{GW})
\begin{equation}\label{sl2}
[d+2\w,\sigma]=2\sigma\, ,\quad
\Big[\frac1{I^2} I\cdot D,\sigma\Big]=d+2\w\, ,\quad
\Big[d+2\w, \frac1{I^2} I\cdot D\Big]=-\frac2{I^2} I\cdot D\, ,
\end{equation}
 spanned by~$\sigma$ (viewed as a multiplicative operator on sections), $d+2\w$ where $\w$ is the linear operator that returns the weight of a tractor, and~$\frac1{I^2} I\cdot D$. (ii) The commutator of~$\hD^A$ and~$\sigma$ (again viewed as a multiplicative operator) 
$$
[\hD^A,\sigma]=I^A-\frac{2}{h(h-2)}X^A I\cdot D\, ,\quad h:=d+2w\, ,
$$
valid acting on tractors of weight~$w\neq -\frac d2,1-\frac d2$ which is easily verified by direct computation in a choice of scale.
\end{proof}

\begin{remark}\label{DTYam}
In fact we will also need a replacement of the tangential Thomas D-operator 
at the missing weight $w=1-\tfrac d2$.
Given a weight $w'$ tractor $V^A
\in \Gamma(\ct^AM\otimes \ct^{\Phi'} M[w'])$ subject to
$
X_A V^A=0\, ,
$
and
$$
N_A V^A\stackrel\Sigma=0\, ,
$$
we can construct a tangential analog of the operator $V^A \hD_A^T$   at the {\it Yamabe weight} $w=1-\frac d2$ as follows:  First, calling $[V^A]_g=(0,v_a,v)$, it is easy to check that the operator, given by
$$
V^A \hD_A^T \stackrel g{:=} v^a\nabla_a + \big[1-\frac d2\big] \, v\, ,
$$ 
for some $g\in \cc$ 
 defines a mapping $\Gamma(\ct^\Phi M[1-\frac d2])\to \Gamma(\ct^{\Phi'}M[w']\otimes \ct^{\Phi}  M[-\frac d2])$.
 However, for {\it any} defining density $\sigma$, we have $I_\sigma \cdot V=\sigma u$ for some smooth, weight $w'-1$ density~$u$. Thus, for some $g\in \cc$ we have $n^av_a+ \sigma v=\sigma u$ and hence $V\cdot \hD^T\stackrel g=v^a\nabla^\top_a+v\, \big[1-\frac d2\big]$
$ +\ {\mathcal O}(\sigma)\nabla_{\hat n}$, which is clearly tangential. 
 \end{remark}

\medskip

Proposition~\ref{DT} suggests that when expressed in terms of a scale,  the tangential Thomas~D-operator~
$$D^T_A:=\left\{
\begin{array}{ll}
(d+2w-2)\hD_A^{\rm T}\, , & w\neq 1-\frac d2,\frac32-\frac d2, 2-\frac d2\, ,\\[3mm]
D_A-\hat I_A \hat I\cdot D +  X_A I\cdot D \circ \frac{1}{2I^2} \circ I\cdot D\, ,&
w=1-\frac d2\, ,
\end{array}\right.
$$ depends on the tractor-coupled connection
only through the tangential combination $\nabla^\top_a:=\nabla_{a}-\hat n_a \nabla_{\hat n}$.
For the case where the defining density is  conformal unit,
it follows immediately that the operator $\hD_A^T$ is independent of any choices.
For that case we
call the operator $D^T_A$ the {\it tangential Thomas D-operator}. We will  verify that this operator indeed factors through $\nabla_a^\top$ in the sense mentioned, see  Equation~\nn{DTA} of the following Lemma.  
That Lemma also  collects 
a number of  critical  results and details important for later 
developments. Let us point out some  interesting features: 
 In Equation~\nn{ID2}, the general formula for $(I\cdot D)^2$ along~$\Sigma$ is given; for {\em boundary Yamabe weight}~$w=\frac32-\frac d2$, all normal derivatives drop out, implying that this operator then becomes tangential. This is the first example of the extrinsic conformal Laplacians discussed in Section~\ref{invops} and Remark~\ref{Yamrem}.  The Lemma's next equation specialises Equation~\nn{DTanysigma} to conformal unit defining densities. The formula for this
 in a choice of scale, given in Equation~\nn{DTA}, should be compared with the general result for the Thomas D-operator in Equation~\nn{Dform}, keeping in mind that the  orthogonal subbundle~$N^\perp$ of $\cT M|_\Sigma$ and the intrinsic hypersurface  tractor bundle~$\cT \Sigma$ are isomorphic
 (see~\cite{BrGoCNV,Grant} as well as~\cite[Section 3.2]{GW15} for details). This shows that, along~$\Sigma$, the tangential Thomas D-operator yields an extrinsic hypersurface Thomas D-operator with ambient tractor-coupled connection save for a modification by   the operator~$\frac{wKX^A}{2(d-2)(d+2w-3)}$. 

\begin{lemma}\label{TANGD}
Let~$\sigma$ be a conformal unit defining density and $d\geq 3$. 
Then, acting on weight~$w$ tractors, the following operator identity holds along~$\Sigma$, in a choice of scale~$g$,
\begin{equation}\label{ID2}
\begin{split}
(I\cdot D)^2 \stackrel\Sigma=&-(d+2w-4)\!\left\{\Delta^{\!\!\top}
+w\, \Big[\bar \J-\frac12\frac{\IIo_{ab}\IIo^{ab}}{d-2}\Big]\right.\\&
\left.-\, (d+2w-3)\ \Big[
\nabla_n^2-w\Big(2H\nabla_n -\Rho(n,n)-\frac{\IIo_{ab}\IIo^{ab}}{d-2}-\frac{(2w-1)H^2}2 
\Big)\Big]
\right\}\, ,
\end{split}
\end{equation}
where $\Delta^{\!\top}:=\bar g^{ab}\nabla_a^\top\nabla_b^\top$.
Moreover, specializing to a conformal unit defining density,
the operator~$\hD_A^{\rm T}$, as defined in Proposition~\ref{DT} , is given by
\begin{equation}\label{DTAdef}
\hD^{\rm T}_A=
\hD_A- I{}_A  I\cdot\hD +\frac{1}{h(h-1)(h-2)}\, X_A\big( I\cdot D\big)^2\, ,\quad h+2:=d+2w\, .
\end{equation}
It is determined up to terms of order $\O(\sigma^{d-1})$ times a smooth differential operator, and is subject to the same weight restrictions as in Proposition~\ref{DT}. In a choice of scale~$g$,
\begin{equation}\label{DTA}
\big[\hD^{{}^{\rm T}}{\!}^A\big]_g
\stackrel\Sigma=
\begin{pmatrix}
1&0&0\ \\[1mm]
\, n_a H&\delta_a^b&0\ \\[1mm]
-\frac{H^2}{2}&\!-n^b H\, &1\ 
\end{pmatrix}
\left[
\begin{pmatrix}
w\\[1mm]
\nabla^\top_b\\[1mm]
-\frac{\Delta^{\!\top} +w\bar J}{d+2w-3}
\end{pmatrix}+\begin{pmatrix}0\\[1mm]0\\[1mm]\frac{w\IIo_{ab}\IIo^{ab}}{2(d-2)(d+2w-3)}
\end{pmatrix}\right]\, .
\end{equation}
\end{lemma}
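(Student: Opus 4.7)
The plan is to establish the three displayed assertions in turn, all by direct computation in a chosen scale $g\in\cc$.

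For the key identity~\nn{ID2}, first I compute $I\cdot D V$ on an arbitrary weight-$w$ tractor $V$ using the scale-formula~\nn{Dform} for $D^A$ and the slot-form $I^A\stackrel g=(\sigma,n_a,\rho)$:
$$
I\cdot D V \;=\; -\sigma(\Delta+w\J)V + (d+2w-2)\bigl[n^a\nabla_a V + w\rho V\bigr].
$$
Since $I\cdot D V$ has weight $w-1$, I apply $I\cdot D$ a second time (with $w\to w-1$) and restrict to $\Sigma$. Every explicit $\sigma$-factor vanishes, but where $\nabla_n$ hits such a factor it contributes $n^2|_\Sigma=1$. I then substitute the hypersurface data: Lemma~\ref{HII} supplies $\rho|_\Sigma=-H$ and $\nabla_n n_b|_\Sigma=Hn_b$; Lemma~\ref{lineWillmore} supplies $\nabla_n\rho|_\Sigma=\tfrac{K}{d-2}+\Rho(n,n)$; and the decomposition $\Delta\stackrel\Sigma=\Delta^{\!\top}+\nabla_n^2+(d-2)H\nabla_n$ (obtained by inserting $g^{ab}=\gamma^{ab}+\hat n^a\hat n^b$ and using $\nabla\cdot\hat n|_\Sigma=(d-1)H$) converts the ambient Laplacian into its tangential counterpart. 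Carefully tracking the coefficients $(d+2w-2)$, $(d+2w-3)$, $(d+2w-4)$ coming from the recursive application of $I\cdot D$ yields~\nn{ID2}.

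For~\nn{DTAdef}, Theorem~\ref{BIGTHE} gives $I^2 = 1+\sigma^d B$ for a conformal unit defining density, so $\hat I^A = I^A$ and $I^2/[h(h-1)(h-2)] = 1/[h(h-1)(h-2)]$ up to $O(\sigma^d)$. Substituting these into the general tangential operator of Proposition~\ref{DT} produces~\nn{DTAdef}. The uniqueness modulo $O(\sigma^{d-1})$ times a smooth operator then follows by tracking the maximal order of derivatives of $\sigma$ appearing in $\hD^T_A$ and comparing with the $O(\sigma^{d+1})$ uniqueness of $\sigma$.

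For the explicit scale formula~\nn{DTA}, I compute each slot of $\hD^T_A V$ using~\nn{DTAdef}. The top slot is simply $w$ since the $I_A$- and $X_A$-terms vanish there. For the middle slot, along $\Sigma$ we have $I\cdot\hD V = \nabla_n V - wHV$, and subtracting $n_a(\nabla_n V-wHV)$ from $\nabla_a V$ gives $\nabla^\top_a V + wHn_aV$, matching the middle component of the stated matrix-column product. For the bottom slot I assemble $-(\Delta+w\J)V/(d+2w-2)$ from $\hD^A V$, $H(\nabla_n-wH)V$ from $-\rho(I\cdot\hD V)|_\Sigma$, and $(I\cdot D)^2V/[h(h-1)(h-2)]$ (via~\nn{ID2}). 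The $\nabla_n^2$ and $H\nabla_n$ contributions cancel identically, and the ambient $\J$ is converted to $\bar\J$ using the traced Fialkow--Gauss relation extracted from~\nn{Gausstrace}; this conversion also produces the $wK/[2(d-2)(d+2w-3)]$ correction and the $-wH^2/2$ contribution.

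The main obstacle is the final bottom-slot computation, where the $\nabla_n^2$, $H\nabla_n$, $\J$, $\bar\J$, $\Rho(n,n)$, $K$ and $H^2$ contributions must combine with weight-dependent coefficients to reproduce precisely the right-hand side of~\nn{DTA}. The Fialkow--Gauss trace identity is the one non-obvious ingredient ensuring this collapse; everything else is bookkeeping of partial fractions in $(d+2w-k)$.
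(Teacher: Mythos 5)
Your proposal mirrors the paper's own proof essentially step for step: expand $(I\cdot D)^2$ directly in a scale using $I\cdot D \stackrel g= (d+2w-2)(\nabla_n + w\rho) - \sigma(\Delta + w\J)$, substitute the hypersurface data from Lemmas~\ref{HII},~\ref{lineWillmore} and the Laplacian decomposition~\nn{opid}, then read off~\nn{DTAdef} from~\nn{one} and assemble~\nn{DTA} slot by slot. The one small mismatch is your choice of tool for trading ambient $\J$ for intrinsic $\bar\J$ (a step that is already needed to land on the $\bar\J$ in~\nn{ID2}, not just in the bottom slot of~\nn{DTA}): you invoke the traced Fialkow--Gau\ss\ relation~\nn{Gausstrace}, but that identity is only formulated for $d\geq 4$ and its trace trivialises to $0=0$ at $d=3$, whereas the Lemma is asserted for $d\geq 3$; the paper instead uses the Ricci relation~\nn{JJbar}, which yields the same scalar identity $\J \stackrel\Sigma= \bar\J + \Rho(n,n) + \tfrac{K}{2(d-2)} - \tfrac{(d-1)H^2}{2}$ but is valid uniformly in $d\geq 3$, and you should cite that instead.
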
 
\begin{proof} 
For the first statement, 
we first use 
that
$$
I\cdot D\stackrel g=(d+2w-2)(\nabla_n+w\rho)
-\sigma(\Delta+w\J)
$$
and
\begin{equation}\label{RobinI}
 I\cdot\hD\stackrel\Sigma=\nabla_n-wH
\end{equation}
to compute the operator statement (acting on weight~$w$ objects)
 along~$\Sigma$ directly
\begin{equation*}
\begin{split}
& I\cdot D^2\stackrel\Sigma=
(d+2w-4)\big[\nabla_n-(w-1)H\big]\big[(d+2w-2)(\nabla_n+w\rho)-\si (\Delta+w\J)\big]\\
&=\!-(d+2w-4)\Big[\Delta\!+w\J\!-(d+2w-2)\big(\nabla_n^2\!-\!(2w-1)H\nabla_n\!+w(\nabla_n\rho)\!+\!w(w-1)H^2\big)\Big]\, .
\end{split}
\end{equation*}
On the second line we used  the operator product identity~$\nabla_n \circ  \si =1$ valid along~$\Sigma$ and~$\rho|_\Sigma=-H$ as per Lemma~\ref{HII}.
To obtain the quoted result we used Equation~\ref{JJbar}, Lemma~\ref{lineWillmore} as well as the operator identity for the tractor-coupled Laplacian \begin{equation}\label{opid}\Delta\stackrel\Sigma=\Delta^{\!\top}+\nabla_n^2+(d-2)H\nabla_n\, ,\end{equation}
which can easily be established along the same lines used to prove Lemma~\ref{Laplaces}.

The second statement  follows from the defining property of a conformal unit defining density in   Equation~\nn{one}.  
For the third 
we first use Equation~\nn{Dform} as well as
Equation~\nn{RobinI}
to find the operator statement for the first two terms of Equation~\nn{DTAdef},
$$
\big[\hD^A -  I^A I\cdot \hD\big]_g\stackrel\Sigma=
\begin{pmatrix}
w\\[1mm]
\nabla_a-n_a (\nabla_n-wH)\\[1.5mm]
-\frac{1}{d+2w-2}\big(\Delta+\J w\big)+H(\nabla_n-wH)
\end{pmatrix}\, .
$$
Remembering that $\nabla_a^\top\stackrel\Sigma= \nabla_a- n_a \nabla_n$, it is easy to verify that the top two slots on the right hand side of the above display agree with those quoted in Equation~\nn{DTA}. Thus it only remains to verify the bottom slot of Equation~\nn{DTA}. Using the computation of $ I.D^2$ along~$\Sigma$ shown above, as well as the bottom slot in the above display, 
one can 
employ Equation~\nn{JJbar}
to trade the ambient~$\J$ for its intrinsic counterpart~$\bar\J$, Equation~\nn{opid} to exchange $\Delta$ for $\Delta^{\!\top}$ and Equation~\nn{nablanrho}
to handle $\nabla_n\rho$. This yields the result quoted in the bottom slot of Equation~\nn{DTAdef}. 
\end{proof}

The Thomas D-operator identity
\begin{equation}\label{DXT}
D_A\circ X^A = (d+\w)(d+2\w+2)
\end{equation}
is useful in many contexts; the tangential 
 Thomas D-operator obeys an analog of this:
  \begin{corollary}
Let $T\in \Gamma(\ct^\Phi M[w])$ where $w+\frac d2\neq1,\frac32,2$. Then
\begin{equation}\label{bDXT}
\hD^T_A\big(X^A T)\stackrel\Sigma= \frac{(d+2w+1)(d+w-1)\,  T}{d+2w-1}\, .
\end{equation}
\end{corollary}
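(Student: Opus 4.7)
The plan is to reduce the evaluation of $\hD^T_A(X^A T)$ along $\Sigma$ to an intrinsic Thomas-$D$ computation on $(\Sigma,\bar\cc)$, where the analog of identity~\nn{DXT} will deliver the claimed constant. The main input is the explicit form of $\hD^{T\,A}$ given in Equation~\nn{DTA}: the $3\times 3$ matrix appearing there is lower triangular, so the extra column $\big(0,0,\,wK/[2(d-2)(d+2w-3)]\big)$ contributes strictly to the bottom slot of $\hD^{T\,A}$, i.e.\ purely in the $X^A$-direction (with $K=\IIo_{ab}\IIo^{ab}$). Lowering the free tractor index then gives, for any intrinsic (i.e.\ $N^\perp$-valued) tractor $\bar T$ of weight $w$,
\begin{equation*}
\hD^T_A \bar T\stackrel\Sigma= \bar\hD_A \bar T+ \frac{wK}{2(d-2)(d+2w-3)}\, X_A\, \bar T,
\end{equation*}
where $\bar\hD_A$ denotes the Thomas-$D$ operator of the intrinsic $(d-1)$-dimensional conformal manifold $(\Sigma,\bar\cc)$. (This is precisely the content of the remark just after Equation~\nn{DTA}.)

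I next apply this formula, with the weight label $w$ replaced by $w+1$, to the weight-$(w+1)$ intrinsic tractor $\bar X^A T=X^A T|_\Sigma$, and contract the free tractor index on $\hD^T$ with the index $A$ carried by $X^A$. The main term yields $\bar\hD_A(\bar X^A T)$. The correction term becomes proportional to $X_A X^A$, which vanishes by the standard null property $X^2=0$ of the canonical tractor (immediate from $[X^A]_g=(0,0,1)$ and the form~\nn{trmet} of the tractor metric). Thus
\begin{equation*}
\hD^T_A(X^A T)\stackrel\Sigma=\bar\hD_A(\bar X^A T),
\end{equation*}
a quantity purely intrinsic to $\Sigma$.

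Finally, I invoke the intrinsic analog of identity~\nn{DXT} on $\Sigma$, namely $\bar D_A\bar X^A=(d-1+\w)(d+1+2\w)$. On the weight-$w$ density $T$ this gives $\bar D_A(\bar X^A T)=(d+w-1)(d+2w+1)\,T$. Passing from $\bar D$ to $\bar\hD$ at weight $w+1$ introduces the factor $(d-1)+2(w+1)-2=d+2w-1$ in the denominator, producing exactly the asserted value $(d+w-1)(d+2w+1)/(d+2w-1)$. The only genuinely delicate step is the structural claim in the first paragraph: one must carefully track through Equation~\nn{DTA} to confirm that the modification really sits along $X^A$ (rather than leaking into other slot combinations), which is what makes the $X^2=0$ collapse work; once this is granted, the rest is a direct application of the intrinsic $\bar D\bar X$ identity.
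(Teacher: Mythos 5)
Your final formula is correct, but the argument has a genuine gap in its first paragraph. The formula you assert,
\begin{equation*}
\hD^T_A\,\bar T\stackrel\Sigma= \bar\hD_A\,\bar T+\frac{wK}{2(d-2)(d+2w-3)}\,X_A\,\bar T ,
\end{equation*}
is \emph{not} what the remark after Equation~\nn{DTA} says.  That remark states that $\hD^T_A$ coincides (up to the $U$-matrix conjugation and the $X^A K$ modification) with a Thomas-$D$-type formula built from the \emph{ambient} tangential tractor connection $\nabla^\top$.  The intrinsic operator $\bar\hD_A$ is built instead from the \emph{intrinsic} tractor connection $\bar\nabla$ of $(\Sigma,\cc_\Sigma)$, and by the Fialkow--Gau\ss\ formula (Proposition~\ref{FGformula}) these two connections differ on $N^\perp$-valued sections by the tractor endomorphism $\F_c{}^A{}_B$ and by the second-fundamental-form term $N^A L_c{}^B(\,\cdot\,)_B$.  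Your step~1 formula omits precisely those corrections, and so is false for general $N^\perp$-valued $\bar T$; it is a misreading of the remark, not a restatement of it.

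It then happens that your conclusion $\hD^T_A(X^A T)\stackrel\Sigma=\bar\hD_A(\bar X^A T)$ is nonetheless true, because $X^A T$ sits at the bottom of the tractor filtration and the omitted corrections all strictly lower that filtration.  Concretely, since $[X^A T]_g=(0,0,T)$: first, $\F_c{}^A{}_B(\bar X^B T)=0$ and $L_c{}^B X_B=0$, so $\nabla^\top_c(X^A T)=\bar\nabla_c(\bar X^A T)$ with no correction at all; second, in $\Delta^{\!\top}(X^AT)$ the $\F$-correction lands only in the bottom $A$-slot while the $N^A L$-correction has vanishing $\bar g^{bc}$-trace (because $\IIo$ is trace-free), so the top $A$-slot --- the only slot that pairs with the bottom $B$-slot of $\hD^{T\,B}$ in the contraction --- is unaffected.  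You would need to supply this verification (or an equivalent one) to make the reduction to the intrinsic identity $\bar D_A\bar X^A=(d-1+\w)(d+2\w+1)$ legitimate.  The paper's own proof sidesteps this entirely: it simply computes $\nabla^\top_b(X^A T)$ and $\Delta^{\!\top}(X^A T)$ directly in ambient slots and substitutes into Equation~\nn{DTA}, never invoking the intrinsic $\bar D_A\bar X^A$ identity or any comparison between $\nabla^\top$ and $\bar\nabla$.
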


\begin{proof}
Noting that in a choice of scale, 
$$
\big[\nabla^\top_b \big(X^A T)\big]_g\stackrel\Sigma=
\begin{pmatrix}
0\\
\bar g_{ab} T\\[.5mm]0
\end{pmatrix}\, \mbox{ and }
\big[\Delta^\top \big(X^A T\big)\big]_g\stackrel\Sigma=
\begin{pmatrix}
(d-1)T\\\star\\\star\end{pmatrix}\, ,
$$
the result follows directly by application of Equation~\ref{DTA}.
\end{proof}

\begin{remark}\label{Yamrem}
As mentioned above (see also~\cite{Us,GLW}) at weight~$w=\frac32-\frac {d}2$, the terms in  Equation~\nn{ID2} above  involving~$\nabla_n$ are absent, and the operator~
$$ I\cdot D^2\stackrel \Sigma=
\Delta^{\!\top}+\Big(\frac32-\frac d2\Big)\Big[\bar \J-\frac12\frac{\IIo_{ab}\IIo^{ab}}{d-2}\Big]=:\square^\top_{\rm Y}\, ,$$
 is
 tangential. 
Specializing to densities,~$\Delta^{\!\top}$ becomes the intrinsic Laplace operator~$\bar \Delta$ along~$\Sigma$ and  $\square^\top_{\rm Y}$ is the intrinsic Yamabe Laplacian modified by 
the rigidity density.
\end{remark}

Our first application of the canonical tangential Thomas D-operator is to compute its action on the scale tractor. This gives another holographic formula for the tractor second fundamental form (again up to known terms) that can  be regarded as a conformal analog of the Riemannian result  for the second fundamental form in terms of the ambient Levi-Civita connection acting on a unit normal vector
 in Equation~\nn{two}.

\begin{proposition}\label{DTII}
Let $d\geq 4$. Then
$$
\hD^{\rm T}_A  
N_B^{\phantom{\rm T}}\stackrel\Sigma=L_{AB}+\frac{X_A(N_B\, K + X_B L)}{d-3}\, ,$$
where $N_B$ is any smooth extension of the normal tractor off~$\Sigma$.
\end{proposition}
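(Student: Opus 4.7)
The plan is to exploit the tangentiality of $\hD^T_A$ (Proposition~\ref{DT}) by taking the canonical smooth extension $N_B := I_\sigma^B$, which agrees with the normal tractor on $\Sigma$ by Equation~\eqref{normaltractor}. Specialising the defining formula~\eqref{DTAdef} to weight $w = 0$ (so $h = d-2$) gives, modulo $\O(\sigma^{d-1})$,
\[
\hD^T_A I_B \stackrel\Sigma= \hD_A I_B \,-\, I_A\, I \cdot \hD\, I_B \,+\, \frac{X_A\,(I\cdot D)^2 I_B}{(d-2)(d-3)(d-4)}\, .
\]
The first term is $\hD_A I_B = P_{AB}$, whose restriction is furnished directly by Proposition~\ref{HOLII}. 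For the second, the symmetry of $P_{AB} = \hD_A \hD_B \sigma$ together with Equation~\eqref{I.P} yields $I \cdot \hD\, I_B = I^A P_{AB} \stackrel\Sigma= X_B K/(d-2)$, so this term contributes $-I_A X_B K/(d-2)$.

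The main obstacle lies in evaluating the third term. Writing $I\cdot D = (d-2)\nabla_n - \sigma \Delta$ on weight-$0$ tractors and its weight-$(-1)$ version $(d-4)(\nabla_n + H) - \sigma(\Delta - \J)$, and iterating (using $n^2|_\Sigma = 1$, $\sigma|_\Sigma = 0$, and $\nabla_n I_B|_\Sigma = X_B K/(d-2)$ from Corollary~\ref{gradI}), this reduces to
\[
(I\cdot D)^2 I_B \stackrel\Sigma= (d-4)\bigl[(d-2)\nabla_n^2 I_B - \Delta I_B + HX_B K\bigr]\, .
\]
The jet $\Delta I_B|_\Sigma$ is supplied by the Lemma immediately preceding Proposition~\ref{HOLII}, while $\nabla_n^2 I_B|_\Sigma$ is extracted via the operator identity~\eqref{opid}. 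Alternatively, Equation~\eqref{ID2} of Lemma~\ref{TANGD} at $w=0$ handles the same computation via $\Delta^\top I_B$ directly. Either route identifies the third term along $\Sigma$ as $\tfrac{X_A(I_B K + X_B\, I\cdot \hD K_{\rm ext})}{(d-2)(d-3)}$.

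Assembling the three contributions, the unevaluated $X_A X_B\, I\cdot \hD K_{\rm ext}$ piece from the third term cancels exactly with its counterpart in Proposition~\ref{HOLII}, while the $X_A I_B K$ coefficients combine, via $\tfrac{1}{d-2} + \tfrac{1}{(d-2)(d-3)} = \tfrac{1}{d-3}$, to produce $\tfrac{X_A I_B K}{d-3}$. This yields the advertised identity $\hD^T_A N_B \stackrel\Sigma= L_{AB} + \tfrac{X_A(N_B K + X_B L)}{d-3}$. The boundary case $d=4$, where the factor $(d-4)$ in the denominator of the third term of~\eqref{DTAdef} vanishes, is handled either by direct evaluation using the regular in-scale formula~\eqref{DTA} or by rationality in $d$.
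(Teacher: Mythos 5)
Your argument is correct, but it is not the route the paper takes in its displayed proof of Proposition~\ref{DTII}. The paper's own proof proceeds entirely in a choice of scale: it uses Corollary~\ref{gradI} to write down $[\nabla^\top_a I^B]|_\Sigma$ as a tractor column, computes $[\Delta^{\!\top}I^B]|_\Sigma$ from it, feeds the result into the in-scale matrix formula~\eqref{DTA} of Lemma~\ref{TANGD}, and finally simplifies $\Rho_{ab}\IIo^{ab}$ via the Fialkow--Gau\ss\ identity. By contrast, you work with the scale-invariant decomposition~\eqref{DTAdef} and evaluate its three constituents abstractly. This is in fact essentially the ``alternate proof'' the paper sketches in the Remark following Proposition~\ref{DTII}: there they write $\hD^{\rm T}_A I_B = P_{AB} - I_A\,I\cdot P_B + (d-3)^{-1}X_A\,I\cdot\hD\,(I\cdot P_B)$ and feed in Equation~\eqref{I.P}, Corollary~\ref{Kdot}, Proposition~\ref{HOLII}, and the modified Leibniz rule~\eqref{failure}. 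Your handling of the first two terms matches that Remark. For the third term you deviate slightly: rather than apply the modified Leibniz rule to $I\cdot\hD(X_B K_{\rm ext})$ (which immediately yields $I_B K_{\rm ext}+X_B\,I\cdot\hD K_{\rm ext}$ along $\Sigma$, since $I\cdot\hD X_B=I_B$ and the correction term carries a factor of $X\cdot I=\sigma$), you drop back into a scale and reduce $(I\cdot D)^2 I_B$ to $\nabla_n^2 I_B$ and $\Delta I_B$ jets. That works, and I verified that your assembly of the three pieces, including the cancellation of the $X_AX_B\,I\cdot\hD K_{\rm ext}$ contribution and the coefficient sum $\tfrac{1}{d-2}+\tfrac{1}{(d-2)(d-3)}=\tfrac{1}{d-3}$, gives the stated formula. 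The trade-off: the Remark's Leibniz-rule route is shorter and avoids re-deriving jets that Proposition~\ref{HOLII} has already packaged; your route makes the $d=4$ case more visibly delicate, since $h-2=d-4$ in~\eqref{DTAdef} vanishes there, whereas the paper's in-scale formula~\eqref{DTA} is manifestly regular at $d=4,\,w=0$. You acknowledge this and your rationality/direct-evaluation fallback is acceptable, though one should be explicit that the final identity, being manifestly polynomial in $1/(d-3)$, extends by continuity once established for $d\geq5$.
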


\begin{proof}
Let $\si$ be a conformal unit defining density and $ I_A:=\hD_A\sigma$. Then, since ${\hD^T}_A$ is tangential, we may replace the left hand side of the above display by ${\hD^T}_A  I_B^{\phantom{\rm T}}$, which we shall now compute.
From Corollary~\ref{gradI} we have
$$
[\nabla^\top_a  I^B]\stackrel\Sigma=
\begin{pmatrix}
0\, \\ \IIo_{ab} \\[1mm] -\frac{\nablab.\IIo_a}{d-2}\ 
\end{pmatrix}\, .
$$
Using that~$\nabla^\top_a \IIo^{a}_{b}=\nablab.\IIo_b-n_b \II_{ac}\IIo^{ac}=\nablab.\IIo_b-n_b K$, we compute
$$
[\Delta^{\!\top}  I^B]=\begin{pmatrix}
0\\[1mm] \frac{d-3}{d-2}\nablab.\IIo_b-n_b K
\\[2mm]
-\frac{\nablab.\nablab.\IIo+(d-2)\Rho_{ab}\IIo^{ab}}{d-2}
\end{pmatrix}\, .
$$ 
We now have the main ingredients required to employ Equation~\nn{DTA} of Lemma~\ref{TANGD}, and find
$$
[\hD^{{}^{\rm T}}{\!}^A  I^B]\stackrel\Sigma=
\begin{pmatrix}0&0&0\\
0&\IIo_{ab}&
-\frac{\nablab.\IIo_a}{d-2}\\[1mm]
0&-\frac{\nablab.\IIo_b}{d-2}+\frac{n_b K}{d-3}&
\frac{\nablab.\nablab.\IIo+(d-2)\Rho_{ab}\IIo^{ab}}{(d-2)(d-3)}
\end{pmatrix}\, .
$$
The final result is obtained upon using the Fialkow--Gau\ss\ Equation~\nn{Gausstrace}
to give~$$\Rho_{ab}\IIo^{ab}=\bar\Rho_{ab}\IIo^{ab}+L-HK\, .$$
\end{proof}

\begin{remark}
Since Propositions~\ref{HOLII} and~\ref{DTII} 
both give holographic formul\ae\ for the
tractor second fundamental form, 
we can use the former to given an alternate proof of the latter, without recourse to explicit expressions in a choice of scale: One begins by 
using Equations~\nn{PAB} and~\nn{DTAdef} to give
$
\hD^{\rm T}_A I_B=P_{AB}- I_A  I\cdot P_B +(d-3)^{-1}X_A  I\cdot\hD \,   I\cdot P_B\
$ (for $d\geq 4$). Then employing Equation~\nn{I.P}
in concert with Corollary~\ref{Kdot} and Proposition~\ref{HOLII}, the result of Proposition~\ref{DTII} can easily  be obtained by applying the fundamental calculus of the Thomas D-operator expressed by the modified Leibniz rule~\cite{Joung}
\begin{equation}\label{failure}
\hD^A(T_1T_2)- (\hD^A T_1) \, T_2 - T_1(\hD^A T_2)=-\frac{2}{d+2w_1 + 2w_2 -2}\, X^A\, (\hD_B T_1)(\hD^B T_2)\, ,
\end{equation}
valid for 
$T_{1,2}\in \Gamma(\cT^\Phi M[w_{1,2}\neq-d/2])$ and $w_1+w_2\neq 1-d/2$,
and the resulting operator commutator relation (see~\cite[Section 3.6]{GW15})
\begin{equation} \label{hDsip}
[\hD^A,\sigma^k] = k\,  \sigma^{k-1}I^A 
- \frac{2k\,  X^A\sigma^{k-1}I\cdot D }{(d\!+2k\!+2w\!-2)(d+2w-2)}
-\frac{k(k-1)X^A\sigma^{k-2}I^2  }{d+2k+2w-2} \, \quad k\in {\mathbb Z}_{\geq 0},
\end{equation}
valid for {\it any} scale $\sigma$ and acting on tractors of weight $w\neq 1-d/2,1-k-d/2$.
\end{remark}

To complete the relationship between the tangential Thom\-as-$D$~operator $D^T_A$ and the intrinsic Thomas D-operator~$\bar D_A$ of the hypersurface~$\Sigma$, we need a generalization of the Gau\ss\ formula~\nn{hypgrad} relating the projected  tractor connection~$\nabla^\top$ to its intrinsic hypersurface counterpart~$\nablab$ (this result was also developed in~\cite{Grant, Stafford, YuriThesis,Calderbank}).

\begin{proposition}[Fialkow--Gau\ss\ formula]\label{FGformula}

Let~$V^A\in \Gamma(\ct M)$
be such that along~$\Sigma$ it lies in
  $ \Gamma(N^\perp)$ and denote
 by  $\Sigma^A_B:=\delta^A_B -N^A N_B$  the projector mapping~$\Gamma\big(\ct M\big|_\Sigma\big)\to \Gamma(N^\perp)$. Then, for $d\geq 4$,
$$
\Sigma^A_B \nabla_c^\top V^B =  \nabla_c^\top V^A+N^A L_c^B V_B
= \bar\nabla_c \bar V^A + \F_c{}^A{}_B \bar V^B=:\nablab_c^\F \, \bar V^A\, .$$
Here ${\mathcal F}$ is  a conformally  invariant, one-form valued, boundary tractor endomorphism  given in a boundary splitting by
$$
\big[\F_c{}^A{}_B\big]_{\bar g}=\begin{pmatrix}
0&0&0\\
\F_{ca}&0&0\\
0&-\F_c{}^b
&0
\end{pmatrix}\, .
$$
\end{proposition}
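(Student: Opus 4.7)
The proof splits naturally into the two equalities stated. The first is an ``orthogonal decomposition'' identity relating the projected and unprojected tangential gradients via the tractor second fundamental form, and the second is a Gau\ss-type formula identifying the projected ambient tangential connection with the intrinsic hypersurface tractor connection plus a Fialkow endomorphism.

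For the first equality, the starting observation is that $V|_\Sigma \in \Gamma(N^\perp)$ means $N_A V^A \stackrel\Sigma= 0$. Differentiating this identity tangentially along $\Sigma$ and applying the Leibniz rule for $\nabla^\cT$ yields
\begin{equation*}
N_A \nabla_c^\top V^A \stackrel\Sigma= -(\nabla_c^\top N_A)\, V^A .
\end{equation*}
Since $N \stackrel\Sigma = I_\sigma$ for a conformal unit defining density $\sigma$, Corollary~\ref{gradI} supplies the explicit slot expression for $\nabla_c^\top N^A|_\Sigma$. Comparing this expression with the defining holographic formula~\eqref{LAB} for $L^{AB}$, and using that the slot of $V^A$ parallel to $N^A$ vanishes, one reads off $-(\nabla_c^\top N_A)V^A = L_c{}^B V_B$. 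Substituting $\delta^A_B = \Sigma^A_B + N^A N_B$ and rearranging delivers the first equality.

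For the second equality, I fix a scale $g \in \cc$ with induced intrinsic scale $\bar g \in \cc_\Sigma$ and expand both sides slot-by-slot using the tractor connection formula~\eqref{trconn}, once on $M$ (tangentially restricted to $\Sigma$) and once on $\Sigma$ intrinsically. The constraint $V^A \in \Gamma(N^\perp)$ lets us view $V$ as an intrinsic tractor $\bar V^A$ via the isomorphism $N^\perp \cong \cT\Sigma$, with the middle ambient slot $v^a$ related to the intrinsic tangent slot $\bar v^a$ by an $H$-dependent shift. The componentwise comparison produces discrepancies from three sources: the Gau\ss\ formula~\eqref{hypgrad} contributes $\IIo$-terms when trading $\nabla_c^\top$ for $\bar\nabla_c$, the metric slot $g_{ca}^\top$ restricts to $\bar g_{ca}$ on the hypersurface (no contribution), and the Schouten slot yields the discrepancy $\Rho_{ca}^\top - \bar\Rho_{ca}$ which by~\eqref{Fial} equals $\F_{ca} - H\IIo_{ca} - \tfrac{1}{2}\bar g_{ca} H^2$. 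Combining these with the $N^A L_c{}^B V_B$ term supplied by the first equality, all $H\IIo$ and $H^2$ contributions are designed to cancel in concert, leaving precisely the off-diagonal entries $(\F_{ca}, -\F_c{}^b)$ of the claimed endomorphism $\F_c{}^A{}_B$.

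The primary obstacle, and the step requiring the most careful bookkeeping, is in the second equality: one must track how the nontrivial $H$-dependence in the identification $N^\perp \cong \cT\Sigma$ propagates through each slot and verify the exact cancellation between (a)~the $L_c{}^B V_B$ transferred from the first equality, (b)~the $H\IIo_{ca}$ correction coming from the Fialkow definition~\eqref{Fial}, and (c)~the $H^2$ term in the same definition, together with the $\IIo$-terms from the Gau\ss\ formula. Conceptually the cancellation is forced by conformal invariance of both sides, so the only genuine work is verifying that no residual $H$-dependent terms survive and that the resulting off-diagonal structure of $\F_c{}^A{}_B$ has the precise form quoted; the dimensional restriction $d \geq 4$ is needed so that the bottom-slot expression for $L^{AB}$ in~\eqref{LAB} is well-defined and Proposition~\ref{firsthol} applies.
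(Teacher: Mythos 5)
Your strategy coincides with the paper's, merely carried out in the opposite order: for the first equality you differentiate the constraint $N_AV^A\stackrel\Sigma=0$ and invoke Corollary~\ref{gradI} with~\eqref{LAB} to identify $\nabla_c^\top N_B$ with $L_{cB}$; for the second you perform a slot-by-slot comparison in a scale, using the isomorphism $N^\perp\cong\cT\Sigma$ (the $U$-matrix rotation of~\eqref{Tisomorphism}), the Gau\ss\ formula~\eqref{hypgrad}, and the Fialkow--Gau\ss\ identity~\eqref{Gausstrace}. The paper establishes the slot-by-slot part first and the projector identity last, but that ordering is cosmetic.

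There is, however, a sign slip in your first step. The Leibniz relation $N_A\nabla_c^\top V^A\stackrel\Sigma=-(\nabla_c^\top N_A)V^A$ is correct, but the next claim, ``$-(\nabla_c^\top N_A)V^A=L_c{}^BV_B$'', is not. From Corollary~\ref{gradI}, the tangential gradient of the normal tractor has slots $(0,\ \IIo_{cb},\ -\tfrac{1}{d-2}\nablab.\IIo_c)$, which is precisely the $c$-row of $L$ in~\eqref{LAB}; so $\nabla_c^\top N_B=L_{cB}$, hence $(\nabla_c^\top N_A)V^A=L_c{}^BV_B$ and therefore $N_A\nabla_c^\top V^A=-L_c{}^BV_B$. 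Substituting into $\Sigma^A_B\nabla_c^\top V^B=\nabla_c^\top V^A-N^A\,N_B\nabla_c^\top V^B$ then produces $+\,N^AL_c^BV_B$, as required; with the sign you wrote you would obtain $-\,N^AL_c^BV_B$ instead. A smaller imprecision appears in the second-equality sketch: the claim that the metric slot contributes nothing is not right, because the identification $N^\perp\cong\cT\Sigma$ shifts $v^-\mapsto\bar v^-=v^-+\tfrac12 H^2 v^+$, so the $\bar g_{ca}v^-$ term of the tractor connection produces the $\tfrac12\bar g_{ca}H^2 v^+$ piece that cancels against the $\tfrac12\bar g\,H^2$ term in the Fialkow tensor~\eqref{Fial}; tracking that contribution is part of what makes the bookkeeping close.
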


\begin{proof} 
Let us fix an ambient scale $g\in \cc$. This induces a boundary scale $\bar g\in \cc_\Sigma$. Now recall (see~\cite[Section 3.2]{GW15})
that the isomorphism between
 the subbundle~$N^\perp$ orthogonal to the normal tractor (with respect to the tractor metric~$h$)
along~$\Sigma$
and 
the intrinsic hypersurface tractor bundle~$\ct\Sigma$ gives a map between
sections 
 expressed in  scales~$g$ and~$\bar g$, respectively:
\begin{equation}\label{Tisomorphism}
\big[V^A\big]_g:=\begin{pmatrix}v^+\\[1mm]v_a\, \\[2mm]v^-\end{pmatrix}\stackrel{\cong}\longmapsto
\begin{pmatrix}
v^+\\[1mm]v_a-\hat n_aH v^+\\[2mm]v^-+\frac12 H^2 v^+
\end{pmatrix}=\big[U^A{}_B\big]_{\bar g}^g\, \big[ V^B\big]_g=:\big[\bar V^A\big]_{\bar g}\, ,
\end{equation}
where~$V^A\in\Gamma (N^\perp)$ and~$\bar V^A\in  \Gamma(\ct \Sigma)$.
Here the~$SO(d+1,1)$-valued matrix
$$
\big[U^A{}_B\big]_{\bar g}^g:=\begin{pmatrix}1&0&\ 0\ \\[2mm]-\hat n_a H&\delta_a^b&0\\[3mm]-\frac12 H^2&\hat n^b H&1\end{pmatrix}\, ,
$$
and  
the unit conormal $\hat n$ has been used to 
identify sections of~$T^*M\big|_\Sigma$ and~$T^*\Sigma$.
(Note that 
the map in Equation~\nn{Tisomorphism} is the identity
for tractors along~$\Sigma$  in the joint kernel of the contraction maps~$X_\llcorner$ and~$N_{\llcorner}$.)

Thus, in terms of the above ambient and boundary splittings  we need need to show that
$$
\big[ U^B{}_C\big]_{\bar g}^g \, \big[\Sigma^C_D \nabla_a^\top V^D\big]_g =  
\big[ U^B{}_C\big]_{\bar g}^g\, 
\big[\nabla_a^\top V^C+N^C L_a^D V_D\big]_g
= \big[\bar\nabla_a \bar V^B + \F_a{}^B{}_C \bar V^C\big]_{\bar g}\, .$$
 Now, along~$\Sigma$, we have (using the expression for the normal tractor in Equation~\nn{normaltractor})  that 
 $V_A=(v^-,v_a,v^+)\in\Gamma(N^\perp)$ obeys $\hat n.v\stackrel\Sigma=Hv^+$ while the isomorphism
 between $\Gamma(N^\perp)$ and $\Gamma(\ct \Sigma)|_\Sigma$, given 
 in scales $(g,\bar g)$ in Equation~\nn{Tisomorphism}, maps $V_A$ to $\bar V_A=(\bar v^-,\bar v_a,v^+)$ where
 $$\bar v_a\stackrel\Sigma=v^\top_a\, ,\qquad \bar v^-\stackrel\Sigma=v^-+\frac12 H^2 v^+\, .$$
Using  the expression for the tractor connection acting on a standard tractor in Equation~\nn{trconn} applied to our   choice of ambient  scale~$g$ we have
$$
[\nabla_a^\top V^B]\stackrel\Sigma=\begin{pmatrix}
\nabla_a^\top v^+-v_a^\top\\[1mm]
\nabla_a^\top v_b + (\Rho_{ab}-\hat n_a \Rho(\hat n,b))v^+ + (g_{ab}-\hat n_a\hat n_b)v^-\\[1mm]
\nabla_a^\top v^--\Rho(a,v)+\hat n_a \Rho(\hat n,v)\end{pmatrix}\, .
$$
We now simplify, slot by slot, each expression on the right hand side, beginning at the top:
$$
\nabla_a^\top v^+-v_a^\top\stackrel\Sigma=\nablab_a v^+-\bar v_a\, .
$$
For the middle slot we have
\begin{equation*}
\begin{split}
\nabla_a^\top v_b &+ (\Rho_{ab}-\hat n_a \Rho(\hat n,b))v^+ + (g_{ab}-\hat n_a\hat n_b)v^-\\[2mm]
&=
\nablab_a \bar v_b-\hat n_b \II_a^c \bar v_c+\II_{ab} H v^+ + \hat  n_b (\nablab_a H) v^+ + \hat n_b H\nablab_a v^+\\&\quad
+\Rho_{ab}^\top v^+  +\hat n_b \Rho(\hat n,a) v^+ -\hat n_a \hat n_b\Rho(\hat n,\hat n)v^++\bar g_{ab} v^-\\[2mm]
&=
\nablab_a \bar v_b
-\hat n_b\Big(\IIo_{ac}\bar v^c- \frac{(\nablab.\IIo_a) v^+}{d-2}\Big)
+\hat n^b H \big(\nablab_a v^+ -\bar v_a\big)\\
&
+\Big(\Rho_{ab}^\top 
+H\IIo_{ab} +\frac12 \bar g_{ab} H^2\Big) v^+ 
  +\bar g_{ab} \bar v^-\, .
\end{split}
\end{equation*}
Here we have used the traced-Gau\ss--Mainardi Equation~\nn{Mainarditrace} to handle  gradients of mean curvature.
Observe that for $d\geq 4$, the 
last term in brackets, by virtue of the Gau\ss--Fialkow Equation~\nn{Gausstrace}, becomes simply $\bar \Rho_{ab}+\F_{ab}$. So the middle slot is
$$
\nablab_a \bar v_b + (\bar \Rho_{ab}+\F_{ab}) \bar v^+
 +\bar g_{ab} \bar v^-
-\hat n_b\Big(\IIo_{ac}\bar v^c- \frac{(\nablab.\IIo_a) v^+}{d-2}\Big)
+\hat n^b H \big(\nablab_a v^+ -\bar v_a\big)
  \, .
$$  
For the bottom slot we have, using the same method at $d\geq 4$,
\begin{equation*}
\begin{split}
\nabla_a^\top v^-&-\Rho(a,v)+\hat n_a \Rho(\hat n,v)\\[1mm]&=
\nablab_a \bar v^-
 -(
 \bar \Rho_{ab} +\F_{ab})\bar v^b
-\frac12 H^2 (\nablab_a v^+-\bar v_a) 
 +H\Big(\IIo_{ab}\bar v^b
- \frac{1}{d-2}\nablab.\IIo_av^+\Big)
\, .
\end{split}
\end{equation*}
Putting the three slots back together, we find that  
~$[\nabla_a^\top V^B]$  (along~$\Sigma$) is
\begin{eqnarray*}
\begin{pmatrix}
1&0&0\ \\[2.5mm]
\, \hat n_b H&\delta_{b}^c&0\ \\[2.5mm]
-\frac{H^2}{2}&\!-\hat n^c H\, &1\ 
\end{pmatrix}
\left[
\begin{pmatrix}
\nablab_a v^+-\bar v_a\\[3mm]
\nablab_a \bar v_c+\bar\Rho_{ac} v^++ 
\bar g_{ac} \bar v^-\\[3mm]
\nablab_a \bar v^- -\bar\Rho_{ab} \bar v^b  
\end{pmatrix}
-
\begin{pmatrix}
0\\[3mm]
\hat n_c\\[3mm]0
\end{pmatrix}L_a^D \bar V_D+
\begin{pmatrix}
0\\[3mm]
\, \F_{ac} \, v^+\\[3mm]
-\F_{ab} \, \bar v^b\ \end{pmatrix}
\right]\, ,
\\[3mm]
\end{eqnarray*}
where (according to Equation~\nn{LAB})~$L_a^D \bar V_D=\IIo_{ad}\bar v^d- \frac{(\nablab.\IIo_a)  v^+}{d-2}$. This establishes the second equality  displayed at the beginning of the proof. It remains to establish the first equation shown there. For that note  that 
$$
\Sigma^A_B \nabla_c^\top V^B \stackrel\Sigma= \nabla_c^\top V^A+N^A(\nabla_c^\top N^B) V_B\, . 
$$
Corollary~\ref{gradI} combined with 
Equation~\nn{LAB}
implies that $\nabla_c^\top N^B = L_c^B$ and this completes the proof.
\end{proof}

\section{Extrinsic conformal Laplacian powers}
\label{invops}

An important component in our calculus  is the construction of extrinsically coupled invariant differential operators.
The key notion here are tangential operators
as defined in~\cite{GW}.

\begin{definition}\label{tangential}
Let $\sigma$ be a defining density and ${\mathcal O}$ be a smooth map on tractor bundle section spaces $\Gamma(\ct^\Phi M[w])\to
\Gamma(\ct^{\Phi'} M[w'])$. Then if
\begin{equation}\label{Os=sO}
{\mathcal O}\circ \sigma=\sigma \circ {\mathcal O'}
\end{equation}
where here $\sigma$ denotes the multiplicative operator sending $\Gamma(\ct^{\Phi} M[w])\to
\Gamma(\ct^{\Phi} M[w+1])$ (for any $\Phi$)
and ${\mathcal O'}$ is any smooth section map $\Gamma(\ct^\Phi M[w+1])\to
\Gamma(\ct^{\Phi'} M[w'+1])$, we call the operator ${\mathcal O}$ {\it tangential}.
\end{definition}

The above definition extends to vector bundles where multiplication of sections by a defining density $\sigma$ is well-defined.

\begin{example}
The map $\Gamma(\wedge^\bullet M[w])\to
\Gamma(\wedge^\bullet M[w+1])$, defined in a choice of scale by 
$$
\omega \mapsto \sigma d\omega -w\,  \varepsilon(n) \omega
$$
with as usual $n=\nabla\sigma$, is tangential.
\end{example}

\begin{remark}
Tangential operators are of particular interest because
we may define
$$\overline{\mathcal O}:\Gamma\big(\ct^\Phi M[w]\big|_\Sigma\big)\to
\Gamma\big(\ct^{\Phi'} M[w']\big|_\Sigma\big)\quad \mbox{ by }\quad
\overline {\mathcal O} \, \bar T := ({\mathcal O} T)\big|_\Sigma\, ,
$$
where $T\in  \Gamma(\ct^\Phi M[w])$
and $\bar T=T|_\Sigma$.
%
%
%
%
\end{remark}

In~\cite{GW}, it is proved that for {\it any} defining density  the operator 
$$
{\mathcal P}_k^\sigma:\Gamma\Big(\ct^\Phi M\Big[\frac{k-d+1}{2}\Big]\Big)\rightarrow \Gamma\Big(\ct^\Phi M\Big[\frac{-k-d+1}{2}\Big]\Big)\, ,\quad k\in {\mathbb Z}_{\geq1}\, ,
$$
defined by
\begin{equation}\label{holP}
{\mathcal P}^\sigma_k:=\Big(\!-\frac{1}{I_\sigma^2}\, I_\sigma .D\Big)^k,
\end{equation}
is tangential. Moreover, for AE structures it is shown that the above gives a holographic formula for the conformally invariant Laplacian powers of~\cite{GJMS}. In~\cite[Section 7.1]{GW15} it is shown that, by taking $\sigma$ to be a conformal unit defining density, the above construction  gives extrinsically coupled analogues~$\Pop_k$ of conformally invariant Laplacian powers determined by the conformal embedding $\Sigma\hookrightarrow M$.
An interesting feature is that for $k$ odd, the construction naturally produces a leading term in which the trace-free second fundamental form patly replaces the role of the inverse metric.
Here we will exploit our conformal calculus to compute explicit formul\ae\ for~$\Pop_k:=\overline {{\mathcal P}_k^\sigma}$ with~$k=2,3$.

\begin{proposition}\label{GJMS23}
Acting on tractors of weight~$\frac{3-d}{2}$ and~$\frac{4-d}2$, respectively, 
\begin{equation*}
\begin{array}{lcll}
\Pop_2&\stackrel\Sigma=&\Delta^{\!\top}+\frac{3-d}2\Big[\bar \J-\frac{K}{2(d-2)}\Big]
\, ,& d\geq 3\, ,\\[3mm]
\Pop_3&\stackrel\Sigma=&-8\Big[
\IIo^{ab}\nabla_a^\top\nabla_b^\top+\big(\nablab.\IIo^b-n^a\RR^{\!\!\!\bm \sharp}_{a}{\!\!}^b\big)\nabla_b^\top
-\frac12 \, n^a(\nabla^b\RR_{ab}^{\!\!\!\bm \sharp}	)\\[1mm]&&\quad
-\frac12 \, \frac{2-\frac d2}{d-3}\, \Big(\nablab.\nablab.\IIo-(d-4)\IIo^{ab}
\bar\Rho_{ab}
+(d-2)\IIo^{ab}{\mathcal F}_{ab}\big)
\Big)\Big] 
\, ,&d\geq4\, .
\end{array}
\end{equation*}
\end{proposition}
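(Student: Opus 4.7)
The plan is to exploit the construction recalled in~\eqref{holP}: for any conformal unit defining density $\sigma$, the operator ${\mathcal P}^\sigma_k = (-I_\sigma^{-2}\,I_\sigma\cdot D)^k$ is tangential at weight $w_k := (k-d+1)/2$, and $\Pop_k = \overline{{\mathcal P}^\sigma_k}$. Because $I_\sigma^2 = 1 + O(\sigma^d)$ and each $I\cdot D$ decreases the $\sigma$-order by at most one, every factor of $(I^2)^{-1}$ restricts to $1$ on $\Sigma$ provided $k<d$. Hence
$$
\Pop_k \stackrel{\Sigma}{=} (-1)^k\,(I\cdot D)^k\big|_\Sigma,
$$
and the problem reduces to evaluating $(I\cdot D)^k$ on $\Sigma$ at weight $w_k$. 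For $k=2$ this is essentially Remark~\ref{Yamrem}: substituting $w_2 = (3-d)/2$ in~\eqref{ID2} makes $d+2w_2-3 = 0$, which kills every normal-derivative term, while $-(d+2w_2-4) = 1$; what survives is exactly $\Delta^{\!\top} + \tfrac{3-d}{2}\bigl[\bar\J - \tfrac{K}{2(d-2)}\bigr]$.

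For $k=3$, I would decompose $(I\cdot D)^3 = (I\cdot D)^2 \circ (I\cdot D)$ acting on a weight $w_3 = (4-d)/2$ tractor $\phi$. The inner application gives
$$
\psi \;:=\; I\cdot D\,\phi \;=\; 2\bigl(\nabla_n + \tfrac{4-d}{2}\rho\bigr)\phi \;-\; \sigma\bigl(\Delta + \tfrac{4-d}{2}\J\bigr)\phi,
$$
a weight $w_2 = (2-d)/2$ tractor. The outer factor is then expanded using~\eqref{ID2} at $w = w_2$, where $d+2w-3 = -1 \neq 0$, so the $\nabla_n^2\psi$, $\nabla_n\psi$ and $\Rho(n,n)\psi$ contributions must all be carried. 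Leibniz together with $\nabla_n\sigma \stackrel{\Sigma}{=} 1$ and $\nabla_n^k\sigma \stackrel{\Sigma}{=} 0$ for $k\geq 2$ reduces $\nabla_n^j\psi|_\Sigma$ (for $j\leq 2$) to a polynomial in $\nabla_n^\ell\phi|_\Sigma$ (for $\ell \leq 3$). The normal-derivative jets of $\rho$ that appear are supplied by Lemma~\ref{HII} at $j=0$, Lemma~\ref{lineWillmore} at $j=1$, and Lemma~\ref{W3} at $j=2$. In parallel, the ambient Laplacian is split via~\eqref{opid}, the traced Codazzi--Mainardi identity~\eqref{Mainarditrace} trades $\bar\nabla H$ for $\bar\nabla\cdot\IIo$, and the Fialkow--Gauss relation~\eqref{Gausstrace} converts ambient Schouten components into intrinsic Schouten and the Fialkow tensor $\F_{ab}$. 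Commutators of $\nabla_n$ with $\nabla_a^\top$ on tractors produce the $\IIo_a{}^b\nabla_b^\top$ couplings together with the tractor-curvature terms $n^a\RR^{\!\!\bm \sharp}_{a}{\!\!}^b$ and $n^a\nabla^b\RR^{\!\!\bm \sharp}_{ab}$.

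The principal obstacle is bookkeeping: the raw composition produces a polynomial in $\nabla_n^j\phi|_\Sigma$ for $j\leq 3$, and tangentiality of $\Pop_3$ forces every coefficient with $j \geq 1$ to cancel identically. Exhibiting that cancellation is the technical heart of the proof and simultaneously serves as an internal consistency check. Once achieved, the surviving tangential expression organizes as claimed: the leading term $\IIo^{ab}\nabla_a^\top\nabla_b^\top$ emerges from the $\nabla_n^2$ slot of $(I\cdot D)^2$ acting on the $\nabla_n\phi$ produced by the inner $I\cdot D$, once a commutator $[\nabla_n,\nabla_a^\top]\sim \IIo_a{}^b\nabla_b^\top$ converts a transverse index into a tangential one; the bracketed combination $\nablab\cdot\nablab\cdot\IIo - (d-4)\IIo^{ab}\bar\Rho_{ab} + (d-2)\IIo^{ab}\F_{ab}$ is exactly the object that Lemma~\ref{W3} delivers for $\nabla_n^2\rho|_\Sigma$ after invoking~\eqref{Gausstrace}, which is how the rational coefficient $\tfrac{2-d/2}{d-3}$ arises; and the first-order and zeroth-order tractor-curvature contributions $n^a\RR^{\!\!\bm \sharp}_a{}^b\nabla_b^\top$ and $n^a\nabla^b\RR^{\!\!\bm \sharp}_{ab}$ are those generated by the tractor-coupled Ricci identities applied during the normal/tangential commutations. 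The hypothesis $d\geq 4$ is used both to ensure $k = 3 < d$ (so that the reduction $\Pop_3 \stackrel{\Sigma}{=} -(I\cdot D)^3|_\Sigma$ is valid) and to keep the $(d-3)$ in the denominator of the coefficient finite; the overall $-8$ packages the sign $(-1)^3$ with the leading coefficients of the two composition factors.
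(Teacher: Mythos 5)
Your overall plan is sound and follows essentially the same route as the paper's proof: reduce $\Pop_k$ along $\Sigma$ to $(-1)^k(I\cdot D)^k$ (valid for $k<d$ since $1/I_\sigma^2 = 1 + O(\sigma^d)$ and each factor of $I\cdot D$ lowers the $\sigma$-order by one), dispose of $\Pop_2$ by specialising Equation~\eqref{ID2} at $w = \tfrac{3-d}{2}$, and then unwind $(I\cdot D)^3$ by Leibniz and commutator manipulations along $\Sigma$, with the jets of $\rho$ supplied by Lemmas~\ref{HII},~\ref{lineWillmore} and~\ref{W3} (the paper instead invokes the equivalent Lemma~\ref{rhonnn} together with~\eqref{boxH},~\eqref{Mainarditrace} and~\eqref{Gausstrace}). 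The bracketing you choose, $(I\cdot D)^2\circ(I\cdot D)$ fed through Equation~\eqref{ID2}, is a mild repackaging of the paper's $I\cdot\hD\circ I\cdot D\circ I\cdot\hD$; the latter exploits the fact that the middle factor acts at the boundary Yamabe weight $1-\tfrac d2$ where $I\cdot D$ collapses to $-\sigma(\Delta+(1-\tfrac d2)\J)$, but either organisation leads to the same computation.

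However, there is one genuine error in the plan: the claim that $\nabla_n^k\sigma \stackrel{\Sigma}{=} 0$ for $k\geq 2$ is false. That identity holds for a \emph{Riemannian} unit defining function, where $|\nabla s|^2\equiv 1$ forces $\nabla_n s\equiv 1$ and hence $\nabla_n^k s\equiv 0$ for $k\geq 2$. For a \emph{conformal} unit defining density the asymptotics are instead governed by $I^2_\sigma = 1+\sigma^d B$, i.e.\ $n^2 = 1 - 2\rho\sigma + \sigma^d B$, so $\nabla_n\sigma = n^2$ is \emph{not} constant near $\Sigma$; rather
\begin{equation*}
\nabla_n^2\sigma \stackrel{\Sigma}{=} \nabla_n n^2 \stackrel{\Sigma}{=} -2\rho \stackrel{\Sigma}{=} 2H\, ,
\end{equation*}
with further nonzero, curvature-dependent jets at higher order. (Compare the paper's operator identity $[\Delta,\sigma]\stackrel{\Sigma}{=} 2[\nabla_n + \tfrac d2 H]$, whose $H$-term has exactly this origin.) In your reduction this matters concretely: when computing $\nabla_n^2\psi|_\Sigma$ with $\psi = 2(\nabla_n + \tfrac{4-d}{2}\rho)\phi - \sigma(\Delta + \tfrac{4-d}{2}\J)\phi$, the piece $\nabla_n^2\big(-\sigma(\Delta+\cdots)\phi\big)\big|_\Sigma$ acquires the contribution $-2H(\Delta+\cdots)\phi$ from $\nabla_n^2\sigma$, which your identity would discard. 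Dropping it spoils the cancellation of the $\nabla_n^j\phi$ terms ($j\geq 1$) that you rightly flag as the internal consistency check, and would corrupt the $H$- and $\IIo$-dependent coefficients in the stated formula. Replacing your false identity by the correct jets of $\sigma$ (equivalently, working systematically with the operator identities $\nabla_n\circ\sigma\stackrel{\Sigma}{=}1$ and $[\Delta,\sigma]\stackrel{\Sigma}{=}2[\nabla_n+\tfrac d2 H]$, as the paper does) repairs the argument.
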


\begin{proof}
The result for~$\Pop_2$ was proven in Lemma~\ref{TANGD}.
For~$\Pop_3$, we initially assume only~$d\geq 3$ and now compute along~$\Sigma$:
\begin{eqnarray*}
\frac14 \, \Pop_3&=&I_{\sigma}.\hD\  I_{\sigma}.D\  I_{\sigma}.\hD\\
& =&
\Big(\nabla_n+\frac d2 H\Big)\ 
\Big(-\sigma\big[\Delta+\big(1-\frac d2\big)\J\big]\Big)\
\Big(\nabla_n+(2-\frac d2)\rho-\frac{\sigma}2\big[\Delta+\big(2-\frac d2\big) \J\big]\Big)\\[2mm]
&=&[\nabla_n,\Delta]\, +\, 2H\Delta\, +\, \big(\J+(d-4)(\nabla_n\rho)\big)\nabla_n\, -\, (d-4)(\nablab^a H)\nabla_a^\top\\[1mm]
&+&
\frac{d-4}{2}\, \big(\!-\!(\bar\Delta H)+(\nabla_n^2\rho)+(d-2)H(\nabla_n\rho)
-(\nabla_n\J)-H\J\big)\, .
\end{eqnarray*}
To obtain the last line we used the operator identities~$[\nabla_n,\sigma]\stackrel\Sigma=1$ and
$[\Delta,\si]\stackrel\Sigma=2[\nabla_n+\frac d2 H]$ (these follow from  Lemma~\ref{HII}) 
and then Lemma~\nn{Laplaces} to handle the Laplace operator acting on densities along~$\Sigma$.
To expedite the following computations we introduce the notation 
$$
\RR_{ab}^\sharp:=[\nabla_a,\nabla_b]\, ,
$$
for the  operator given by the commutator of  connections acting on mixed tensor-tractor quantities. So in particular, for any $\Gamma(TM)$-valued operator $v^c$, we have the
operator identity 
$$\RR_{ab}^\sharp \circ v^c = \RR_{ab}^{\!\!\! \bm\sharp} \circ v^c+ R_{ab}{}^c{}_d v^d\, .
$$
We now focus on the first two operators in the last line of the first display of the proof above. 
\begin{eqnarray*}
[\nabla_n,\Delta]+2H\Delta&=&\phantom{-\!}\{n^a\RR_{ab}^{\sharp}-(\nabla_b n_a)\nabla^a,\nabla^b\}+2H\Delta\\
&\stackrel\Sigma=&\phantom{-}
2n^a\RR^{\!\!\!\bm \sharp}_{a}{\!\!}^b\, \nabla_b^\top-n^b\Ric_{ba}\nabla^a+(n^a\nabla^b\RR_{ab}^{\!\!\!\bm\sharp})\\&&-2(\II^{ab}+n^an^bH)\nabla_a\nabla_b+2H\Delta-(\Delta n_a)\nabla^a
\\[2mm]
&=&-2\IIo^{ab}\nabla_a^\top\nabla_b^\top+
\big(2n^a\RR^{\!\!\!\bm \sharp}_{a}{\!\!}^b-(d-2)(n^a\Rho_a^b)^\top\big)\nabla_b^\top\\&&
-\big((d-2)\Rho(n,n)+\J+2\IIo_{ab}\IIo^{ab}\big)\nabla_n+(n^a\nabla^b\RR_{ab}^{\!\!\! \bm\sharp})
\\
&&
-(\nabla^\top_b[\II^{ab}+n^a n^b H])\nabla_a
-n_b(\nabla_n\nabla^b n^a)\nabla_a
\\[2mm]
&=&
-2\IIo^{ab}\nabla_a^\top\nabla_b^\top+
\big(2n^a\RR^{\!\!\!\bm \sharp}_{a}{\!\!}^b-(d-2)(n^a\Rho_a^b)^\top-\nablab.\IIo^b-\nablab^b H\big)\nabla_b^\top\\&&
-\big((d-2)\Rho(n,n)+\J+\IIo_{ab}\IIo^{ab}\big)\nabla_n+(n^a\nabla^b\RR_{ab}^\sharp)
\\&&
-n_b\big(\frac12\nabla^b \nabla^a n^2+R_{c}{}^{ba}{}_dn^c n^d-(\nabla_b n^c )(\nabla_c n^a)\big)\nabla_a
\end{eqnarray*}
In the first line, note that $\{\cdot,\cdot\}$ denotes the operator anticommutator while the second and third lines employed Lemma~\ref{HII}.
Using the antisymmetry of the Riemann tensor in its first two slots and the conformal unit defining density property~\nn{nid}, since $d\geq3$, 
the
very last line of the above display becomes
$$
\big(\nabla_n\nabla^a(\rho\si)\big)\nabla_a+(\nabla_n n^c)(\nabla_c n^a)\nabla_a\stackrel\Sigma=-(\nablab^a H)\nabla_a^\top+2(\nabla_n\rho)\nabla_n\, .
$$
Using this and the traced-Codazzi--Mainardi Equation~\nn{Mainarditrace} 
we obtain
\begin{equation*}
\begin{split}
[\nabla_n,\Delta]+2H\Delta\stackrel\Sigma=
&-2\IIo^{ab}\nabla_a^\top\nabla_b^\top +
\big(2n^a\RR^{\!\!\!\bm \sharp}_{a}{\!\!}^b-2\nablab.\IIo^b+(d-4)\nablab^b H\big)\nabla_b^\top
\\&
-\big(\J+(d-4)\nabla_n\rho\big)\nabla_n+(n^a\nabla^b\RR_{ab}^{\!\!\!\bm\sharp})\, .
\end{split}
\end{equation*}
Putting the above identity together with the first display of this proof we have
\begin{eqnarray*}
\frac14\,  \Pop_3
&=&-\, 2\IIo^{ab}\nabla_a^\top\nabla_b^\top
-2\big(\nablab.\IIo^b-n^a\RR^{\!\!\!\bm \sharp}_{a}{\!\!}^b\big)\nabla_b^\top
+(n^a\nabla^b\RR_{ab}^\sharp)
\\&&
+\, \frac{d-4}{2}\big(-(\bar\Delta H)+(\nabla_n^2\rho)+(d-2)H(\nabla_n\rho)
-(\nabla_n\J)-H\J\big)\, .
\nonumber
\end{eqnarray*}
The term $\nabla_n^2 \rho$ involves four normal derivatives of the conformal unit defining density~$\si$ so is only determined by the hypersurface embedding when $d\geq 4$ which we henceforth assume.  Using Equation~\nn{boxH}
to handle $\bar \Delta H$ and Lemmas~\ref{lineWillmore},~\ref{rhonnn}
and~\ref{Einstein} for $\nabla_n\rho,\nabla_n^2 \rho$, as well as the Fialkow--Gau\ss\ Equation~\nn{Gausstrace}, we obtain  the quoted result for $\Pop_3$.
\end{proof}

\begin{remark}\label{conjecture}
Note that $\Pop_2$ is a Laplace-type operator in the usual sense. On the other hand,  viewing the trace-free second fundamental form as a proxy for the inverse metric, the leading term of $\Pop_3$ is an ``extrinsic Laplacian''.
For $k=2,3$, the explicit formul\ae \ above for $\Pop_k$ have a pole at $d=k$. Hence, a dimensional continuation argument along the lines given in Remark~\ref{dimreg}  implies that the residue
of these poles is separately conformally invariant in dimension $d=k$. For $\Pop_2$, this quantity vanishes but in dimension $d=3$, a computation similar to that given in the remark, shows that the residue is precisely the 
obstruction density. It is natural to conjecture that this property will persist for higher dimensional extrinsic conformal Laplacian powers and thus provide an alternate method to compute obstruction densities (at least modulo conformally invariant densities with lower order leading derivative structure).
\end{remark}

The operator $\Pop_2$ is seen to be the intrinsic tractor-coupled Yamabe operator 
plus an invariant extrinsic term (proportional to the rigidity density).
Since it is the result of a lengthy computation, it is worthwhile demonstrating conformal invariance of~$\Pop_3$. This is done in the following lemma and proposition:

\begin{lemma}
Let $F_{ab}\in\Gamma(\Lambda^2 M)$ and view $F_{ab}$ as a weight zero operator on tractors, acting by multiplication. Then $\nabla^a\circ F_{ab}+F_{ab}\circ \nabla^a$ is an invariant operator on weight $2-\frac d2$ tractors  mapping $\Gamma(\ct^\Phi M[2-\frac d2])\to \Gamma(T^*M\otimes\ct^\Phi M[-\frac d2])$.
\end{lemma}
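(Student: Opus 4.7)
The plan is to compute the transformation of $\mathcal{D}_b := \nabla^a\circ F_{ab} + F_{ab}\circ \nabla^a$ directly under conformal rescaling $\hat g = \Omega^2 g$ with $\Upsilon_a := \Omega^{-1}\nabla_a\Omega$, and show that the resulting anomaly vanishes precisely when $w = 2-\tfrac{d}{2}$. Setting $\delta_c := \nabla^{\hat g}_c - \nabla^{g}_c$, which is an algebraic (zeroth-order) difference of connections, the quantity to examine is
\[
\mathcal{D}^{\hat g}_b T - \mathcal{D}^{g}_b T \;=\; \bg^{ac}(\delta_c F_{ab})\, T \;+\; 2\, F_{ab}\, \bg^{ac}(\delta_c T),
\]
where the factor of $2$ arises precisely because $F_{ab}$ is a pure multiplication operator (so the two summands of $\mathcal D_b$ contribute on equal footing when Leibniz is applied to $\delta_c(F_{ab}T)$).

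First I would compute $\delta_c F_{ab}$ from the standard transformation of Christoffel symbols --- this produces a five-term expression linear in $\Upsilon$ --- and contract with $\bg^{ac}$. The antisymmetry $\bg^{ac}F_{ac}=0$ kills one term outright, and relabelling dummy indices reduces the remaining four to $(d-4)\,\Upsilon^a F_{ab}$. Second, I would decompose $T = \sigma\, T_0$ with $\sigma \in \Gamma(\ce M[w])$ and $T_0 \in \Gamma(\ct^\Phi M)$ of weight zero, and invoke the fact that the tractor connection on the weight-zero bundle $\ct^\Phi M$ is intrinsically conformally invariant, so that $\delta_c T_0 = 0$; all of the non-invariance sits in the Levi-Civita connection on the density factor, giving $\delta_c \sigma = w\Upsilon_c\sigma$ and hence $\delta_c T = w\Upsilon_c T$ by Leibniz. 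Combining the two pieces, and using that $F_{ab}$ commutes with $\bg^{ac}$ and with any tractor endomorphism, yields
\[
\mathcal{D}^{\hat g}_b T - \mathcal{D}^{g}_b T \;=\; \bigl[(d-4)+2w\bigr]\,\Upsilon^a F_{ab}\, T,
\]
which vanishes identically iff $w = 2-\tfrac{d}{2}$. The $-2$ weight shift carried by $\bg^{ac}$ is what places the target in $T^*M\otimes \ct^\Phi M[-\tfrac{d}{2}]$, as claimed.

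The main obstacle is the second step: one must argue carefully that no tractor-internal corrections beyond the clean factor $w\Upsilon_c$ enter $\delta_c T$. This relies on the invariance of the tractor connection on weight-zero tractor bundles, a Cartan-geometric input that should be invoked rather than re-derived. A secondary point is the role of symmetrization: each summand of $\mathcal D_b$ individually produces a non-invariance proportional to $\Upsilon^a F_{ab}T$ with coefficients that would not cancel separately; only the symmetric combination $\nabla^a\circ F_{ab}+F_{ab}\circ\nabla^a$ lines up the coefficients as $(d-4)+2w$, which has a unique zero in $w$ for any given $d$.
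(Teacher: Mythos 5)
Your proposal is correct and follows essentially the same route as the paper: both arguments split the operator via the Leibniz rule into the divergence piece $(\nabla^a F_{ab})$ and the piece $2F_{ab}\circ\nabla^a$, then compute the conformal anomaly of each and observe the cancellation $(d-4)+2w=0$ at $w=2-\tfrac d2$ (your $\delta_c$-bookkeeping is just a repackaging of the paper's term-by-term transformation check). The arithmetic and the invocation of the conformal invariance of the tractor connection on weight-zero tractors are both sound.
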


\begin{proof}
Writing $\nabla^a\circ F_{ab}+F_{ab}\circ \nabla^a=2F_{ab}\circ \nabla^a + (\nabla^a F_{ab})$, one only needs to compute each term for $\Omega^2 g\in \cc$ acting on $T\in\Gamma(\ct^\Phi M[2-\frac d2])$. For the first we have $\big(2F_{ab} \nabla^b T\big)|_{\Omega^2 g}=2 F_{ab} (\nabla^b+[2-\frac d2]\Upsilon^b) T$ 
where $\Upsilon=\Omega^{-1}\! \!{\bm d}\! \Omega$ and the right hand side is  given for~$g\in\cc$. For the term $(\nabla^a F_{ab})$, one must compute the transformation of the Levi-Civita connection acting on a two form. It is not difficult to verify that this exactly cancels the inhomogeneous term produced by the first term.
\end{proof}
\noindent
This lemma implies that the operator
$n^a\big[\RR^{\!\!\!\bm \sharp}_{a}{\!\!}^b\circ \nabla_b^\top
+\nabla_b^\top\circ\RR_{a}^{\!\!\! \bm\sharp}{\!\!}^b\big]$ is conformally invariant.
The following proposition  expresses $\Pop_3$ as a sum of this operator, an invariant extrinsic term $L$ and a manifestly invariant tractor operator:
\begin{proposition}
When $d\geq 4$,
$$
\Pop_3\stackrel\Sigma=-8 \, \Big[L^{AB}+\frac{ X^A\! X^BL}{d-3}\Big] {\hD^T}_A{\hD^T}_B +
4\, n^a\big[\RR^{\!\!\!\bm \sharp}_{a}{\!\!}^b\circ \nabla_b^\top
+\nabla_b^\top\circ\RR_{a}^{\!\!\! \bm\sharp}{\!\!}^b\big]
-4\, (d-4) L
\, .$$
\end{proposition}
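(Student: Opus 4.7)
The strategy is to verify the identity in a chosen scale $g \in \cc$ by direct comparison with the explicit formula for $\Pop_3$ established in Proposition~\ref{GJMS23}. Both sides are tangential and conformally invariant operators on weight-$(4-d)/2$ tractors, so matching them in a single scale suffices. The verification splits naturally into a ``curvature'' part and a ``scalar'' part, the latter organised around the slot expansion of $L^{AB}\hD^T_A\hD^T_B$.

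First I would dispose of the curvature operators. Using $\nabla^\top_b\circ\RR^{\!\!\!\bm\sharp}_a{}^b = \RR^{\!\!\!\bm\sharp}_a{}^b\circ\nabla^\top_b + (\nabla^\top_b\RR^{\!\!\!\bm\sharp}_a{}^b)$, the term $4 n^a[\RR^{\!\!\!\bm\sharp}_a{}^b\,\nabla^\top_b + \nabla^\top_b\,\RR^{\!\!\!\bm\sharp}_a{}^b]$ on the right-hand side expands to $8n^a\RR^{\!\!\!\bm\sharp}_a{}^b\nabla^\top_b + 4n^a(\nabla^\top_b\RR^{\!\!\!\bm\sharp}_a{}^b)$. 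Antisymmetry of $\RR^{\!\!\!\bm\sharp}_{ab}$ together with $\nabla_n n^a\stackrel\Sigma= H n^a$ (Lemma~\ref{HII}) forces $n^a(\nabla^b - \nabla^\top_b)\RR^{\!\!\!\bm\sharp}_{ab}\stackrel\Sigma = 0$, so these pieces exactly reproduce the curvature contributions $8n^a\RR^{\!\!\!\bm\sharp}_a{}^b\nabla^\top_b + 4n^a(\nabla^b\RR^{\!\!\!\bm\sharp}_{ab})$ inside the formula of Proposition~\ref{GJMS23}. After this cancellation, what remains to prove is the scalar identity
\begin{equation*}
L^{AB}\hD^T_A\hD^T_B + \tfrac{L\, X^AX^B\hD^T_A\hD^T_B}{d-3} + \tfrac{d-4}{2}L\stackrel{\Sigma}{=} \IIo^{ab}\nabla^\top_a\nabla^\top_b + (\nablab.\IIo^b)\nabla^\top_b - \tfrac{4-d}{4(d-3)}\bigl(\nablab.\nablab.\IIo - (d-4)\IIo^{ab}\bar\Rho_{ab} + (d-2)\IIo^{ab}\mathcal{F}_{ab}\bigr).
\end{equation*}

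Next I would expand $L^{AB}\hD^T_A\hD^T_B$ slot-by-slot, substituting the matrix form~\eqref{LAB} for $L^{AB}$ together with the expression~\eqref{DTA} for $\hD^T_A$ at weight $w=(4-d)/2$ (for which $d+2w-2=2$ and $d+2w-3=1$). Because $L^{AB}$ has no top-slot entries, the only surviving tractor-metric pairings are middle--middle, middle--bottom, and bottom--bottom. The middle--middle pairing yields the leading $\IIo^{ab}\nabla^\top_a\nabla^\top_b$, modulo $H$-dressed cross-terms from the triangular block of~\eqref{DTA}. The middle--bottom pairings, multiplied by the weight factor in the top slot of $\hD^T_A$, produce the $(\nablab.\IIo^b)\nabla^\top_b$ first-order term. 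The bottom--bottom pairing couples to the product of two top slots of $\hD^T$, contributing $\tfrac{\nablab.\nablab.\IIo + (d-2)\bar\Rho_{ab}\IIo^{ab}}{(d-2)(d-3)}$ weighted by $w(w-1)$; the Fialkow--Gau\ss\ identity~\eqref{Gausstrace} then converts the ambient Schouten contraction into its intrinsic analogue together with an $\IIo^{ab}\mathcal{F}_{ab}$ contribution and extrinsic cubic terms. The $X^AX^B L/(d-3)$ contribution is handled using the fact that $X^A$ has only a bottom-slot component, so $X^A\hD^T_A$ picks out the top slot of $\hD^T_A$, which by~\eqref{DTA} and~\eqref{bDXT} acts as the weight operator; iterating yields a scalar proportional to $w(w-1)L$, that must also combine with the $\tfrac{wK_{\rm ext}}{2(d-2)(d+2w-3)}$ correction embedded in the bottom entry of~\eqref{DTA}.

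The hard part is the coefficient bookkeeping. One must verify that the scalar contributions from (a) the bottom--bottom slot of $L^{AB}$, (b) the $X^AX^B L$ term, and (c) the embedded $K$-correction in the bottom of $\hD^T_A$ conspire, after a second application of Fialkow--Gau\ss, to produce precisely the coefficient pattern $(d-4)\IIo^{ab}\bar\Rho_{ab} - (d-2)\IIo^{ab}\mathcal{F}_{ab}$ with the overall factor $\tfrac{4-d}{4(d-3)}$, and that the inhomogeneous density $\tfrac{d-4}{2}L$ emerges from recombining constant pieces in (b) and (c) after Corollary~\ref{Kdot} is used to identify the residual rigidity contractions. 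Once this coefficient check goes through, the identity follows.
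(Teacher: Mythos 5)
Your overall strategy --- strip off the curvature operators, then verify the remaining scalar identity by slot-expanding $L^{AB}\hD^T_A\hD^T_B$ via~\eqref{LAB},~\eqref{DTA}, the tractor connection~\eqref{trconn} and the Fialkow--Gau\ss\ identity~\eqref{Gausstrace}, finally comparing with Proposition~\ref{GJMS23} --- is precisely the route the paper takes, and your handling of the curvature terms is correct (though the appeal to $\nabla_n n^a\stackrel\Sigma=Hn^a$ is superfluous; $n^an^b\nabla_n\RR^{\bm\sharp}_{ab}=0$ follows from antisymmetry alone). However, there are two concrete errors in how you organize the scalar bookkeeping, and at least one of them would derail the coefficient check.

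The main error is your item~(c): the $K$-correction $\tfrac{w\IIo_{cd}\IIo^{cd}}{2(d-2)(d+2w-3)}$ sitting in the bottom slot of $\hD^{TA}$ in~\eqref{DTA} does \emph{not} contribute, and Corollary~\ref{Kdot} is \emph{not} used. Since $L^{AB}$ has no top-slot entries and $X^AX^B$ is pure bottom slot, their lowered-index forms $L_{AB}$ and $X_AX_B$ have no bottom-slot components; hence the bottom slot of $\hD^{TA}$ --- the only slot carrying the $K$-correction --- is never reached in either contraction $L_{AB}\hD^{TA}\hD^{TB}$ or $X_AX_B\hD^{TA}\hD^{TB}$. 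The rigidity contraction $L=\IIo^{ab}{\mathcal F}_{ab}$ in the answer enters solely through the Fialkow--Gau\ss\ trade of the $\Rho_{ab}\IIo^{ab}$ generated by the coupled tractor connection for $\bar\Rho_{ab}\IIo^{ab}$ and ${\mathcal F}_{ab}\IIo^{ab}$, and the inhomogeneous density $\tfrac{d-4}{2}L$ is then pure arithmetic: it is exactly what is required to balance the resulting $-2(d-3)L$ against the $(d-2)L$ of Proposition~\ref{GJMS23} once the $w(w-1)L/(d-3)$ from the $X^AX^B$ term is included. Any attempt to make the $K$-correction or Corollary~\ref{Kdot} produce this term will fail.

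Secondly, you state that both copies of~\eqref{DTA} are used at weight $w=(4-d)/2$ (so $d+2w-3=1$). That is correct only for the inner $\hD^T_B$. The outer $\hD^T_A$ acts on a weight $1-\tfrac d2$ tractor, i.e.\ at the Yamabe weight, where $d+2w-3=-1$ and where Lemma~\ref{TANGD} is excluded by hypothesis; the paper's proof explicitly invokes Remark~\ref{DTYam} to define the contracted operator $L^{AB}\hD^T_A$ at that weight. Your statement that the bottom--bottom pairing is weighted by $w(w-1)$ (rather than $w^2$) shows you are tacitly aware of the weight shift, but a literal substitution of $d+2w-3=1$ in the outer~\eqref{DTA} would produce the wrong sign in the first-order $\nablab.\IIo^a\nabla^\top_a$ coefficient, so the Remark~\ref{DTYam} definition (or at minimum a careful weight shift) must be made explicit.
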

\begin{proof}
We use the following:
(i) Remark~\ref{DTYam} to define~$L^{AB}{\hD^T}_A$ at interior Yamabe weight, 
(ii)~the result for the tractor second fundamental form in~\nn{LAB}, (iii) the canonical tangential Thomas D-operator~\nn{DTA}
and (iv) the tractor connection as given in~\nn{trconn},
 to compute an operator identity on weight~$2-\frac d2$ tractors. This gives
\begin{equation*}\begin{split}
L^{AB}&{\hD^T}_A{\hD^T }_B\ =\  \IIo^{ab} \left[ \nabla^\top_a \Big(\nabla^\top_b+\Big[2 - \frac d2\Big]n_bH\Big)
+\Big[2-\frac d2\Big]\Rho_{ab}
\right]
-\frac{\bar\nabla.\IIo^a}{d-2}\Big[2-d\Big]\nabla^\top_a \\[1mm]
&\qquad\quad\ \,  +\frac{\bar\nabla.\bar\nabla.\IIo+(d-2)\bar\Rho_{ab}\IIo^{ab}}{(d-2)(d-3)}\, \Big[1-\frac d2\Big]\Big[2-\frac d2\Big] \\[1mm]
=\ &\ 
\IIo^{ab}  \nabla^\top_a\nabla^\top_b+\nablab.\IIo^a \nabla^\top_a+\Big[2-\frac d2\Big]\Big(H\IIo^{ab}\IIo_{ab}+\IIo^{ab}\Rho_{ab}
-\frac12\frac{\bar\nabla.\bar\nabla.\IIo+(d-2)\bar\Rho_{ab}\IIo^{ab}}{(d-3)}\Big)\\
=\ &\
\IIo^{ab}  \nabla^\top_a\nabla^\top_b+\nablab.\IIo^a \nabla^\top_a-\frac12\, \frac{2-\frac d2}{d-3}\, \Big(
\bar\nabla.\bar\nabla.\IIo-(d-4)\bar\Rho_{ab}\IIo^{ab}-2(d-3)\IIo^{ab}{\mathcal F}_{ab}\Big)\, .
\end{split}
\end{equation*}
Noting that 
$n^a\RR^{\!\!\!\bm \sharp}_{a}{\!\!}^b\nabla_b^\top
+\frac12n^a(\nabla^b\RR_{ab}^{\!\!\! \bm\sharp})=\frac12\, n^a\big[\RR^{\!\!\!\bm \sharp}_{a}{\!\!}^b\circ \nabla_b^\top
+\nabla_b^\top\circ\RR_{a}^{\!\!\! \bm\sharp}{\!\!}^b\big]$
and comparing with the formula for $\Pop_3$ in Proposition~\ref{GJMS23} completes the proof.
\end{proof}

Proposition~\ref{GJMS23} gives a compact formula for the extrinsic Laplacian  appearing above when~$d=4$, thus the remaining difficulty in computing the~$d=4$ obstruction density in curved ambient spaces is calculating two normal derivatives of the canonical extension~$K_{\rm ext}$ of the rigidity density~$\IIo_{ab}\IIo^{ab}$. Since one normal derivative of the canonically extended trace-free second fundamental form is closely related to the Fialkow tensor, this boils down to computing one normal derivative of the corresponding  extension of the Fialkow tensor. This is the next natural example in the general program of proliferating natural invariants of
the conformal hypersurface structure~$(M,\cc,\Sigma)$ discussed in Section~\ref{invts}. That computation has been  performed in~\cite{GGHW}.

\section{Functionals for critical conformal hypersurface invariants}
\label{critfs}
 
We now consider the construction of critical weight Lagrangian
densities along the conformal hypersurface and thus seek Riemannian
hypersurface invariants, of weight $-d+1$, that yield conformally
invariant integrals. While it is straightforward to construct examples
(see~\cite{Guo}), the most interesting cases give action functionals
that, with respect to variation of the hypersurface embedding, yield
Euler--Lagrange equations with a linear leading
term. For hypersurfaces embedded in 5~dimensional Euclidean space,
such a functional has been constructed~\cite{Guven} by writing down a linear combination of all  possible integrated Riemannian hypersurface invariants and then fixing coefficients by demanding invariance under rigid conformal motions (see also~\cite{YuriThesis}).  These functionals are also considerable interest since they may appear as contributions to extrinsically coupled renormalized volume anomalies~\cite{GW16,Gra}.
   Constructing integrated  conformal hypersurface invariants 
  with leading derivative term quadratic in curvatures
  is rather  difficult, but a
resolution is provided via the extrinsic conformal Laplacians powers~$\Pop_{k}$
 described in Section~\ref{invops} and encapsulated by  Theorem~\ref{kin}.

\begin{proof}[Proof of Theorem~\ref{kin}]
Recall that conformal densities of weight~$-n$ 
may integrated on conformal~$n$-manifolds.
Theorem~7.1 
of~\cite{GW15} establishes that the operator $\Pop_{d-1}$ is 
determined naturally by~$(M,\cc,\Sigma)$
and thus that $N^A \Pop_{d-1} N_A$ is a weight $-d+1={\rm dim}(\Sigma)$
density along~$\Sigma$,  which yields the first statement of the Theorem.

Let $N^A$ be any  smooth extension to $M$ of the normal tractor. For $d$ odd, Theorem~7.1 of~\cite{GW15} also ensures that the operator $\Pop_{d-1}$ has non-zero leading term proportional to~$\big(\Delta^{\!\!\top}\big)^{\frac{d-1} 2}$. We will  
show below, that when $N^A \Pop_{d-1}N_A$ is integrated over the hypersurface~$\Sigma$, 
this leading term of $\Pop_{d-1}$ contributes a term of the form $\IIo^{ab}\bar\Delta^{\frac{d-3}2}\IIo_{ab}$
to the integrand. In particular this involves  $d-3$ derivatives and is quadratic in the second fundamental form. 
We will show that the lower order terms of $\Pop_{d-1}$ cannot 
contribute terms of this order to the integral $\int_\Sigma
N^A \Pop_{d-1} N_A$.
To see this, 
firstly note that  from Proposition~7.3 of~\cite{GW15} we have  
$$\Pop_{d-1}={\mathcal G}^b\circ \nabla^\top_b\, ,$$ for some smooth operator ${\mathcal G}^b$.
Hence
$$N^A \Pop_{d-1} N_A\stackrel\Sigma=[N^A,{\mathcal G}^b]\circ \nabla^\top_b N_A\, ,$$
because $N^A\nabla^\top N_A\stackrel \Sigma=0$. But, because the operator $\nabla^\top$ is tangential, we may use Corollary~\ref{gradI} to see that $\nabla_b^\top N_A^{\phantom{\top}}$ is linear in curvatures.
The operator~${\mathcal G}^b$ can only fail to commute
with~$N^A$ when a $\nabla^\top$, in the expression for~${\mathcal G}^b$, hits $N^A$ 
and produces a second curvature.
Apart from its leading derivative term, the operator  
${\mathcal G}^b$ is necessarily at least linear in curvatures. Hence, only the leading derivative term of $\Pop_{d-1}$ yields a term quadratic in cruvatures, as required.

Thus we can now focus on the leading term of $\Pop_{d-1}$ in the
density~$N^A \Pop_{{d-1}} N_A$ which can be rewritten as
  $N^A\nabla^b{}^\top\big(\Delta^{\!\!\top}\big)^{\frac{d-3} 2}\, \nabla_b^\top N_A$, because reordering derivatives  yields subleading terms involving curvatures. 
 Here, again up to subleading curvature terms, the operator $\nabla^\top$ equals $\bar\nabla$ twisted by the ambient tractor connection. Thus, discarding a divergence because $\Sigma$ is closed, the functional $\int_\Sigma N^A \Pop_{d-1} N_A$ has leading term proportional to 
 $$
 \int_\Sigma (\nabla^b{}^\top \!N^A)\, 
 \big(\Delta^{\!\!\top}\big)^{\frac{d-3} 2}\, \nabla_b^\top N_A^{\phantom{\top}}\, .
 $$
We may use Corollary~\ref{gradI} again to see that $[\nabla_b^\top N_A^{\phantom{\top}}]$  has the form
$$
[\nabla_b^\top N^A] \stackrel\Sigma=
\begin{pmatrix}
\, 0
\\[1mm]
-\IIo_b^a
\\[.5mm]\, \star
\end{pmatrix}\, .
$$
It follows that the leading term of the  functional, as claimed,   is a non-zero multiple of
$$
\int_\Sigma \IIo^{ab}\bar\Delta^{\frac{d-3}2}\IIo_{ab}\, .
 $$
 It is not difficult to check that when varying an embedding of a functional
$\int_\Sigma  \IIo^{ab} K_{ab}$, the contribution to the Euler-Lagrange equation from the variation of the (explicit) trace-free second fundamental form is
$
\nablab^a \nablab^b K_{(ab)\circ}
$,
where $K_{(ab)\circ}$ denotes the trace-free symmetric part of the tensor $K$. Since varying the measure or the operator $\bar\Delta^{\frac{d-3}2}$ necessarily leads to contributions quadratic in $\IIo$, it follows that  the functional 
in the above display contributes only 
$
2\nablab^a\nablab^b \bar\Delta^{\frac{d-3}2}\IIo_{ab}
$,
at linear order in $\IIo$, to the Euler--Lagrange equation.
Employing the  identity~\nn{boxH} we thus obtain the Euler--Lagrange equation
$$
\bar{\Delta}^{\frac{d-1}{2}} H + \mbox{\rm  lower order terms}=0\, ,  
$$
in agreement with the result of Theorem~5.1 of~\cite{GW15}  
for the leading order contribution to the obstruction density.
\end{proof}

\begin{remark}
As we discuss in the following example, the last statement of the above theorem also holds for embedded volumes, except that the  Euler--Lagrange equation is now  quadratic in the second fundamental form as it must be  to   agree with the leading term of the corresponding obstruction density.
It seems plausible  that a similar statement holds for all 
higher, odd dimensional embedded hypersurfaces.
\end{remark}

\begin{example}\label{surfc}
A simple application of our extrinsic Laplacian formul\ae\  is to compute low dimensional examples of the 
action functional density~\nn{kf}. The easiest case is dimension $d=3$ for which we find
$$
N_A \Pop_2 N^A\stackrel\Sigma=
N_A \, \bar y^2\,  I^A=N_A \Delta^{\!\top} I^A=-\IIo_{ab}\IIo^{ab}\, .
$$
The second step above used Proposition~\ref{GJMS23}
 while the last step of this computation relied on Corollary~\ref{gradI} to evaluate $\nabla_a I^A$ as well as Equations~\nn{trconn} and~\nn{hypgrad}, respectively, for the tractor connection and the
relation between tangential and boundary Levi-Civita connections. Hence the functional
$$
\int_\Sigma N_A\,  \Pop_2 N^A=-\int_\Sigma \IIo_{ab}\IIo^{ab}=-\int_\Sigma K\, ,
$$ recovers the well-known Willmore energy~\cite{Willmore} or
(extended to Lorentzian signature) the rigid string action
of~\cite{Polyakov} which justifies calling $K$ the rigidity density.

The above functional appears in the formula for
the renormalized area of a minimal surface embedded in a hyperbolic
3-manifold~\cite{Alexakis}.
It is  interesting to note that the above functional also appears as the
log term coefficient in the asymptotic expansion for the volume
associated with a 2-brane in the AdS/CFT correpondence (and is linked
to the anomaly for boundary observables)~\cite{GrahamWitten}; the corresponding anomaly functionals for hypersurfaces of arbitrary dimensions have recently been computed in~\cite{GW16}.

In the next dimension $d =4$, the computation of $N_A \Pop_3 N^A$ is more involved, but remains simple for conformally flat structures: From Proposition~\ref{GJMS23} we have in this case that $\Pop_3\stackrel\Sigma=-8\big[\IIo^{bc}\nabla_b^\top+(\nablab_b\IIo^{bc})\big]\nabla_c^\top$. We again use Corollary~\ref{gradI}, which gives
$$
[\nabla_c^\top N^A]\stackrel\Sigma=\begin{pmatrix}
0\ \\\IIo^a_c\\[1mm]-\frac{\nablab.\IIo_c}{d-2}\ 
\end{pmatrix}\, .
$$
Since $N_{\!A}^{\phantom{o}\!} \nabla_c^\top N^A = 0$,  we only need to compute the leading double derivative term which again  requires using  Equation~\nn{trconn} for the tractor connection. This yields $$N_C \IIo^{ab}\nabla^\top_a\nabla_b^\top N^C=-\IIo^{ab}\IIo_{a}^c\IIo_{cb}^{\phantom{c}}=-\IIo^{ab}\F_{ab}=-L\, .$$ Hence, as promised, $L$ plays the {\it r\^ole} of a rigidity density for embedded volumes. 
Indeed, for conformally flat structures, it is straightforward to 
compute the embedding  variation of the functional
$$
\int_\Sigma N_A \Pop_3 N^A = 8\int_\Sigma \IIo^{ab}\IIo_{a}^c\IIo_{cb}^{\phantom{c}}=8\int_\Sigma L\, .
$$ 
(Functionals constructed from powers of $\IIo$ have been studied in~\cite{Guo}.)
The resulting Euler--Lagrange equation is ${\mathcal B}=0$ with
${\mathcal B}$ given by the conformally-flat, four-manifold,
obstruction density quoted in Proposition~\ref{4Will}. Details of this
computation and its extension to generally curved conformal structures
is presented in~\cite{GGHW}. There it is shown that, for hypersurfaces
in general 4-manifolds, the functional gradient of \nn{kf} agrees
precisely with the obstruction density.  We note that the functional
$\int_\Sigma L$ in Lorentzian signature could be of interest for a
rigid membrane theory.
\end{example}

\appendix

\section{Proof of Lemma~\ref{W3}}

\label{nablan3rho}
In this section, we employ the notations of section~\ref{comp-ext} and break the proof of Proposition~\ref{W3} into several smaller pieces. The first of these
explicates the terms ``{\it \rm LTOTs}'' of Equation~\nn{rho-step}.

\begin{lemma}
Let $\sigma$ be a conformal unit defining density, then
\begin{eqnarray}
\frac12 \, \nabla_n^3 I^2_{\sigma} +(d-3)\nabla_n^2\rho
=-\nabla_n^2 \big(\gamma^{ab} \nabla_a n_b\big)-\nabla_n\big(5\rho^2+2\J\big)+4\rho^3
+2\rho \J\, .\label{threederivs}
\end{eqnarray}
\end{lemma}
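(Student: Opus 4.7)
The lemma is exactly the identity one gets by applying $\nabla_n^2$ to Equation~\nn{stepk} and restricting the result to $\Sigma$, so the plan is to perform precisely that operation. From the proof of Proposition~\ref{rho-engine} we already have the ``$k=1$'' identity
$$
\frac12\nabla_n I^2_\sigma + (d-1)\rho - \sigma\nabla_n\rho = -\gamma^{ab}\nabla_a n_b - \sigma\bigl(\J + 2\rho^2 - \sigma^{d-1}\rho B\bigr),
$$
and applying $\nabla_n^2$ to both sides will produce the target quantity $\tfrac12\nabla_n^3 I^2_\sigma + (d-1)\nabla_n^2\rho$ together with contributions from $\nabla_n^2$ acting on the explicit $\sigma$-dependent terms on either side.

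The technical input I would use is the second-order Leibniz identity
$$
\nabla_n^2(\sigma f) = \sigma\nabla_n^2 f + 2 n^2\nabla_n f + (\nabla_n n^2)\, f,
$$
which along $\Sigma$ reduces to $2\nabla_n f - 2\rho\, f$ using $\sigma|_\Sigma=0$, $n^2|_\Sigma=1$ and $\nabla_n n^2|_\Sigma = -2\rho$; the last of these follows from Equation~\nn{nid}, since for $d\geq 3$ the $\sigma^d B$ correction is annihilated by a single $\nabla_n$ along $\Sigma$. The same input shows that the $\sigma^{d-1}\rho B$ contribution drops out entirely along $\Sigma$: it carries $d-1\geq 2$ factors of $\sigma$ and is hit by only two $\nabla_n$ derivatives. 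Consequently the left-hand side becomes $\tfrac12\nabla_n^3 I^2_\sigma + (d-3)\nabla_n^2\rho + 2\rho\nabla_n\rho$ along $\Sigma$, and the right-hand side becomes $-\nabla_n^2(\gamma^{ab}\nabla_a n_b) - 2\nabla_n(\J+2\rho^2) + 2\rho(\J+2\rho^2)$.

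Equating the two and using $\nabla_n(\rho^2) = 2\rho\nabla_n\rho$ to consolidate $-2\rho\nabla_n\rho - 8\rho\nabla_n\rho = -\nabla_n(5\rho^2)$ then yields the claimed identity. There is no genuine obstacle: the whole argument is mechanical differentiation followed by restriction. The only bookkeeping point demanding care is to treat $n^2$ and $\nabla_n n^2$ as generic quantities throughout the Leibniz expansion and substitute their $\Sigma$-values only at the end, so that no $\sigma$-dependent piece is accidentally dropped too early, and so that the $d\geq 3$ hypothesis is invoked precisely where the $B$ remainder is discarded.
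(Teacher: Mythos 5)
Your proposal is correct and matches the paper's own method exactly: the paper proves Proposition~\ref{rho-engine} by applying $\nabla_n^{k-1}$ to Equation~\nn{stepk} together with the general Leibniz identity
$\nabla^{k-1}_n(\si f) = \si\nabla_n^{k-1}f + (k-1)n^2\nabla_n^{k-2}f + \cdots + (\nabla_n^{k-2}n^2)f$,
and the paper's remark that the proof of this Lemma is ``by now elementary'' is simply alluding to the $k=3$ instance of the same computation, which is precisely what you carry out. Your bookkeeping — using $\nabla_n^2(\sigma f)|_\Sigma = 2\nabla_n f - 2\rho f$, discarding the $\sigma^d\rho B$ remainder because $d\geq 3$, and consolidating $-2\rho\nabla_n\rho - 8\rho\nabla_n\rho = -\nabla_n(5\rho^2)$ — is correct and reproduces the displayed identity.
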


The proof of the above Lemma is, by now, elementary.
Of the terms on the right hand side of~\nn{threederivs}, only the 
the first has not been computed from previous steps in the recursion.
This is somewhat involved. Firstly, we need a lemma relating the 
ambient and hypersurface Laplacians.
\begin{lemma}\label{Laplaces}
Let~$f$ be a (smooth) extension of any function~$\bar f$ defined along~$\Sigma$. Then
$$
\bar\Delta\bar f\stackrel\Sigma=\big(\Delta-\nabla_n^2-(d-2) H\nabla_n\big) f\, .
$$
\end{lemma}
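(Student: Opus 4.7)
The plan is to decompose the ambient Laplacian $\Delta f = g^{ab}\nabla_a\nabla_b f$ along $\Sigma$ into a normal and a tangential contribution, by exploiting that the conformal unit defining density forces $n|_\Sigma = \hat n$ to be of unit length; this gives $g^{ab}|_\Sigma = \gamma^{ab} + \hat n^a\hat n^b$ and hence the pointwise splitting
\begin{equation*}
\Delta f|_\Sigma = \gamma^{ab}\nabla_a\nabla_b f|_\Sigma + \hat n^a\hat n^b\nabla_a\nabla_b f|_\Sigma.
\end{equation*}
The normal piece is straightforward: expanding $\nabla_n^2 f = (n^a\nabla_a n^b)\nabla_b f + n^a n^b\nabla_a\nabla_b f$ and invoking Equation~\eqref{normaln} of Lemma~\ref{HII}, which gives $n^a\nabla_a n^b|_\Sigma = Hn^b$, I would obtain the identity $\hat n^a\hat n^b\nabla_a\nabla_b f|_\Sigma = \nabla_n^2 f|_\Sigma - H\nabla_n f|_\Sigma$.

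For the tangential piece the fastest route is a short Gaussian normal coordinate calculation. Choosing coordinates $(x^i,r)$ near $\Sigma$ so that $g = dr^2 + g_{ij}(x,r)\,dx^i dx^j$, the unit normal is $\partial_r$ and $\II_{ij} = \tfrac12\partial_r g_{ij}|_{r=0}$, so $\partial_r\ln\sqrt{g}|_{r=0} = g^{ij}\II_{ij}|_{r=0} = (d-1)H$. A one-line expansion of $\Delta f = \tfrac{1}{\sqrt{g}}\partial_a(\sqrt{g}\,g^{ab}\partial_b f)$ evaluated at $r=0$ yields
\begin{equation*}
\Delta f|_\Sigma = \bar\Delta\bar f + \partial_r^2 f|_{r=0} + (d-1)H\,\partial_r f|_{r=0}.
\end{equation*}
Combining the two pieces, with the normal-piece identity used in reverse to trade $\partial_r^2 f|_{r=0} = \hat n^a\hat n^b\nabla_a\nabla_b f|_\Sigma$ for $\nabla_n^2 f|_\Sigma - H\nabla_n f|_\Sigma$, then produces the claimed formula; the mean-curvature coefficient collapses from $(d-1)H$ to $(d-2)H$ after the $-H\nabla_n f$ cancellation.

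The only delicate point is the reconciliation of the two normal-direction extensions: the geodesic unit extension $\partial_r$ used in the Gaussian coordinate calculation versus the gradient field $n = \nabla\sigma$ appearing in $\nabla_n$, which agree on $\Sigma$ but differ off it. Since only the value of $n$ on $\Sigma$ and its first covariant derivative there enter the computation, and that first jet is completely controlled by Equation~\eqref{normaln}, this step is routine bookkeeping rather than a substantive obstacle. No tractor machinery or Gauss--Codazzi relations are needed.
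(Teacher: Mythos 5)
Your proof is correct, but it takes a genuinely different route from the paper's. The paper proves this lemma by a double application of its Gau{\ss} formula~\eqref{hypgrad}: it identifies $\bar\Delta\bar f$ with $\gamma^{ab}\nabla_a^\top\nabla_b^\top f$ along $\Sigma$ (the $\II$-correction being killed by $\gamma^{ab}\hat n_b=0$), expands $\nabla^\top=\nabla-n\,\nabla_n$ in abstract indices, and then substitutes $\nabla^\top_a n^a\stackrel{\Sigma}{=}(d-1)H$ and $\nabla_n n_a\stackrel{\Sigma}{=}Hn_a$ from Lemma~\ref{HII}. You instead compute $\Delta f$ in Gaussian normal coordinates (where the normal is the geodesic extension $\partial_r$), read off $\Delta f|_\Sigma=\bar\Delta\bar f+\partial_r^2 f+(d-1)H\,\partial_r f$ from $\partial_r\ln\sqrt{g}|_{r=0}=(d-1)H$, and then trade the geodesic second normal derivative $\partial_r^2 f=\hat n^a\hat n^b\nabla_a\nabla_bf$ for $\nabla_n^2 f-H\nabla_nf$ using $\nabla_n n^b\stackrel{\Sigma}{=}Hn^b$. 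The reconciliation step you flag is precisely the nontrivial content: $\partial_r$ satisfies $\nabla_{\partial_r}\partial_r=0$ while $n=\nabla\sigma$ satisfies $\nabla_n n\stackrel{\Sigma}{=}Hn$, and your observation that only the first jet of $n$ on $\Sigma$ enters --- fully determined by Lemma~\ref{HII} --- closes that gap. What the paper's route buys is that it never leaves the abstract-index, coordinate-free framework used throughout Section~\ref{comp-ext} and reuses the same Gau{\ss} formula machinery that drives the rest of the calculus; what your route buys is a more elementary, coordinate-based derivation of the Laplacian decomposition that avoids invoking~\eqref{hypgrad} at all, at the modest cost of picking special coordinates and doing the $\partial_r\leftrightarrow n$ bookkeeping. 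Both are sound.
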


\begin{proof}
The proof is a simple (double) application of the formula~\eqref{hypgrad} relating 
ambient and hypersurface Levi-Civita connections
\begin{equation*}
\begin{split}
\bar \Delta\bar f&\stackrel\Sigma=
\gamma^{ab}(\nabla_a^\top\nabla_b^\top f + \hat n_b\II_a^c \nabla_c^\top f)\stackrel\Sigma=
(\nabla^a-n^a \nabla_n)(\nabla_a-n_a \nabla_n)f\\&
\stackrel\Sigma=\big(\Delta-\nabla_n^2+(\nabla_n n^a) \nabla_a - (\nabla_a^\top n^a) \nabla_n\big)f\, .
\end{split}
\end{equation*}
Finally,  note that~$\nabla_a^\top n^a\stackrel\Sigma=\II_a^a=(d-1) H$ and
$\nabla_n n_a = \frac12 \nabla_an^2 \stackrel\Sigma=\nabla_n(-\rho\sigma)\stackrel\Sigma=H$.
\end{proof}

This result allows us to compute a quantity required for handling the troublesome term $-\nabla_n^2 \gamma^{ab} \nabla_a n_b$ in Equation~\nn{threederivs}. 
\begin{lemma}\label{leadinglaplace}
Let $\sigma$ be a conformal unit defining density,  and~$d>2$, then
$$\gamma^{ab}\nabla_a\nabla_n^2 n_b\stackrel\Sigma=\bar\Delta H  -2(d-1)H\nabla_n \rho+ (d-1)H^3\, .$$
\end{lemma}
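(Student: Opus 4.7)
\medskip

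\noindent\textbf{Proof proposal for Lemma~\ref{leadinglaplace}.}
The plan is to compute $\nabla_n^2 n_b$ modulo terms that die upon contraction with $\gamma^{ab}$ and restriction to $\Sigma$, then handle the resulting tangential second derivatives of $\rho$ using Lemma~\ref{Laplaces}. The key identity is $n^c\nabla_c n_b = \tfrac{1}{2}\nabla_b n^2$, which combined with the defining condition $n^2 = 1 - 2\rho\sigma + \sigma^d B$ (Equation~\nn{nid}) yields
$$
\nabla_n n_b = -\sigma\,\nabla_b\rho - \rho\, n_b + O(\sigma^{d-1}).
$$
Applying $\nabla_n$ once more and using $\nabla_n \sigma = n^2$, a short computation gives
$$
\nabla_n^2 n_b = -n^2\,\nabla_b\rho - \sigma\,\nabla_n\nabla_b\rho - (\nabla_n\rho)\, n_b + \rho\sigma\,\nabla_b\rho + \rho^2\, n_b + O(\sigma^{d-2}).
$$

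Next I would apply $\nabla_a$ to both sides and contract with $\gamma^{ab}$, restricting to $\Sigma$. Three simplifying observations do most of the work: (i) $\gamma^{ab} n_a \stackrel\Sigma= 0$, which annihilates any surviving undifferentiated $n_a$; (ii) $\sigma|_\Sigma = 0$, which kills the $\sigma\,\nabla_n\nabla_b\rho$ and $\rho\sigma\,\nabla_b\rho$ terms together with their derivatives along $\Sigma$ in the direction of $\gamma^{ab}$; and (iii) $\gamma^{ab}\nabla_a n_b \stackrel\Sigma= \gamma^{ab}(\II_{ab}+Hn_a n_b) = (d-1)H$ using Lemma~\ref{HII}. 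The remainder/$O(\sigma^{d-2})$ term also drops out: its derivative produces a factor of $\sigma^{d-3}$ times $n_a$ plus a piece of order $\sigma^{d-2}$, both of which vanish when contracted with $\gamma^{ab}$ and restricted to $\Sigma$ (valid for $d \geq 3$).

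Carrying out the surviving pieces one finds
$$
\gamma^{ab}\nabla_a \nabla_n^2 n_b \stackrel\Sigma= -\gamma^{ab}\nabla_a\nabla_b\rho - (d-1)H\,\nabla_n\rho + (d-1)H^3,
$$
so the remaining task is to evaluate $\gamma^{ab}\nabla_a\nabla_b\rho|_\Sigma$. Here I would invoke Lemma~\ref{Laplaces} with the extension $f=\rho$ of $\bar f = -H$: splitting $\Delta\rho = \gamma^{ab}\nabla_a\nabla_b\rho + n^a n^b\nabla_a\nabla_b\rho$ and using $\nabla_n n^b \stackrel\Sigma= H n^b$ (Equation~\nn{normaln}) gives $n^a n^b\nabla_a\nabla_b\rho \stackrel\Sigma= \nabla_n^2\rho - H\nabla_n\rho$, and comparing with Lemma~\ref{Laplaces} yields
$$
\gamma^{ab}\nabla_a\nabla_b\rho \stackrel\Sigma= -\bar\Delta H + (d-1)H\,\nabla_n\rho.
$$
Substituting this into the previous display produces the claimed identity. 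I expect no real obstacles here: the only place requiring care is bookkeeping of the $O(\sigma^{d-1})$ and $O(\sigma^{d-2})$ remainders, where one should verify that the hypothesis $d \geq 3$ is used in exactly the right way (for $d = 3$ the remainder becomes $O(\sigma)$, still killed by $\gamma^{ab}$ upon one derivative and restriction).
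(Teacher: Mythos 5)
Your proposal is correct and follows essentially the same route as the paper's proof: you start from $\nabla_n n_b=\tfrac12\nabla_b n^2$ together with the defining relation $n^2=1-2\rho\sigma+\sigma^d B$, differentiate once more and hit with $\gamma^{ab}\nabla_a$ along $\Sigma$, and then handle the surviving $\gamma^{ab}\nabla_a\nabla_b\rho$ term via Lemma~\ref{Laplaces}. The only (cosmetic) difference is that you first expand $\nabla_n^2 n_b$ explicitly and discard terms at the end, whereas the paper works directly from $\tfrac12\gamma^{ab}\nabla_a\nabla_n\nabla_b n^2$ and simplifies line by line; your accounting of the $\O(\sigma^{d-2})$ remainder and the low-dimensional case $d=3$ is accurate.
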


\begin{proof}
Again, we compute explicitly along~$\Sigma$ using the techniques developed in section~\ref{comp-ext} and, at the last step, the preceding lemma:
\begin{equation*}
\begin{split}
\gamma^{ab}\nabla_a \nabla_n^2 n_b&=
\frac12 \gamma^{ab}\nabla_a \nabla_n\nabla_b\big(1-2\rho\sigma+\O(\sigma^d)\big)\\
&=- \gamma^{ab}\nabla_a\nabla_n\big(\sigma\nabla_b \rho + \rho n_b\big)\\
&=- \gamma^{ab}\nabla_a \big(\nabla_b \rho (1-2\rho\sigma)+ \nabla_n \rho n_b + \rho \nabla_n n_b\big)\\
&=- \gamma^{ab}\nabla_a \nabla_b \rho-(d-1)H\nabla_n \rho- \gamma^{ab}\rho\nabla_a(-\sigma\nabla_b \rho  -\rho n_b)\\
&=\bar\Delta H  -2(d-1)H\nabla_n \rho+ (d-1)H^3\, .
\end{split}
\end{equation*}
\end{proof}

\begin{remark}
This result ensures that the leading term of the $d=3$ obstruction density 
coincides with the leading Laplacian term of the Willmore invariant~\nn{Wore}.
\end{remark}

To use Lemma~\ref{leadinglaplace}, we still need to commute the operators~$\nabla_n^2$ and~$\gamma^{ab}\nabla_a$. This calculation
is encoded in the following result.
\begin{lemma}\label{commutators}
Let $\sigma$ be a conformal unit defining density and~$d>2$, then
\begin{equation*}
\begin{split}
\nabla_n^2 \gamma^{ab} \nabla_a n_b 
-\gamma^{ab} \nabla_n^2 \nabla_a n_b&\stackrel\Sigma=12 H \nabla_n \rho -4H^3\, ,\\[2mm]
\gamma^{ab}\big( \nabla_n^2 \nabla_a n_b -\nabla_a \nabla_n^2 n_b\big)\ 
&\stackrel\Sigma= -(\nabla_n-H)\Ric(n,n)+2\IIo^{ab}\IIo_{ac}\IIo^c_b+3H\IIo^{ab}\IIo_{ab}\\
&\phantom{=}\  - \!(d-1)H^3-2\IIo^{ab}R_{cabd}\, n^c n^d\, .
\end{split}
\end{equation*}
\end{lemma}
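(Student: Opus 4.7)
The plan is to prove both identities of Lemma~\ref{commutators} by direct expansion of the commutators, evaluating the resulting terms along~$\Sigma$ using Lemma~\ref{HII} and the conformal unit defining density relation $n^2 = 1 - 2\rho\sigma + O(\sigma^d)$.

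For the first identity, since the Levi-Civita connection is metric compatible, only the $n^an^b$ part of $\gamma^{ab} = g^{ab} - n^an^b$ contributes to the commutator, so the Leibniz rule yields
\begin{equation*}
\nabla_n^2(\gamma^{ab}\nabla_a n_b) - \gamma^{ab}\nabla_n^2\nabla_a n_b = -\big(\nabla_n^2(n^an^b)\big)\nabla_a n_b - 2\big(\nabla_n(n^an^b)\big)\nabla_n\nabla_a n_b.
\end{equation*}
I would evaluate the factors $\nabla_n(n^an^b)$ and $\nabla_n^2(n^an^b)$ along~$\Sigma$ by iterating the transverse identity $\nabla_n n_a = \tfrac12\nabla_a n^2 = -\rho n_a - \sigma\nabla_a\rho + O(\sigma^{d-1})$, while the scalar contractions $n^an^b\nabla_a n_b$ and $n^an^b\nabla_n\nabla_a n_b$ follow from $\tfrac12\nabla_n n^2$ and $\tfrac12\nabla_n^2 n^2$ respectively. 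The values $\rho\big|_\Sigma = -H$ and $\nabla_n\rho\big|_\Sigma$ are supplied by Lemma~\ref{HII} and Lemma~\ref{lineWillmore}. After the cancellations forced by $n^a\IIo_{ab}=0$ and $n^a\bar\nabla_a H=0$, the surviving terms collapse to $12H\nabla_n\rho - 4H^3$.

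For the second identity, I would employ the operator expansion $[\nabla_n^2,\nabla_a] = [\nabla_n,\nabla_a]\nabla_n + \nabla_n[\nabla_n,\nabla_a]$ together with the basic commutator
\begin{equation*}
[\nabla_n,\nabla_a]\omega_b = n^c R_{cabd}\omega^d - (\nabla_a n^c)\nabla_c\omega_b,
\end{equation*}
derived from the Ricci identity of Section~\ref{Riemann}. Tracing with $\gamma^{ab}$ uses $n^an^cR_{cabd}=0$, hence $\gamma^{ab}n^cn^dR_{cabd}=\Ric(n,n)$, together with $\nabla_a n^c\stackrel\Sigma= \IIo_a{}^c + H\delta_a^c$ and $\nabla_c n_b\stackrel\Sigma= \IIo_{cb} + Hg_{cb}$ from Lemma~\ref{HII}. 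Substituting these into the two pieces $\gamma^{ab}[\nabla_n,\nabla_a]\nabla_n n_b$ and $\gamma^{ab}\nabla_n[\nabla_n,\nabla_a]n_b$, expanding $\nabla_n n_b = -\rho n_b - \sigma\nabla_b\rho + O(\sigma^{d-1})$ where required, and commuting the outer $\nabla_n$ through the $\gamma^{ab}$ trace (whose derivative $\nabla_n\gamma^{ab}\stackrel\Sigma= -2Hn^an^b$ generates the $H\Ric(n,n)$ correction), one obtains upon reorganization the Ricci contribution $-(\nabla_n - H)\Ric(n,n)$, the extrinsic Riemann piece $-2\IIo^{ab}R_{cabd}n^cn^d$, and the algebraic terms $2\IIo^{ab}\IIo_{ac}\IIo^c_b + 3H\IIo^{ab}\IIo_{ab} - (d-1)H^3$.

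The main obstacle is bookkeeping: each Leibniz step generates many terms that must be reorganized using the trace-free property of $\IIo$, the tangentiality of $\bar\nabla H$, the antisymmetries of the Riemann tensor, and $\gamma^{ab}g_{ab}=d-1$. Isolating the specific combination $(\nabla_n - H)\Ric(n,n)$ requires careful tracking of the $\nabla_n\gamma^{ab}$ contribution to the Riemann trace, while the cubic-$\IIo$ term emerges from the product of three $\nabla n$ factors surviving the $\gamma^{ab}$ contraction. Since $O(\sigma^{d-1})$ corrections annihilate upon restriction to~$\Sigma$, no higher jets of $\sigma$ beyond those supplied by Lemmas~\ref{HII} and~\ref{lineWillmore} are required.
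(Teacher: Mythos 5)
Your proposal follows essentially the same route as the paper: for the first identity, Leibniz-expand the commutator using the fact that only the $n^an^b$ part of $\gamma^{ab}$ has nonzero $\nabla_n$-derivative, and evaluate the resulting products of $\nabla_n^k n^b$ and $\nabla_a n_b$ along $\Sigma$ via Lemma~\ref{HII} and the defining-density relation $n^2 = 1 - 2\rho\sigma + O(\sigma^d)$; for the second identity, insert the Ricci-type commutator $[\nabla_n,\nabla_a]\omega_b = n^cR_{cabd}\omega^d - (\nabla_a n^c)\nabla_c\omega_b$ twice and evaluate along $\Sigma$. That said, two details of your narration are off and would need fixing in a full write-up: the trace satisfies $\gamma^{ab}R_{cabd}n^cn^d = -\Ric(n,n)$ (not $+\Ric(n,n)$), and the $+2H\Ric(n,n)$ that promotes $-(\nabla_n + H)\Ric(n,n)$ to $-(\nabla_n - H)\Ric(n,n)$ actually arises from trading $\II^{ab}$ for $\IIo^{ab}$ in the term $-2\II^{ab}R_{cabd}n^cn^d$ (using $\II_{ab}=\IIo_{ab}+H\bar g_{ab}$), not from $\nabla_n\gamma^{ab}$ — the correction $\nabla_n\gamma^{ab}\stackrel\Sigma=-2Hn^an^b$ hits $R_{cabd}n^cn^d$ and vanishes by the antisymmetry $n^an^cR_{cabd}=0$.
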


\begin{proof}
Again, both these results can be obtained computing along~$\Sigma$
using the techniques developed in section~\ref{comp-ext}:
\begin{equation*}
\begin{split}
[\nabla_n^2, \gamma^{ab}] \nabla_a n_b&=  
-2 n^a (\nabla_n^2  n^b) \nabla_a n_b -2 (\nabla_n n^a) (\nabla_n n^b) \nabla_a n_b  -4n^a (\nabla_n n^b) \nabla_n \nabla_a n_b\\[1mm]
&=-\, 2 n^a (-\nabla^b\rho - n^b\nabla_n\rho+ H^2 n^b)(\II_{ab}+H n_a n_b)-2 H^2 n^a n^b  (\II_{ab}+H n_a n_b)\\
&\phantom{=\ }-4 n^a H n^b (\nabla_a \nabla_n n_b + R_{cabd}\, n^c n^d-\nabla_a n^c \nabla_c n_b)\\[1mm]
&=2H(2\nabla_n\rho  - H^2)-2H^3-4Hn^a \nabla_n^2 n_a +4 H^3\\[1mm]
&=12 H \nabla_n \rho -4H^3\, ,
\end{split}
\end{equation*}
and
\begin{equation*}
\begin{split}
\gamma^{ab}\big( &\nabla_n^2 \nabla_a n_b -\nabla_a \nabla_n^2 n_b\big)
= \gamma^{ab}\big([\nabla_n,\nabla_a]\nabla_n n_b + \nabla_n\big(R_{cabd}\, n^c n^d-(\nabla_a n_c) \nabla^c n_b\big)\big)\\[2mm]
&=\gamma^{ab} \big(R_{dabc}n^d\nabla_n n^c -(\nabla_a n_c) \nabla^c \nabla_n n_b\big)\\
&\phantom{=\ }-\nabla_n\Ric(n,n)+2n^a (\nabla_n n^b) R_{cabd}\, n^c n^d
-2\gamma^{ab} (\nabla_a n_c) \nabla_n\nabla_b n^c\\[2mm]
&=-H\Ric(n,n)-3\II^{ab} \nabla_a \nabla_n n_b-\nabla_n\Ric(n,n)
-2\II^{ab}\big(R_{cabd}\, n^c n^d-\nabla_a n_c \nabla^c n_b\big)\\[2mm]
&=-(\nabla_n+H)\Ric(n,n)-3H\II^{ab}\II_{ab}-2\II^{ab}R_{cabd}\, n^c n^d+2\II^{ab}\II_{ab}\II^c_b\\[2mm]
&=-(\nabla_n-H)\Ric(n,n)+2\IIo^{ab}\IIo_{ac}\IIo^c_b+3H\IIo^{ab}\IIo_{ab}
-(d-1)H^3-2\IIo^{ab}R_{cabd}\, n^c n^d
\, .
\end{split}
\end{equation*}
\end{proof}

Orchestrating Lemmas~\ref{threederivs},~\ref{leadinglaplace} and~\ref{commutators}
plus the results  of section~\ref{comp-ext} for the previous steps of the recursion involving~$\rho$ and~$\nabla_n\rho$ along~$\Sigma$, gives immediately our first formula for the~$d=3$ obstruction density and~$\nabla_n^2 \rho$.

\begin{lemma}\label{rhonnn}
Let $\sigma$ be a conformal unit defining density, then if~$d>2$, 
\begin{equation*}
\begin{split}
\frac12 \, \nabla_n^3 I^2_{ \sigma}&+(d-3)\nabla_n^2\rho=
-\bar\Delta H  -H\IIo^{ab}\IIo_{ab} -2\IIo^{ab}\IIo_{ac}\IIo^c_b
\\&+\nabla_n G(n,n)+(d-3) (\nabla_n+2H)\J
+2\IIo^{ab}R_{cabd}\, n^c n^d+H\Ric(n,n)\, .
\end{split}
\end{equation*}
\end{lemma}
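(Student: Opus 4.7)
The plan is to assemble the identity by combining the preceding recursion formula \nn{threederivs} with the two auxiliary lemmas that control the tricky term $-\nabla_n^2(\gamma^{ab}\nabla_a n_b)$, and then to convert everything into the hypersurface quantities appearing in the target formula using the values of $\rho$ and $\nabla_n \rho$ along $\Sigma$ already established in Lemma~\ref{HII} and Lemma~\ref{lineWillmore}.

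Concretely, first I would take Equation~\nn{threederivs} as the starting point and rewrite
\[
-\nabla_n^2\bigl(\gamma^{ab}\nabla_a n_b\bigr)
= -\gamma^{ab}\nabla_a \nabla_n^2 n_b + \bigl(\gamma^{ab}\nabla_a\nabla_n^2 n_b - \nabla_n^2 \gamma^{ab}\nabla_a n_b\bigr),
\]
splitting the commutator through the intermediate operator $\gamma^{ab}\nabla_n^2\nabla_a n_b$. The two increments on the right are precisely the contents of the two parts of Lemma~\ref{commutators}, and the remaining term $\gamma^{ab}\nabla_a\nabla_n^2 n_b$ along $\Sigma$ is supplied by Lemma~\ref{leadinglaplace}. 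Substituting the three expressions produces a formula for $\frac12\nabla_n^3 I^2_\sigma + (d-3)\nabla_n^2 \rho$ in terms of $\bar\Delta H$, $H\nabla_n\rho$, $H^3$, $(\nabla_n - H)\Ric(n,n)$, $\IIo\IIo\IIo$, $H\IIo\IIo$, $\IIo^{ab}R_{cabd}n^c n^d$, together with the contributions $-\nabla_n(5\rho^2 + 2\J)$ and $4\rho^3 + 2\rho\J$ from~\nn{threederivs}.

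Next I would restrict to $\Sigma$ and use $\rho\stackrel\Sigma=-H$ (Lemma~\ref{HII}) to convert every explicit power of $\rho$ into a power of $H$; in particular $-\nabla_n(5\rho^2+2\J)\stackrel\Sigma=10 H\nabla_n\rho - 2\nabla_n\J$ and $4\rho^3+2\rho\J\stackrel\Sigma = -4H^3 - 2H\J$. The $H^3$ terms will then cancel after collecting, and the net $(2d-4)H\nabla_n\rho$ coefficient can be expanded via $\nabla_n\rho\stackrel\Sigma=\frac{K}{d-2}+\Rho(n,n)$ (Lemma~\ref{lineWillmore}) into a multiple of $HK$ plus $(d-2)H\Rho(n,n)$-terms. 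The Schouten-Ricci identity $\Ric(n,n)=(d-2)\Rho(n,n)+\J$ then trades $(d-2)H\Rho(n,n)$ for $H\Ric(n,n)-H\J$.

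The last step, which is the main bookkeeping obstacle rather than a conceptual one, is to recognise the combination of $\nabla_n\Ric(n,n)$ and $\J$-terms that survives as $\nabla_n G(n,n)+(d-3)(\nabla_n+2H)\J$. Using $\Sc = 2(d-1)\J$ and $G_{ab}=\Ric_{ab}-\tfrac12\Sc\, g_{ab}$, a direct calculation gives
\[
\nabla_n G(n,n)\stackrel\Sigma= \nabla_n\Ric(n,n) - (d-1)\nabla_n\J - 2(d-1)H\J,
\]
since along $\Sigma$ we have $n^2=1$ and $\nabla_n n^2\stackrel\Sigma=-2\rho=2H$. Matching the residual $-2\nabla_n\J - 4H\J$ appearing after the preceding substitutions against $\nabla_n G(n,n) + (d-3)\nabla_n\J + 2(d-3)H\J - H\Ric(n,n)$ is then an algebraic identity, and collecting the remaining curvature and $\IIo$ terms yields the stated formula.
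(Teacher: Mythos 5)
Your proof is correct and takes essentially the same route as the paper, which proves this lemma by "orchestrating" Lemmas~\ref{threederivs}, \ref{leadinglaplace} and \ref{commutators} together with the earlier formulas for $\rho|_\Sigma$ and $\nabla_n\rho|_\Sigma$; you have simply carried out that orchestration explicitly. The intermediate checks all go through: the $H^3$ terms cancel, the net coefficient of $H\nabla_n\rho$ is $(2d-4)$ which, via Lemma~\ref{lineWillmore} and $\Ric(n,n)=(d-2)\Rho(n,n)+\J$, supplies exactly the combination $2H\IIo^{ab}\IIo_{ab}+2H\Ric(n,n)-2H\J$ needed, and your formula $\nabla_n G(n,n)\stackrel\Sigma=\nabla_n\Ric(n,n)-(d-1)\nabla_n\J-2(d-1)H\J$ matches what the chain rule with $n^2\stackrel\Sigma=1$, $\nabla_n n^2\stackrel\Sigma=2H$ and $\Sc=2(d-1)\J$ gives.
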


To complete the proof of Proposition~\ref{W3} we need to (i) express the ambient
Riemann tensor in terms of its Weyl and Schouten tensor constituents, (ii) trade the Laplacian of mean curvature for the  second fundamental form divergence using the hypersurface identity~\nn{boxH} and (iii) rewrite the normal derivative of the normal components of the 
ambient Einstein tensor~$\nabla_n G(n,n)$ in terms of hypersurface quantities.
Only step (iii) is non-trivial, it relies on one more Lemma.
\begin{lemma}\label{Einstein}
$$
\nabla_n G(n,n)\stackrel\Sigma=
-\nablab^a(\Ric_{ab}\hat n^b)^\top+\IIo^{ab} \Ric_{ab}-(d-2) H \Ric(n,n)\, .
$$
\end{lemma}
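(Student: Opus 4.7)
The plan is to trade the normal derivative of $G(n,n)$ for a tangential divergence by exploiting the contracted second Bianchi identity $\nabla^{a}G_{ab}=0$. First I would expand
$$
\nabla_n G(n,n)=n^{c}n^{a}n^{b}\nabla_c G_{ab}+2G_{ab}(\nabla_n n^{a})n^{b}.
$$
Along $\Sigma$, Lemma~\ref{HII} gives $\nabla_n n^{a}\stackrel\Sigma= H n^{a}$, so the second piece is $2HG(n,n)$. For the first piece I would decompose $g^{cd}=\gamma^{cd}+n^{c}n^{d}$ (valid along $\Sigma$) inside $n^{b}\nabla^{c}G_{cb}=0$ to obtain
$$
n^{a}n^{b}n^{c}\nabla_c G_{ab}\stackrel\Sigma=-n^{b}\gamma^{cd}\nabla_d G_{cb}=-n^{b}\gamma^{cd}\nabla_d \Ric_{cb},
$$
the last equality because $n_{c}\gamma^{cd}=0$ kills the $\tfrac12 g_{cb}\Sc$ part of $G_{cb}$.

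The next step is a Leibniz-and-Gauss maneuver. Writing $\gamma^{cd}n^{b}\nabla_d\Ric_{cb}=\gamma^{cd}\nabla_d(\Ric_{cb}n^{b})-\gamma^{cd}\Ric_{cb}\nabla_d n^{b}$, Equation~\nn{IIc} gives $\gamma^{cd}\nabla_d n^{b}\stackrel\Sigma=\II^{cb}$, hence
$$
\gamma^{cd}\Ric_{cb}\nabla_d n^{b}\stackrel\Sigma=\II^{cb}\Ric_{cb}=\IIo^{cb}\Ric_{cb}+H\big(\Sc-\Ric(\hat n,\hat n)\big),
$$
using $\II=\IIo+H\bar g$ and $\gamma^{cb}\Ric_{cb}=\Sc-\Ric(\hat n,\hat n)$. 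For the remaining piece I would split the ambient one-form $\Phi_c:=\Ric_{cb}n^{b}$ into its parts along $\Sigma$ as $\Phi_c=(\Ric_{cb}\hat n^{b})^{\top}+n_c\Ric(n,n)$. The tangential summand, together with the Gauss formula for one-forms (the transpose of \nn{hypgrad}), yields $\gamma^{cd}\nabla_d(\Ric_{cb}\hat n^{b})^{\top}\big|_{\Sigma}=\bar\nabla^{c}(\Ric_{cb}\hat n^{b})^{\top}$, while the normal summand contributes $(\gamma^{cd}\nabla_d n_c)\Ric(n,n)=(d-1)H\Ric(\hat n,\hat n)$.

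Assembling these ingredients,
$$
\nabla_n G(n,n)\stackrel\Sigma=-\bar\nabla^{c}(\Ric_{cb}\hat n^{b})^{\top}-(d-1)H\Ric(\hat n,\hat n)+\IIo^{cb}\Ric_{cb}+H\big(\Sc-\Ric(\hat n,\hat n)\big)+2HG(n,n).
$$
Since $G(n,n)\stackrel\Sigma=\Ric(\hat n,\hat n)-\tfrac12\Sc$, the two $H\Sc$ contributions cancel and the $\Ric(\hat n,\hat n)$ coefficients combine to $-(d-2)H$, yielding the claimed formula.

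The only genuinely delicate point is the Gauss-formula identification $\gamma^{cd}\nabla_d(\Ric_{cb}\hat n^{b})^{\top}\big|_{\Sigma}=\bar\nabla^{c}(\Ric_{cb}\hat n^{b})^{\top}$, which requires choosing an ambient extension of the tangential one-form and verifying that the discrepancy from the extrinsic curvature terms vanishes upon tracing with $\bar g^{-1}$; everything else is a routine bookkeeping of tangential and normal components.
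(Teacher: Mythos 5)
Your proof is correct and reaches the right result, but you travel the identity in the opposite direction from the paper and organize the use of Bianchi differently, which is worth noting. The paper's proof starts from the intrinsic divergence term $\nablab^a\big((\Ric_{ab}\hat n^b)^\top\big)$, unfolds it via Leibniz and the Gau\ss\ formula, and then invokes the contracted second (differential) Bianchi identity on the resulting ambient curvature derivatives to make $\nabla_n G(n,n)$ appear; the stated identity is obtained by solving for $\nabla_n G(n,n)$. You instead start from $\nabla_n G(n,n)$, pull out the connection terms coming from $\nabla_n n^a\stackrel\Sigma=Hn^a$, and then feed the remaining normal--normal--normal derivative into the contracted Bianchi identity in its most familiar form $\nabla^a G_{ab}=0$, decomposed into tangential and normal parts. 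This is the mirror image of the paper's argument: the same underlying identity is used, but your version packages it as the conservation law for the Einstein tensor (a more memorable and conceptually transparent starting point), whereas the paper works with the uncontracted curvature form $\nabla_d R_{cab}{}^c+\nabla_c R_{adb}{}^c$ directly. Each route is about the same length; yours has the pedagogical advantage that the appearance of $G$ rather than $\Ric$ in the lemma is explained at the outset rather than emerging at the end.

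On the point you flag as delicate, you can in fact stop worrying: for a \emph{trace} of the Gau\ss\ formula applied to a tangential one-form there is no residual extrinsic curvature to account for. Writing the covector Gau\ss\ formula as $\bar\nabla_a\bar\omega_b = \nabla_a^\top\omega_b + \hat n_b\, \II_a{}^c\omega_c$ along $\Sigma$ (the transpose of~\nn{hypgrad}), contraction with $\bar g^{ab}=\gamma^{ab}$ kills the $\II$-term outright since it carries a factor of $\hat n_b$, and leaves exactly $\gamma^{ab}\nabla_a\omega_b$. Extension-independence follows from $\gamma^{ab}n_b\stackrel\Sigma=0$ as well. So the identification $\gamma^{cd}\nabla_d\big((\Ric_{cb}\hat n^b)^\top\big)\big|_\Sigma=\bar\nabla^{c}(\Ric_{cb}\hat n^b)^\top$ is immediate, and the rest of your bookkeeping (in particular the cancellation of the $H\Sc$ pieces and the collapse of the $\Ric(n,n)$ coefficients to $-(d-2)$) checks out.
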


\begin{proof}
This computation relies on the algebraic Bianchi identity for the ambient Riemann tensor:
\begin{equation*}
\begin{split}
\nablab^a&\big((\Ric_{ab} n^b)^\top\big)
=\gamma^{ab}\nabla_a\big(R_{dcb}{}^c\hat n^d-n_bR_{dce}{}^cn^dn^e\big)\\
&=\II^{ab}\Ric(a,b)-\gamma^{ab}n^d \big(\nabla_d R_{cab}{}^{c}+\nabla_c R_{adb}{}^{c}\big)-(d-1)H\Ric(n,n)\\
&=\II^{ab}\Ric(a,b)+\frac12\nabla_n R-\nabla_n \Ric(n,n)+\nabla_n(n^an^b) \Ric_{ab}
-(d-1)H\Ric(n,n)\\
&=-\nabla_n G(n,n) +\IIo^{ab} \Ric_{ab}-(d-2) H \Ric(n,n)\, . 
\end{split}
\end{equation*}
\end{proof}

\newcommand{\msn}[2]{\href{http://www.ams.org/mathscinet-getitem?mr=#1}{#2}}
\newcommand{\hepth}[1]{\href{http://arxiv.org/abs/hep-th/#1}{arXiv:hep-th/#1}}
\newcommand{\maths}[1]{\href{http://arxiv.org/abs/math/#1}{arXiv:math/#1}}
\newcommand{\mathph}[1]{\href{http://lanl.arxiv.org/abs/math-ph/#1}{arXiv:math-ph/#1}}
\newcommand{\arxiv}[1]{\href{http://lanl.arxiv.org/abs/#1}{arXiv:#1}}

\end{document}